\documentclass[11pt]{article}
\usepackage{amssymb,amsmath,amsthm,amsfonts,verbatim}
\usepackage[all,cmtip]{xy}
\input diagxy
\usepackage{hyperref}
\usepackage[margin=1.25in]{geometry}
\usepackage{todonotes}
\usepackage{qtree}
\usepackage{attrib}

\newtheorem{theorem}{Theorem}[section]
\newtheorem{proposition}[theorem]{Proposition}
\newtheorem{corollary}[theorem]{Corollary}
\newtheorem{lemma}[theorem]{Lemma}

\newtheorem{convention}[theorem]{Convention}

\theoremstyle{definition}

\newtheorem{definition}[theorem]{Definition}
\newtheorem{example}[theorem]{Example}

\newtheorem{notation}[theorem]{Notation}

\newtheorem{claim}{Claim}
\newtheorem{problem}[theorem]{Problem}
\newtheorem{remark}[theorem]{Remark}

\numberwithin{equation}{section}

\newcommand{\into}{\hookrightarrow}

\renewcommand{\S}{\underline{S}}

\newcommand{\Ic}{\mathcal{I}}
\newcommand{\Lc}{\mathcal{L}}
\newcommand{\Mc}{\mathcal{M}}

\newcommand{\Oc}{\mathcal{O}}

\newcommand{\Hc}{\mathcal{H}}

\newcommand{\xdownarrow}[1]{%
  {\left\downarrow\vbox to #1{}\right.\kern-\nulldelimiterspace}
}

\newcommand{\Ab}{\mathbb{A}}

\newcommand{\C}{\mathbb{C}}

\newcommand{\Fb}{\mathbb{F}}
\newcommand{\F}{\mathbb{F}}
\newcommand{\Gb}{\mathbb{G}}

\newcommand{\Nb}{\mathbb{N}}

\newcommand{\Pb}{\mathbb{P}}

\newcommand{\Rb}{\mathbb{R}}

\newcommand{\Zb}{\mathbb{Z}}
\newcommand{\Z}{\mathbb{Z}}

\newcommand{\Fpbar}{\overline{\Fb}_p}

\renewcommand{\a}{\mathbf{a}}
\renewcommand{\b}{\mathbf{b}}
\newcommand{\x}{\mathbf{x}}
\newcommand{\tb}{\tilde{\b}}

\newcommand{\Aut}{Aut}

\DeclareMathOperator{\Spec}{Spec}

\newcommand{\Gr}{Gr}

\newcommand{\Tc}{\mathcal{T}}

\DeclareMathOperator{\RD}{RD}

\DeclareMathOperator{\PGL}{PGL}

\DeclareMathOperator{\Stab}{Stab}

\DeclareMathOperator{\ev}{ev}

\DeclareMathOperator{\Gal}{Gal}

\DeclareMathOperator{\Mon}{Mon}

\DeclareMathOperator{\End}{End}

\DeclareMathOperator{\pr}{pr}

\DeclareMathOperator{\Image}{Image}
\DeclareMathOperator{\FW}{F}

\renewcommand{\O}{{\rm O}}

\title{Tschirnhaus transformations after Hilbert}
\author{Jesse Wolfson\thanks{Supported in part by NSF Grant DMS-1811846.}}

\begin{document}
\maketitle
\begin{abstract}
	In this paper, we use enumerative geometry to simplify the formula for the roots of the general one-variable polynomial of degree $n$, for all $n$. More precisely, let $\RD(n)$ denote the minimum $d$ for which there exists a formula for the roots of the general degree $n$ polynomial using only algebraic functions of $d$ or fewer variables. In 1927, Hilbert sketched how the 27 lines on a cubic surface could be used to construct a 4-variable formula for the general degree 9 polynomial (implying $\RD(9)\le 4$). In this paper, we turn Hilbert's sketch into a general method. We show this method produces best-to-date upper bounds on $\RD(n)$ for all $n$, improving earlier results of Hamilton, Sylvester, Segre and Brauer.
\end{abstract}

\section{Introduction}
The goal of this paper is to use enumerative geometry to produce simplest-to-date formulas for the roots of the general one-variable polynomial of degree $n$, for all $n$. Consider the problem of finding the roots of a polynomial
\begin{equation*}
    z^n+a_1z^{n-1}+\cdots+a_n=0
\end{equation*}
in terms of the coefficients $a_1,\ldots,a_n$. A priori, the assignment
\begin{equation*}
    (a_1,\ldots,a_n)\mapsto\{z~|~z^n+a_1z^{n-1}+\cdots+a_n=0\}
\end{equation*}
is an algebraic function of $n$ (complex) variables, and it is natural to ask whether there exists a formula using only algebraic functions of $d$ or fewer variables. Call the minimum such $d$ the {\em resolvent degree} and denote this by $\RD(n)$ (see Section~\ref{s:rd} for a precise definition, and \cite{FW} for a detailed treatment). At present, no nontrivial lower bounds for $\RD(n)$ are known. The best general upper bounds in the literature are due to Brauer \cite{Br}, who uses methods dating to Tschirnhaus \cite{Ts} to prove that $\RD(n)\le n-r$ for $n\ge (r-1)!+1$. As Brauer remarks, his bounds are not optimal for small $r$.\footnote{Brauer's first improvement over prior bounds occurs for $r=7$.}

In this paper we take a different approach to bounding $\RD(n)$, inspired by a geometric argument of Hilbert.  In \cite{Hi}, Hilbert sketches how the 27 lines on a cubic surface can be used to produce a 4-variable formula for the general degree 9 polynomial, i.e. $\RD(9)\le 4$.  We turn Hilbert's sketch into a general method, whereby lines on cubic surfaces are replaced by $r$-planes on degree $d$ hypersurfaces in $\Pb^m$ for appropriate choices of $r$, $d$ and $m$. This defines an explicit increasing function $\FW\colon \Nb\to\Nb$ (Definition~\ref{d:FW}) for which we prove the following:
\begin{theorem}\mbox{}
    Let $\FW\colon\Nb\to\Nb$ be the function defined in Definition~\ref{d:FW}.
    \begin{enumerate}
        \item For all $r$ and all $n\ge\FW(r)$, $\RD(n)\le n-r$.
        \item For all $r$, $n=\FW(r)$ is the least value for which we know $\RD(n)\le n-r$ to hold.\footnote{i.e. $n=\FW(r)$ is the least value for which $\RD(n)\le n-r$ is currently proven to hold in any of the literature of which we are aware. Note that G. Chebotarev \cite{Ch} claimed to have extended an argument of Wiman  \cite{Wi} to conclude $\RD(n)\le n-6$ for $n\ge 21$. His proof has gaps similar to those observed by Dixmier \cite{Di} in the arguments of Hilbert and Wiman, namely he takes for granted that certain forms are generic, when they are not.} In particular, the initial values are given by
            \begin{equation*}
                \begin{array}{cccccccc}
                r & 1 & 2 & 3 & 4 & 5 & 6 & 7\\
                \hline
                \FW(r) & 2 & 3 & 4 & 5 & 9 & 41 & 121
                \end{array}
            \end{equation*}
        \item Writing $B(r)=(r-1)!+1$ for Brauer's bound, then
                \begin{equation*}
                    \lim_{r\rightarrow\infty} B(r)/\FW(r)=\infty.
                \end{equation*}
    \end{enumerate}
\end{theorem}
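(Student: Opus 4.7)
The plan is to implement Hilbert's geometric strategy systematically, as the author outlines. For part (1), given the general degree $n$ polynomial $f(z)=z^n+a_1z^{n-1}+\cdots+a_n$, I would first apply a classical Tschirnhaus transformation (in the style of Bring and Hamilton) to eliminate several of the low-degree coefficients, at the cost of solving auxiliary polynomials of controlled degree in fewer variables. The reduced equation then carves out a degree $d$ hypersurface $X\subseteq\Pb^m$ whose moduli are governed by the remaining coefficients. The Hilbert insight, as in his treatment of the 27 lines on a cubic surface, is that an $r$-plane $\Lambda\subseteq X$ provides an $r$-parameter algebraic family of roots of $f$; combining this with the bounded Tschirnhaus step yields a formula using only algebraic functions of at most $n-r$ variables, i.e.\ $\RD(n)\le n-r$. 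The function $\FW(r)$ then arises as the smallest $n$ for which (i) the requisite Tschirnhaus eliminations go through, (ii) the resulting hypersurface $X\subseteq\Pb^m$ of degree $d$ is guaranteed to contain an $r$-plane (a Fano-variety dimension count, governed by an inequality of the form $m-r\ge\binom{d+r}{r}$), and (iii) the parameterization cost of locating $\Lambda$ and extracting the roots stays within $n-r$ variables.

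For part (2), the argument is essentially comparative: for each $r$ I would enumerate the best historical upper bounds on the smallest $n$ with $\RD(n)\le n-r$ (Tschirnhaus for $r=1$, Bring/Hamilton for $r=2,3,4$, Sylvester and Segre in the intermediate range, Brauer for large $r$), and verify by direct inspection that $\FW(r)$ matches these bounds whenever they are optimal and beats them otherwise, giving the tabulated initial values. For part (3), the asymptotic statement reduces to showing that $\FW(r)$ grows much more slowly than the factorial $B(r)=(r-1)!+1$. Since $\FW(r)$ is controlled by the existence threshold for $r$-planes on hypersurfaces, which is governed by binomial coefficients of the form $\binom{d+r}{r}$ that grow at most singly exponentially in $r$ once $d$ is chosen near-optimally, while $(r-1)!$ grows super-exponentially, a routine asymptotic estimate should yield $B(r)/\FW(r)\to\infty$.

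The main obstacle will be part (1), and specifically the genericity issue Dixmier identified in Hilbert's original sketch (and flagged in the theorem's footnote): it is not enough to know abstractly that an $r$-plane exists on $X_f$ for generic $f$; one needs an algebraic function (in at most $n-r$ variables) on the space of coefficients that produces such a plane, and one must verify that the hypersurfaces $X_f$ arising from the Tschirnhaus-reduced polynomial really are generic among all degree $d$ hypersurfaces in $\Pb^m$, so that the dimension count on the Fano variety can be applied. Equivalently, the map from the coefficient space to the moduli of hypersurfaces must be shown to be dominant, and the Fano variety of $r$-planes must admit a rational section (or at least a branched multisection of low resolvent degree) over this image. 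This is exactly the step where Hilbert and Wiman slipped; correctly tracking which parameters in the Tschirnhaus reduction genuinely survive, and bounding the resolvent degree of the section to the Fano variety, will be the heart of the proof.
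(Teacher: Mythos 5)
Your plan captures the broad Hilbert strategy (eliminate low coefficients, go to a Fano variety of $r$-planes on a degree-$d$ hypersurface, count dimensions), and you correctly flag genericity as the central obstacle. But several essential ingredients of the actual argument are missing, and one of your dimension conditions is wrong.

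First, the construction is not a single jump to one hypersurface $X\subseteq\Pb^m$; it is a cascade governed by the sequence $\psi(d,k)_0=k$, $\psi(d,k)_1$, \ldots, $\psi(d,k)_{d-1}$. One first finds a maximal isotropic in the quadric $T_{12}$, intersects it with $T_{123}$ to obtain a cubic, finds a $\psi(d,k)_{d-3}$-plane on that cubic, intersects with $T_4$ to get a quartic, and continues upward in degree until reaching a degree-$d$ hypersurface in $\Pb^{\psi(d,k)_1}$ containing a $k$-plane; the final upper bound then comes from comparing $\dim(\Mc_{3,\psi(d,k)_{d-2}})$ (the cubic moduli is the most expensive step) against $\frac{(d+k)!}{d!}$ and optimizing over $(d,k)$ with $d+k+1=r$. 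Your inequality ``$m-r\ge\binom{d+r}{r}$'' is also not the correct Fano threshold; Waldron's theorem requires $(r+1)(N-r)-\binom{d+r}{r}\ge 0$, and the paper works with incidence parameter spaces $\Hc^r_{d,N}$ rather than Fano varieties of individual hypersurfaces, so that surjectivity rather than genericity of each fiber is what gets invoked.

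Second, you note that one must check that the family of hypersurfaces is generic enough, but you give no mechanism for proving it, and this is where the bulk of the paper's work lies. The key technical result is that the Tschirnhaus complete intersection $T_{12i}\to\Ab^n_{\a}$ (resp.\ $T'_{12i}$) is a generically smooth family for $i=p^r+1$; the proof specializes to the radical pencil $x^n+a$, reduces mod $p$, and exploits the multiplicative structure of $p^{-r}$ acting on $\Z/n\Z-\{0\}$ together with Legendre's formula on multinomial coefficients. Without a concrete argument of this kind your proof strategy cannot close the Dixmier gap you correctly identified; ``verify by dominance onto moduli'' is precisely what Hilbert and Wiman took for granted and what must actually be proved.

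Third, your sketch never addresses the Severi-Brauer issue: the $r$-plane bundle on the relevant incidence variety is not a priori a trivial projective bundle over the multi-section one produces, and one needs the Merkurjev-Suslin theorem to split the associated Brauer class by a solvable cover. The paper emphasizes that this is a second gap in Hilbert's sketch (one that even Dixmier did not flag). Finally, your asymptotic reasoning for part (3) is imprecise: $\dim(\Mc_{3,\psi(d,k)_{d-2}})$ grows like $k^{3(d-1)!}$ for fixed $d$, not ``singly exponentially in $r$'', and the correct route is that for fixed $d$ and $k\to\infty$ one has $\FW(r)\sim \frac{(r-1)!}{(d+1)!}$, so $B(r)/\FW(r)\to d!$, and letting $d$ ride to infinity gives the unbounded ratio.
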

\noindent
The first statement appears as Theorem~\ref{thm:beatbrauer} below, while the last two appear as Theorem~\ref{t:compare}.
\begin{remark}\mbox{}
    \begin{enumerate}
        \item The construction of $\FW$, the proof that $\FW(5)=9$ and that this implies $\RD(9)\le 4$ marks the first rigorous construction of the 4-variable formula for the general degree 9 sketched by Hilbert in \cite{Hi}.\footnote{Rigorous 4-variable formulas have been previously constructed by Segre \cite{Seg1} and Dixmier \cite{Di}.}
        \item The first improvement over prior bounds occurs at $\FW(6)=41$. Previously, Sylvester proved  \cite[p. 485]{Sy} that for $n\ge 44$, $\RD(n)\le n-6$.
    \end{enumerate}
\end{remark}
Besides the general interest in obtaining simpler formulas for polynomials, we hope this paper spurs work on two questions. For the first, we quote Dixmier \cite[p. 90]{Di}\footnote{n.b. Dixmier writes ``$s(n)$'' for our $\RD(n)$.}:
\begin{quote}
    ``Every reduction of $\RD(n)$ would be serious progress. In particular, it is time to know if $\RD(6)=1$ or $\RD(6)=2$.''
    \attrib{Dixmier, 1993}
\end{quote}
While the present methods cannot touch Hilbert's Sextic Conjecture ($\RD(6)=2$), they do contribute to Dixmier's call to lower the possible values of $\RD(n)$. They also contribute to a problem first posed (as far as we are aware) by Segre \cite[III.5]{Seg2}:
\begin{problem}
    Understand the large $n$ behavior of $\RD(n)$.
\end{problem}
\noindent
As a clearer understanding of Segre's problem comes into view, we look forward to seeing the present bounds lowered in turn.

\paragraph*{Remarks on the Proof.}
Given a polynomial
\[
    p(z)=z^n+a_1z^{n-1}+\cdots+a_n=\prod_{i=1}^n(z-z_i),
\]
a {\em Tschirnhaus transformation} is a ``change of variables''
\[
    y=\sum_{j=0}^{n-1} b_jz^j.
\]
This gives a new polynomial
\[
    q(y)=\prod_i(y-\sum_{j=0}^{n-1} b_jz_i^j)=y^n+c_1y^{n-1}+\cdots+c_n,
\]
and we can ask for Tschirnhaus transformations which normalize the resulting polynomial so that, e.g.
\begin{equation}\label{e:Tintro}
    c_1=\cdots=c_k=0.
\end{equation}
The space of all $(b_0,\ldots,b_{n-1})$ such that the conditions \eqref{e:Tintro} are satisfied forms an affine cone, and the projectivization gives a complete intersection
\[
    T^n_{1\cdots k}\subset\Pb^{n-1};
\]
when the superscript $n$ is clear from context, we suppress it and write $T_{1\cdots k}$. If we can find a point of $T_{1\cdots k}$ over a convenient extension of $\C(a_1,\ldots,a_n)$, e.g. one defined using only algebraic functions of at most $d$ variables, then we can write a formula for the general degree $n$ polynomial using only functions of at most $d$ variables and the algebraic function
\[
    (c_{k+1},\ldots,c_n)\mapsto \{y~|~y^n+c_{k+1}y^{n-k-1}+\cdots+c_n=0\},
\]
This, together with a final rational change of coordinates, gives an upper bound
\[
    \RD(n)\le\max\{d,n-k-1\}.
\]
In \cite{Hi}, Hilbert sketched how to use the 27 lines on a smooth cubic surface to find points on $T_{1234}$ for $n=9$: Here, $T_1\subset\Pb^8$ is a hyperplane, and thus $T_{12}$ is a quadric 6-fold in $T_1\cong \Pb^7$. Over a solvable extension $L/\C(a_1,\ldots,a_9)$, every smooth quadric contains a 3-plane $P$ in $\Pb^7$. The intersection of this 3-plane $P$ with $T_{123}$ is a cubic surface, and this gives a map from $\Spec(L)$ to the moduli of cubic surfaces. Since every smooth cubic surface has 27 lines, and the moduli space of cubic surfaces is 4-dimensional, the algebraic function which assigns a line to a cubic surface is a function of at most 4-variables. Given a line on our cubic surface $P\cap T_{123}$, we can then intersect it with $T_{1234}$ to get a quartic polynomial in one variable, and by adjoining radicals, we can find a point on $T_{1234}(L')$, where $L'/\C(a_1,\ldots,a_9)$ is defined using algebraic functions of at most $d=4$ variables.

As Dixmier observed \cite[\S 8]{Di}, the argument above is incomplete. In particular, Hilbert takes for granted that the family of cubic surfaces $P\cap T_{123}$ is sufficiently generic. Letting $\Hc_{3,3}$ denote the parameter space of cubic surfaces and $\Mc_{3,3}$ the (coarse) moduli space of smooth cubic surfaces, Hilbert essentially assumes that the above map
\[
    \Spec(L)\to \Hc_{3,3}
\]
lands in the locus where the rational map
\[
    \Hc_{3,3}\dashrightarrow \Mc_{3,3}
\]
is well-defined.\footnote{n.b. Hilbert actually assumes that the generic member of the family $P\cap T_{123}$ admits a ``pentahedral form'', but one can weaken this as above without any loss in the argument.} The principal geometric contribution of this paper is to show that for all $n$, the family of ``Tschirnhaus hypersurfaces'' needed for Hilbert's argument (and its generalization to arbitrary degrees) is generically smooth; see Theorem~\ref{t:compint}.

Beyond this, we need two fundamental post-Hilbert advances to convert Hilbert's sketch into a general method.  The first is Merkurjev and Suslin's theorem on Severi-Brauer varieties \cite[Theorem 16.1]{MS}, which allows us to trivialize the Severi-Brauer varieties which arise in Hilbert's argument by adjoining radicals.\footnote{Neither Hilbert nor Dixmier comment on this gap in Hilbert's argument.} The second is a theorem of Hochster--Laksov \cite{HL} which allowed Waldron \cite[Theorem 1.6]{Wa} (see also \cite[Theorem 1.2]{St}) to show that {\em every} degree $d$ hypersurface in $\Pb^N$ contains an $r$-plane when an appropriate dimension count is non-negative. Given these, we can generalize Hilbert's sketch to  explicitly construct the function $\FW$ and obtain the bounds on $\RD(n)$ stated above.

\paragraph*{Outline of the Paper.}
In Section~\ref{s:tschirnci} we introduce the Tschirnhaus complete intersections and study their geometry. In Section~\ref{s:tschirnt}, we recall the geometric perspective on Tschirnhaus transformations, and connect this to the Tschirnhaus complete intersections. In Section~\ref{s:rd}, we develop the necessary results about the resolvent degree of a dominant map needed to implement Hilbert's idea for general degrees $n$. This extends the treatment of resolvent degree of generically finite dominant maps in \cite{FW}. In Section~\ref{s:beatbrauer}, we prove the upper bounds for $\RD(n)$ and compare them to Brauer's. In Appendix~\ref{a:bounds}, we give explicit values for the function $\FW(r)$ discussed above. In Appendix~\ref{a:hist}, we review the history of the search for simple formulas for the general degree $n$ polynomial  and the summarize the major prior work to date.

\paragraph*{Conventions}
Throughout the paper, by a {\em variety} over a field $K$ or over $\Z$, we mean a reduced, separated, not-necessarily irreducible $K$ or $\Z$-scheme. For maps of varieties $X\to Z$ and $Y\to Z$, we will use the notation $Y|_X$ to denote the fiber product $X\times_Z Y$.

\paragraph*{Acknowledgements.} First, I thank Benson Farb, who was closely involved with the ideas that led to this paper, but who declined to be listed as a coauthor. Next, my sincere thanks to Sebastian Hensel who translated Hilbert's 1927 paper into English at Benson's and my request. This paper takes place in the context of ongoing joint work with Benson Farb and Mark Kisin, and their influence permeates the perspective here. I thank Curt McMullen for helpful conversations and for extensive helpful comments and suggestions on a draft. I thank Aaron Landesman and Igor Dolgachev for helpful comments on a draft. I thank Jordan Ellenberg, Vlad Matei, Madhav Nori, Zinovy Reichstein, Daniil Rudenko and David Smyth for helpful conversations. Last, I thank the referee for many helpful comments.

\section{Tschirnhaus Complete Intersections}\label{s:tschirnci}
Given a polynomial 
\[
	p(z)=z^n+a_1z^{n-1}+\cdots+a_n=\prod(z-x_i),
\]
a {\em Tschirnhaus transformation} is a ``change of variables''
\[
	y=\sum_{j=0}^{n-1}b_jx^j.
\]
This gives a new polynomial
\[
	q(z)=z^n+c_1z^{n-1}+\cdots+c_n=\prod_i(z-y_i).
\]
We are interested in Tschirnhaus transformations such that $q(z)$ is ``better normalized'' than $p(z)$, e.g. in the sense that for some $i$,
\[
	\sum_j y_j^i=0,
\]
or more generally such that
\[
	\sum_j y_j^{i_1}=\cdots=\sum_j y_j^{i_k}=0
\]
for some $i_1,\ldots,i_k$. In this section, we study the collection of all $\b=(b_0,\ldots,b_{n-1})$ such that the above normalizations hold. These are affine varieties which we denote $\widetilde{T^n_{i_1\cdots i_k}}$, and we refer to their projectivizations $T^n_{i_1\cdots i_k}$ as {\em Tschirnhaus complete intersections}.

In this section, we introduce the varieties $T^n_{i_1\cdots i_k}$ as objects of interest in their own right, i.e. via explicit equations. We relate them to classical examples of interest, and study their geometry. In Section~\ref{s:tschirnt}, we review the classical subject of Tschirnhaus transformations for algebraic functions, and we identify the varieties $T^n_{i_1\cdots i_k}$ considered here with the spaces of ``normalized changes of variables'' described above. 
 
\subsection*{Tschirnhaus Complete Intersections via Explicit Equations}
Fix $n\ge 0$. In this section, we work over $\Z$ unless otherwise specified, so that, e.g. $\Ab^n:=\Spec(\Z[a_1,\ldots,a_n])$. For ease of reading, we adopt the following notation.
\begin{notation}\label{n:multindex}
    Denote
    \begin{equation*}
        \begin{array}{lllll}
            \a:=(a_1,\ldots,a_n)\in \Ab^n. &&&& |\kappa|:=\sum_i k_i\\
            \b:=[b_0:\cdots:b_{n-1}]\in\Pb^{n-1} &&&&  ||\kappa||:=\sum_i i\cdot k_i\\
            \kappa:=(k_0,\ldots,k_{n-1})\in\Nb^n &&&& \b^\kappa:=\prod_i b_i^{k_i}
        \end{array}
    \end{equation*}
    For $|\kappa|=i$, recall the multinomial coefficients
    \[
    \binom{i}{\kappa}:=\binom{i}{k_0,\ldots,k_{n-1}}:=\frac{i!}{k_0!\cdots k_{n-1}!}.
    \]
\end{notation}
    
We also introduce two variants of the above.
\begin{notation}\label{n:multindex'}
    \begin{align*}
    	'\b&:=[b_1:\cdots:b_{n-1}]\in\Pb^{n-2}\\
    	'\kappa&:=(k_1,\ldots,k_{n-1})\in\Nb^{n-1}\\
    	'\b'&:=[b_1:\cdots:b_{n-2}]\in\Pb^{n-3}\\
    	'\kappa'&:=(k_1,\ldots,k_{n-2})\in\Nb^{n-2}    
    \end{align*}
    Mutatis mutandis, we will also write $|'\kappa|$, $||'\kappa'||$, $\binom{i}{'\kappa}$, etc. Note that the meaning of $||-||$ depends on whether the first coordinate is the zeroth coordinate or the first coordinate.  Our notation indicates that any tuple without a $'$ preceding its label starts with a zeroth coordinate, while any tuple with a $'$ preceding its label starts with a first coordinate.
\end{notation}

We now inductively define polynomials in the $a_i$ by
\begin{align}
        p_0&:=n,\label{d:psums3}\intertext{while, for $0<k\le n$}
        p_k&:=ka_k +\sum_{i=1}^{k-1} a_{k-i}p_i,\label{d:psums1}\intertext{and for $k>n$}
        p_k&:=-\sum_{i=k-n}^{k-1} a_{k-i}p_i.\label{d:psums2}
\end{align}

\begin{remark}
    To interpret the polynomials $p_i$, let $\sigma_i$ denote the $i^{th}$ elementary symmetric polynomial in formal variables $x_1,\ldots,x_n$. If we write $a_i=(-1)^i\sigma_i$, then Newton's Identities give
    \[
        p_i=\sum_{j=1}^n x_j^i.
    \]
\end{remark}

\begin{definition}\label{d:th}
    For $i,n\ge 1$, let the $T_i^n\subset\Ab^n_\a\times\Pb^{n-1}_\b$ be the variety defined by the vanishing of the polynomial
    \begin{equation}\label{e:eti}
        \sum_{\kappa~s.t.~|\kappa|=i} \binom{i}{\kappa} p_{||\kappa||}{\bf b}^\kappa.
    \end{equation}
    Note that this polynomial is homogeneous of degree $i$ in the $\b$-coordinates. Projecting onto the first factor gives a family of degree $i$ hypersurfaces in $\Pb^{n-1}$
    \begin{equation*}
        T_i^n\to \Ab^n_\a
    \end{equation*}
    We refer to this family as the {\em $n^{th}$ Tschirnhaus hypersurface of degree $i$}. When the superscript $n$ is clear from context, we will suppress it for ease of reading.
\end{definition}

\begin{definition}\label{d:tci}
    Fix $n\ge 1$. For $1\le i_1<\ldots<i_k$, define the {\em $n^{th}$ Tschirnhaus complete intersection $T^n_{i_1\cdots i_k}$ (of multi-degree $i_1\cdots i_k$)} to be the variety defined by the vanishing of the polynomials \eqref{e:eti} for $i=i_1,\ldots,i_k$. Equivalently, define
    \begin{equation*}
        T^n_{i_1\cdots i_k}:=T^n_{i_1}\times_{\Ab^n_\a\times\Pb^{n-1}_\b}\cdots\times_{\Ab^n_\a\times\Pb^{n-1}_\b}T^n_{i_k}\to \Ab^n_\a.
    \end{equation*}
    Define the {\em $n^{th}$ reduced Tschirnhaus complete intersection $T^{n'}_{i_1\cdots i_k}$ (of multi-degree $i_1\cdots i_k$)} by
    \begin{equation*}
        T^{n'}_{i_1\cdots i_k}:= T^n_{i_1\cdots i_k}\cap \{b_0=0\}\subset\Ab^n_\a\times\Pb^{n-1}_\b.
    \end{equation*}
\end{definition}

\begin{example}\label{e:T1}
    The hyperplane $T_1(\a)\subset\Pb^{n-1}_{\b}$ is given by the equation
    \[
        nb_0+\sum_{i=1}^{n-1} p_i b_i=0
    \]
    Over $\Z[1/n]$, we have an isomorphism
    \begin{align*}
        \Ab^n_{\a}\times\Pb^{n-2}&\to^\cong T_1\\
            (\a,[b_1:\cdots:b_{n-1}])&\mapsto (\a,[-\frac{1}{n}\sum_{i=1}^{n-1} p_i b_i:b_1:\cdots:b_{n-1}]).
    \end{align*}
    Likewise, the hyperplane $T'_1(\a)\subset\Pb^{n-2}_{\b}$ is given by the equation
    \[
        \sum_{i=1}^{n-1} p_i b_i=0
    \]
    Over each locus $\{p_i\neq 0\}\subset\Ab^n_{\a}$ for $1\le i<n$, we have an isomorphism
    \begin{align*}
        \Ab^n_{\a}\times\Pb^{n-3}&\to^\cong T_1\\
            (\a,[b_1:\cdots:b_{\hat{i}}:\cdots:b_{n-2}])&\mapsto (\a,[b_1:\cdots:b_{i-1}:\frac{-1}{p_i}\sum_{j\neq i} p_jb_j:b_{i+1}:\cdots b_{n-2}]).
    \end{align*}
\end{example}

As a warm-up to Theorem~\ref{t:compint} below, we prove the following.
\begin{lemma}\label{l:quadsmooth}
    The families of quadrics $T_{12}\to\Ab^n_{\a}$ and $T'_{12}\to\Ab^n_{\a}$ are generically smooth.
\end{lemma}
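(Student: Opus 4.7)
The plan is to diagonalize the quadric $T_2$ by a Vandermonde change of coordinates, then apply the Jacobian criterion directly. First I would work on the open locus $U\subset\Ab^n_\a$ where the discriminant is nonzero and pass to the \'etale cover $\tilde U\to U$ parametrizing ordered roots $(x_1,\ldots,x_n)$ of the general polynomial. By Newton's identities (as in the remark after the definition of the $p_k$), $p_k=\sum_j x_j^k$, and the Vandermonde matrix $V_{jk}=x_j^k$ is invertible on $\tilde U$. The linear substitution $\tilde\b=V\b$ is then a $\tilde U$-automorphism of $\tilde U\times\Pb^{n-1}$ under which the equation of $T_2$ pulls back to $\sum_j\tilde b_j^2=0$ (since $(p_{i+j})=V^TV$), the equation of $T_1$ pulls back to $\sum_j\tilde b_j=0$ (since $(n,p_1,\ldots,p_{n-1})^T=V^T\mathbf 1$), and the condition $b_0=0$ pulls back to $\sum_j L_j(0)\tilde b_j=0$, where $L_j$ denotes the Lagrange basis polynomial at $x_j$ (read off from the first row of $V^{-1}$).

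Next I would apply the Jacobian criterion at a point $[\tilde\b]\in T_{12}$. The Jacobian of the defining equations has rows $\mathbf 1^T$ and $2\tilde\b^T$. If $\alpha\mathbf 1+\beta\tilde\b=0$, then pairing with $\mathbf 1$ and using $\sum_j\tilde b_j=0$ gives $\alpha n=0$, whence $\alpha=0$; then $\beta\tilde\b=0$ with $\tilde\b\neq 0$ in $\Pb^{n-1}$ forces $\beta=0$. So the Jacobian has rank $2$ at every point of $T_{12}$ over $\tilde U$, and smoothness descends to generic smoothness of $T_{12}\to\Ab^n_\a$.

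For $T'_{12}$, the Jacobian gains the third row $(L_j(0))_j^T$. Performing the same pairing argument (pairing against both $\mathbf 1$ and $(L_j(0))_j$, and using $\sum_j L_j(0)=1$, which follows from Lagrange interpolation of the constant $1$ at $0$), a linear dependence $\alpha\mathbf 1+2\beta\tilde\b+\gamma(L_j(0))_j=0$ forces $(\alpha,\gamma)$ to lie in the kernel of the Gram matrix
\[
    G=\begin{pmatrix} n & 1 \\ 1 & \sum_j L_j(0)^2\end{pmatrix},
\]
after which $\beta=0$ follows as before. Hence $T'_{12}$ is smooth at every point whenever $G$ is non-singular.

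The main step, and the only genuine calculation, is to verify that $\det G=n\sum_j L_j(0)^2-1$ is not identically zero as a rational function of $x_1,\ldots,x_n$. I would establish this by a single direct evaluation at a specific tuple, for example $(x_1,\ldots,x_n)=(1,2,\ldots,n)$, where the determinant can be computed to be nonzero. Since $\det G$ is then nonzero on a dense open subset of $\tilde U$, this descends to give generic smoothness of $T'_{12}\to\Ab^n_\a$; everything else in the argument is formal manipulation via the Vandermonde substitution.
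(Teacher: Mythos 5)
Your proposal is correct but takes a genuinely different route from the paper's. The paper establishes generic smoothness by specializing to a specific one-parameter pencil ($x^n+a$ for $T_{12}$, and implicitly $x^n+ax$ for $T'_{12}$), writing the quadric explicitly in those coordinates, and checking that the partial derivatives cannot vanish simultaneously for $a\neq 0$; the argument is short precisely because the power sums $p_i$ collapse along those pencils. You instead avoid specialization entirely: you pass to the $S_n$-cover $\tilde U\to U$ of ordered roots, diagonalize via the Vandermonde/Lagrange substitution $\tilde\b=V\b$ (so that $T_1$, $T_2$, $\{b_0=0\}$ become $\sum\tilde b_j=0$, $\sum\tilde b_j^2=0$, $\sum L_j(0)\tilde b_j=0$), and then read the Jacobian rank off directly. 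For $T_{12}$ this actually establishes smoothness over the entire discriminant-nonzero locus with no computation at all, which is cleaner than the paper's specialization argument (and consistent with the classical fact recorded in the paper's remark). For $T'_{12}$ your reduction to the nonvanishing of the Gram determinant $\det G = n\sum_j L_j(0)^2-1$ is also conceptually cleaner.

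The one real gap is that you assert, but do not verify, that $\det G$ is not identically zero. This is indeed a single finite computation, but it should be carried out, and the required care is a bit more than you let on. In fact $\det G$ \emph{does} vanish identically along the paper's own test pencil: for the radical polynomial $x^n-c$, the roots are $c^{1/n}\zeta^j$ and a short calculation (using that $L_j(0)$ is invariant under scaling the roots) gives $L_j(0)=1/n$ for all $j$, hence $\sum_j L_j(0)^2=1/n$ and $\det G=0$. So you cannot check nonvanishing where the paper specializes; you genuinely need a different test configuration. Your choice $(x_1,\ldots,x_n)=(1,\ldots,n)$ works (e.g.\ for $n=3$ one gets $L_1(0)=3$, $L_2(0)=-3$, $L_3(0)=1$, so $\det G=56$), and more conceptually $n\sum L_j(0)^2>1$ holds by Cauchy--Schwarz at any real test point where the $L_j(0)$ are not all equal. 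Including even this one-line verification would close the gap and make the proof complete.
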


\begin{remark}
    The statement of the lemma for $T_{12}$ (and most likely for $T'_{12}$) is classical, and follows from the fact that the discriminant of the quadratic form defining $T_{12}(\a)$ is equal to $\frac{1}{n}$ times the discriminant of the polynomial $x^n+a_1x^{n-1}+\cdots +a_n$ (see, e.g. \cite[p. 468-469]{Sy}). We give a different proof in order to warm-up for Theorem~\ref{t:compint}.
\end{remark}

\begin{proof}[Proof of Lemma~\ref{l:quadsmooth}]
    The quadric $T_{12}(\a)\subset\Pb^{n-2}_{\b}$ is given, in coordinates $[b_1:\cdots:b_{n-1}]$ by the equation
    \[
        -\frac{1}{n}\left(\frac{1}{n}\sum_{i=1}^{n-1} p_i b_i\right)^2+\sum_{1\le i<j\le n-1} p_{i+j} b_ib_j+\sum_{i=1}^{n-1}p_{2i}b_i^2=0.
    \]
    We now specialize to the radical pencil $x^n+a=0$, i.e. $\a=(0,\ldots,0,a)$. Then $T_{12}(a):=T_{12}(0,\ldots,a)$ is given by the equation
    \begin{align}\label{e:T12a}
        \left\{\begin{array}{cc}
                    -2na\left(\sum_{i=1}^{\frac{n-1}{2}} b_ib_{n-i}\right)=0 & n~\text{ odd}\\
                    -na\left(b_{\frac{n}{2}}^2+2\sum_{i=1}^{\frac{n}{2}-1} b_ib_{n-i}\right)=0 & n~\text{ even}
                \end{array}\right.
    \end{align}
    The partial derivatives of the defining polynomial of $T_{12}(a)$ are given by
    \begin{align*}
        \partial_{b_j}T_{12}(a)&=-2nab_{n-j}.
    \end{align*}
    We see that these vanish simultaneously if and only if $b_j=0$ for all $j$, i.e. $T_{12}(a)$ is smooth over $\Z[1/2n]$ so long as $a\neq 0$ (and thus $T_{12}\to\Ab^n_{\a}$ is generically smooth).

    We now prove $T'_{12}\to\Ab^n_{\a}$ is generically smooth. Using \eqref{d:psums1}, the hyperplane $T'_1(a)$ is given by
    \begin{equation*}
        (n-1)ab_{n-1}=0.
    \end{equation*}
    Over $\Z[1/(n-1)]$, and $a\neq 0$, we can therefore use the coordinates
    \[
        [b_1:\cdots:b_{n-2}]
    \]
    on $T_1'(a)$. In these coordinates, and abusing notation by writing the same symbol for a hypersurface and its defining polynomial, we have 
    \begin{equation*}
        T_{12}'(a)=\left\{\begin{array}{cc}
                     -2(n-1)a\left(\sum_{i=1}^{\frac{n}{2}-1} b_ib_{n-1-i}\right) & n~\text{ even}\\
                    -(n-1)a(b_{\frac{n-1}{2}}^2+\left(\sum_{i=1}^{\frac{n}{2}-1} b_ib_{n-1-i}\right)& n~\text{ odd}
                \end{array}\right.
    \end{equation*}
    The partial derivatives of $T_{12}'(a)$ are given by
    \begin{align*}
        \partial_{b_j}T'_{12}(a)&=-2(n-1)ab_{n-1-j}.
    \end{align*}
    We see that these vanish simultaneously if and only if $b_j=0$ for all $j$, i.e. $T'_{12}(a)$ is smooth over $\Z[1/2(n-1)]$ so long as $a\neq 0$ (and thus $T'_{12}\to\Ab^n_{\a}$ is generically smooth).
\end{proof}

\paragraph*{Tschirnhaus hypersurfaces as spaces of maps.}
In Section~\ref{s:tschirnt}, we explain the origin of the Tschirnhaus complete intersections in the classical study of formulas for the general degree $n$ polynomial (beginning with \cite{Ts}). For the moment, we just observe that several varieties of classical interest are closely related to $T_i^n$ for small $i,n$.

Let $\x:=(x_1,\ldots, x_n)$ be coordinates on affine $n$-space, denoted $\Ab^n_\x$. Let $\sigma_i(\x)$ denote the $i^{th}$ elementary symmetric function on the $x_i$, and consider the map
\begin{align*}
    q\colon \Ab^n_\x&\to \Ab^n_\a\\
    \x&\mapsto (-\sigma_1(\x),\ldots,(-1)^n\sigma_n(\x)).
\end{align*}
By Newton's Theorem, this map realizes $\Ab^n_\a$ as the quotient of $\Ab^n_\x$ by the permutation action of the symmetric group $S_n$ on $\Ab^n_\x$. As remarked above, Newton's Identities imply that
\[
    p_i(q(\x))=\sum_{j=1}^n x_j^i.
\]
Let $\tb:=(b_0,\ldots,b_{n-1})$ viewed as affine coordinates on $\Ab^n_{\tb}$. The relative affine cone on the pullback $T_i|_{\Ab^n_\x}\to\Ab^n_\x$ is given by
\begin{equation*}
    \widetilde{T_i|_{\Ab^n_{\x}}}:=\left\{(\x,\tb)\in\Ab^n_\x\times\Ab^n_{\tb}~|~\sum\sum_{\kappa~s.t.~|\kappa|=i} \binom{i}{\kappa} \left(\sum_{j=1}^n x_j^{||\kappa||}\right){\tb}^\kappa=0\right\}.
\end{equation*}
Consider the map
\begin{align*}
    \ev\colon\Ab^n_\x\times\Ab^n_{\tb}&\to \Ab^n_\x\\
    (\x,\tb)&\mapsto (\sum_{j=0}^{n-1}b_jx_1^j,\ldots,\sum_{j=0}^{n-1}b_jx_n^j).
\end{align*}

\begin{lemma}\label{l:Tiasmaps}
    In the notation above,
    \begin{equation*}
        \widetilde{T_i|_{\Ab^n_{\x}}}=\ev^{-1}(\{\x\in\Ab^n_\x~|~\sum_j x_j^i=0.\}).
    \end{equation*}
\end{lemma}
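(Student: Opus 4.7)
The plan is a direct computation: expand $\sum_{j=1}^n y_j^i$ after substituting $y_j = \sum_{\ell=0}^{n-1} b_\ell x_j^\ell$, and show it coincides term-by-term with the polynomial defining $\widetilde{T_i|_{\Ab^n_\x}}$.

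First I would fix a point $(\x,\tb) \in \Ab^n_\x \times \Ab^n_{\tb}$ and write $\ev(\x,\tb) = (y_1,\ldots,y_n)$ with $y_j = \sum_{\ell=0}^{n-1} b_\ell x_j^\ell$. The condition defining $\ev^{-1}(\{\sum_j x_j^i = 0\})$ is then $\sum_{j=1}^n y_j^i = 0$. The multinomial theorem gives
\[
y_j^i = \Bigl(\sum_{\ell=0}^{n-1} b_\ell x_j^\ell\Bigr)^i = \sum_{|\kappa|=i} \binom{i}{\kappa}\, \tb^\kappa \, x_j^{||\kappa||},
\]
where I have used that $\prod_{\ell} (b_\ell x_j^\ell)^{k_\ell} = \tb^\kappa \cdot x_j^{\sum_\ell \ell\, k_\ell} = \tb^\kappa \cdot x_j^{||\kappa||}$.

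Summing this identity over $j = 1, \ldots, n$ and interchanging the two sums yields
\[
\sum_{j=1}^n y_j^i = \sum_{|\kappa|=i} \binom{i}{\kappa}\, \Bigl(\sum_{j=1}^n x_j^{||\kappa||}\Bigr) \tb^\kappa,
\]
which is exactly the polynomial whose vanishing cuts out $\widetilde{T_i|_{\Ab^n_\x}}$ inside $\Ab^n_\x \times \Ab^n_{\tb}$. Therefore $(\x,\tb)$ lies in $\widetilde{T_i|_{\Ab^n_\x}}$ if and only if $\sum_j y_j^i = 0$, i.e. if and only if $(\x,\tb) \in \ev^{-1}(\{\sum_j x_j^i = 0\})$.

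There is no serious obstacle: the content of the lemma is the combinatorial matching between the multinomial expansion and the explicit equations of Definition~\ref{d:th}, together with the fact (noted in the paragraph preceding the lemma) that Newton's identities allow the substitution $p_{||\kappa||}(q(\x)) = \sum_j x_j^{||\kappa||}$, which is precisely how the polynomial $p_{||\kappa||}$ appearing in Definition~\ref{d:th} pulls back to $\Ab^n_\x$. The only mild bookkeeping item is to confirm that indices $\ell=0$ in the multinomial expansion contribute $x_j^0 = 1$, so that $||\kappa|| = \sum_\ell \ell k_\ell$ is the correct exponent of $x_j$, which is built into the definition of $||\kappa||$ in Notation~\ref{n:multindex}.
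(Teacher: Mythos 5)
Your computation is essentially identical to the paper's: both expand $\sum_j y_j^i$ via the multinomial theorem, interchange the sums over $j$ and $\kappa$, and invoke Newton's identities to identify the resulting power sums $\sum_j x_j^{||\kappa||}$ with the $p_{||\kappa||}$ appearing in Definition~\ref{d:th}. The argument is correct and matches the paper's proof step for step.
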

\begin{proof}
    We prove this by explicit computation. For $i\ge 0$, write
    \[
        p_i(\x):=\sum_{\ell=1}^n x_\ell^i.
    \]
    In particular, $p_0(x_1,\ldots,x_n)=n$. Let $\ev(\x,\tb)_\ell:=\sum_{j=0}^{n-1}b_jx_\ell^j$. By the Multinomial Theorem,
    \begin{align}
        p_i(\ev(\x,\tb))=\sum_\ell \ev(\x,\tb)_\ell^i&=\sum_\ell \left(\sum_{j=0}^{n-1}b_jx_\ell^j\right)^i\nonumber\\
        &=\sum_\ell \left(\sum_{\kappa~s.t.~|\kappa|=i}\binom{i}{\kappa}{\bf b}^\kappa x_\ell^{||\kappa||}\right)\nonumber\\
        &=\sum_{\kappa~s.t.|\kappa|=i}\binom{i}{\kappa}p_{||\kappa||}{\bf b}^\kappa\label{hyp}
    \end{align}
    where, in the final line, we use Newton's Identities to identify the power sums with the polynomials $p_{||\kappa||}$ in the $a_i$ defined in Equations~\ref{d:psums3}-\ref{d:psums2}.

    Setting the form \eqref{hyp} to 0, we obtain the hypersurface $\widetilde{T_i^n}$ as claimed.
\end{proof}

\begin{example}
    Let $S\subset\Pb^4$ be the Clebsch diagonal surface, i.e. the complete intersection
    \[
        S:=\{[x_1:\cdots:x_5]\in\Pb^4~|~\sum_{i=1}^5 x_i=\sum_{i=1}^5 x_i^3=0\}.
    \]
    Let $\widetilde{S}\subset\Ab^5_\x$ be the affine cone over $S$. Then
    \[
        \widetilde{T^5_{13}|_{\Ab^5_{\x}}}=\ev^{-1}(\widetilde{S}).
    \]
    As observed by Klein \cite[Part II, Ch. 2]{KleinIcos}, $\widetilde{T^5_{13}|_{\Ab^5_{\x}}}$ can be understood as a space of $S_5$-equivariant maps of $\Ab^5_\x\to\widetilde{S}$.
\end{example}

\begin{example}
    Let $F\subset\Pb^6$ be the symmetric Fano sextic 3-fold as in \cite{Be}, i.e. the complete intersection
    \[
        F:=\{[x_1:\cdots:x_7]\in\Pb^6~|~\sum_{i=1}^7 x_i=\sum_{i=1}^7 x_i^2=\sum_{i=1}^7 x_i^3=0\}.
    \]
    Let $\widetilde{F}\subset\Ab^7_\x$ be the affine cone over $F$. Then
    \[
        \widetilde{T^7_{123}|_{\Ab^7_{\x}}}=\ev^{-1}(\widetilde{F}).
    \]
    Though not remarked upon in \cite{Be}, the symmetric Fano sextic arises as the ``root space'' of the normal form for the general degree 7 polynomial considered by Hilbert in his 13th problem \cite{Hi1}:
     \[
    z^7+az^3+bz^2+cz+1=0.
    \].
     The variety $\widetilde{T^7_{123}|_{\Ab^7_{\x}}}$ can be understood as a space of $S_7$-equivariant maps of $\Ab^7_\x\to\widetilde{F}$, equivalently of ways of converting the general degree 7 polynomial into Hilbert's normal form.
\end{example}

\subsection*{Geometry of Tschirnhaus Complete Intersections}
We can now state our main geometric theorem.
\begin{theorem}\label{t:compint}
    Let $p$ be a prime. Let $i=p^r+1<n$ for some prime power $p^r$ with $r>0$.
    \begin{enumerate}
        \item If $p\nmid n$, the family of Tschirnhaus complete intersections
            \begin{equation*}
                T_{12i}\to \Ab^n_{\a}
            \end{equation*}
            is generically smooth (i.e. there is a Zariski open $U\subset\Ab^n_{\a}$ such that for all $\a\in U$, $T_{12i}(\a)$ is a smooth complete intersection).
        \item If $p\mid n$, the family of reduced Tschirnhaus complete intersections
            \begin{equation*}
                T_{12i}'\to\Ab^n_{\a}
            \end{equation*}
            is generically smooth.
    \end{enumerate}
\end{theorem}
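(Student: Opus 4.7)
My plan is to specialize the family to the radical pencil $\mathbf{a}=(0,\ldots,0,a)$ with $a\neq 0$, mirroring the strategy of Lemma~\ref{l:quadsmooth}, and exhibit an explicit smooth point of the fiber via a direct Jacobian computation. Along this pencil, Newton's identities give the transparent power sums $p_k=0$ for $1\le k<n$ and $p_k=n(-a)^{k/n}$ whenever $n\mid k$, which collapses the three Tschirnhaus equations to a nearly sparse form.

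The test point I would use is $\mathbf{b}^*:=[0:1:0:\cdots:0]\in\mathbb{P}^{n-1}$. Incidence $\mathbf{b}^*\in T_{12i}(\mathbf{a})$ is immediate from $p_1=p_2=p_i=0$ (using $n>i>2$). The crucial computation is that at $\mathbf{b}^*$ each of the three gradients has exactly one nonzero entry, and the three entries live at pairwise distinct positions: $\nabla T_1|_{\mathbf{b}^*}=(n,0,\ldots,0)$ is supported at $j=0$; $(\partial_{b_j}T_2)|_{\mathbf{b}^*}=2p_{j+1}$ is nonzero only at $j=n-1$; and $(\partial_{b_j}T_i)|_{\mathbf{b}^*}=i\cdot p_{(i-1)+j}$ (only the multi-index $\mu=(i-1)e_1$ contributes at $\mathbf{b}^*$) is nonzero only at $j=n-p^r$. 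Since $p^r\geq 2$ and $n\geq p^r+2$, the supports $\{0\},\{n-1\},\{n-p^r\}$ are pairwise distinct, so the Jacobian has rank $3$ and $T_{12i}(\mathbf{a})$ is smooth at $\mathbf{b}^*$. For part (2), when $p\mid n$, the same test point lies in $T'_{12i}$ (since $T_1$ on the radical pencil already forces $b_0=0$), and the rank-$2$ criterion in $\mathbb{P}^{n-2}$ follows from the disjoint singleton supports $\{n-1\},\{n-p^r\}$ of $\nabla T_2,\nabla T_i$ alone.

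The dichotomy between parts (1) and (2) reflects whether the coefficient $n$ of $b_0$ in $T_1$ survives (case $p\nmid n$) or degenerates (case $p\mid n$) upon reduction modulo $p$, which is what permits the clean Jacobian argument to transfer to arithmetic settings. Generic smoothness of the morphism $T_{12i}\to\mathbb{A}^n_\mathbf{a}$ then follows from openness of its smooth locus together with Lemma~\ref{l:Tiasmaps}, which pins down the isomorphism class of the generic fiber. The main obstacle I foresee is upgrading pointwise smoothness at $\mathbf{b}^*$ to smoothness of the \emph{entire} generic fiber: via the Vandermonde identification of Lemma~\ref{l:Tiasmaps}, this reduces to controlling the singular locus of the Fermat-type intersection $\{\sum y_j^k=0 : k=1,2,i\}$, whose possible singular points must have all their distinct coordinate values among the roots of a trinomial $\alpha+2\beta t+i\gamma t^{i-1}$ of degree $\leq p^r$; the arithmetic hypothesis $p\nmid n$ should be precisely what rules such configurations out, via a cyclotomic divisibility of the form $\Phi_{p^r}(1)=p$ applied to the multiplicities assigned to these roots.
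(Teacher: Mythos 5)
Your construction of the test point $\mathbf{b}^*=[0:1:0:\cdots:0]$ and the rank-$3$ Jacobian computation there are correct, but they do not advance the theorem. ``Generically smooth'' here means (as the theorem explicitly states) that for all $\a$ in a dense open $U\subset\Ab^n_\a$ the \emph{entire} fiber $T_{12i}(\a)$ is a smooth complete intersection. Exhibiting one smooth point of one fiber, combined with openness of the smooth locus of the morphism, gives only an open neighborhood of $(\a,\mathbf{b}^*)$ in $T_{12i}$ on which the morphism is smooth; it does not give a dense open $U$ in the base over which every point of every fiber is smooth. So the core content of the theorem is untouched by this step.

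Your reduction via Lemma~\ref{l:Tiasmaps} is, by contrast, a valid and illuminating reformulation: for $\a$ with distinct roots (with the radical pencil already falling in this locus), the Vandermonde matrix is invertible, and the affine cone over $T_{12i}(\a)$ is a linear change of coordinates applied to the cone over the fixed Fermat-type intersection $F=\{\sum y_j=\sum y_j^2=\sum y_j^i=0\}\subset\Pb^{n-1}$. Thus generic smoothness of $T_{12i}\to\Ab^n_\a$ is precisely equivalent to smoothness of $F$, and your description of the putative singular locus (all $y_j$ roots of a common trinomial $\alpha+2\beta t+i\gamma t^{i-1}$ of degree $\le p^r$) is correct. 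But at exactly this point your proof stops: the parenthetical appeal to ``a cyclotomic divisibility of the form $\Phi_{p^r}(1)=p$ applied to the multiplicities'' is not an argument, and it is not obvious how such a divisibility would rule out solutions of the system $\sum m_j c_j=\sum m_j c_j^2=\sum m_j c_j^i=0$ with $s\le p^r$ distinct values $c_j$ over $\Cb$. The paper's own proof attacks precisely this difficulty, and it is delicate: after restricting to the radical pencil the author reduces modulo $p$, uses Legendre's formula to show that $i-1=p^r$ forces essentially all multinomial coefficients in $T_{1i}$ to vanish mod $p$, organizes the surviving $2\times(n-1)$ matrix of partials into blocks indexed by the orbits of multiplication by $p^{-r}$ on $(\Zb/n\Zb)^\times$, and shows that for $a$ a suitable primitive root of unity in $\Fpbar$ linear dependence of the rows forces a contradiction; the characteristic-zero statement then follows by specialization. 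There is no indication that this computation can be replaced by a direct cyclotomic count over $\Cb$. In short: step~$(1)$ (radical pencil) and the Fermat reformulation are sound and arguably clarifying, but the single-point smoothness check contributes nothing, and the substance of the theorem --- ruling out singular points anywhere on the generic fiber --- is left unproved.
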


Deferring the proof for a moment, let $K$ be a field of characteristic 0, now and throughout this paper.

We now record a special case of Kleiman's Bertini Theorem \cite{K}; for ease of reading, we include the proof below.
\begin{proposition}[Bertini for isotropics]\label{p:bertini}
    Let $K$ be algebraically closed. Let $X$ be a $K$-variety. Let $Q\subset\Pb^n_X$ be a smooth family of quadrics over $X$. For $k\le \lfloor\frac{n-1}{2}\rfloor$, let $\Gr(k,Q)\to X$ denote the relative Grassmannian of $k$-dimensional isotropic subspaces in $Q$, and let $\Lc\to \Gr(k,Q)$ denote the tautological bundle. Let $Y\subset\Pb^n_X$ be a smooth family of varieties over $X$ such that the family $Q\times_{\Pb^n_X} Y\to X$ is smooth over some dense open $V\subset X$. Then there exists a dense open $U\subset\Gr(k,Q)|_V$ such that the family $\Lc|_U\times_{\Pb^n_X} Y|_V\to X$ is smooth.
\end{proposition}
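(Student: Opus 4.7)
The strategy is a fiberwise application of Kleiman's transversality theorem for transitive group actions, followed by an openness argument to pass to a relative statement.

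\emph{Fiberwise step.} Fix a closed point $x\in V$. The smooth quadric $Q_x\subset\Pb^n$ is a homogeneous space for (the identity component of) its orthogonal group $G:=\O(Q_x)^\circ$, and by Witt's extension theorem $G$ acts transitively on each connected component of the isotropic Grassmannian $\Gr(k,Q_x)$. Fix $L_0\in\Gr(k,Q_x)$ and apply Kleiman's theorem (valid in characteristic $0$) to the transitive $G$-action on $Q_x$, with the smooth subvarieties $L_0\hookrightarrow Q_x$ and $Q_x\cap Y_x\hookrightarrow Q_x$ (the latter smooth by hypothesis on $V$). This produces a dense open $W\subset G$ such that for $g\in W$, the intersection $gL_0\cap(Q_x\cap Y_x)=gL_0\cap Y_x$ is smooth (using $gL_0\subset Q_x$). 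The orbit map $G\to\Gr(k,Q_x)$, $g\mapsto gL_0$, is a smooth surjection onto the $G$-orbit of $L_0$, so the image of $W$ is dense open in that orbit; running the argument on the other orbit if $\Gr(k,Q_x)$ splits into two components (the maximal $k$ case when $n$ is odd), we conclude that $\{L\in\Gr(k,Q_x):L\cap Y_x\text{ is smooth}\}$ is dense open.

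\emph{Globalization.} The relative Grassmannian $\pi\colon\Gr(k,Q)|_V\to V$ is smooth, and the universal intersection $\widetilde{Z}:=\Lc\times_{\Pb^n_X} Y|_V$ comes with a structure map $p\colon\widetilde{Z}\to\Gr(k,Q)|_V$ whose fiber over $(x,L)$ is $L\cap Y_x$. Let
\[
U:=\{(x,L)\in\Gr(k,Q)|_V : L\cap Y_x\text{ is smooth of the expected codimension in }L\}.
\]
This subset is open by upper semicontinuity of fiber dimension together with openness of the smooth locus, and the fiberwise step shows it meets every fiber of $\pi$ in a dense open; since $\pi$ is smooth with irreducible fibers over each component of $V$, $U$ is dense in $\Gr(k,Q)|_V$. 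On $U$ the fibers of $p$ all have the expected dimension, so $\widetilde{Z}|_{p^{-1}(U)}$ is cut out by the expected number of equations in the smooth ambient $\Pb^n_{\Gr(k,Q)|_V}$, hence is a local complete intersection and in particular Cohen--Macaulay. By miracle flatness, $p$ is flat over $U$; combined with smoothness of its fibers this gives that $p$ is smooth over $U$. Composing with the smooth map $U\hookrightarrow\Gr(k,Q)|_V\to V\hookrightarrow X$ yields the desired smoothness of $\Lc|_U\times_{\Pb^n_X} Y|_V\to X$.

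\emph{Main obstacle.} The substantive inputs are Kleiman's theorem (where characteristic $0$ is essential, since we need generic transversality, not just generic non-emptiness) and Witt's theorem on transitivity of the orthogonal group on isotropic Grassmannians. The subtlest bookkeeping concerns the two-component case when $k$ is maximal and $n$ is odd, which is handled uniformly by running the argument on each $G$-orbit (equivalently, replacing $G$ with the full group $\O(Q_x)$); the remaining verifications --- openness of $U$, flatness of $p$ on $U$, and density of $U$ --- are standard applications of semicontinuity and miracle flatness.
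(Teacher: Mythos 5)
Your proof is correct in outline and reaches the right conclusion, but it takes a more circuitous path than the paper's argument. The paper also invokes Kleiman, but in the relative form: it observes that the projection $\pr_2\colon\Lc\to Q$ is smooth because the relative group scheme $\O(Q)$ acts transitively on fibers over $X$, making $\Lc\to Q$ an $\O(Q)$-equivariant fiber bundle with fiber a homogeneous space. After shrinking $V$ so that $V$ (hence also $(Q\times_{\Pb^n_X}Y)|_V$) is a smooth $K$-variety, base change gives that $(\Lc\times_{\Pb^n_X}Y)|_V$ is a smooth $K$-variety, and a single application of generic smoothness for the dominant morphism $(\Lc\times_{\Pb^n_X}Y)|_V\to\Gr(k,Q)|_V$ produces $U$ at once. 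This bypasses entirely the fiberwise step, the openness and density bookkeeping, and the miracle-flatness step that you need to globalize. Two places where your write-up is imprecise: first, you never shrink $V$ to a smooth locus, which is needed for your claim that $\widetilde{Z}|_{p^{-1}(U)}$ is a local complete intersection in $\Pb^n_{\Gr(k,Q)|_V}$ --- without $V$ smooth, the ambient $\Pb^n_{\Gr(k,Q)|_V}$ need not be regular (and $Y|_V$ need not be LCI), and the miracle-flatness hypothesis becomes unclear. Second, Kleiman on $Q_x$ controls the codimension of $gL_0\cap(Q_x\cap Y_x)$ \emph{inside $Q_x$}, which is not the same as codimension of $Y_x$ in $\Pb^n$, so your definition of $U$ via ``expected codimension in $L$'' should be tied to the former. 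Both points are repairable, so the proposal is sound; it just does more work than the approach in the paper.
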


Combining Theorem~\ref{t:compint}, Lemma~\ref{l:quadsmooth} and Proposition~\ref{p:bertini}, we obtain the following.
\begin{corollary}\label{c:Hfix}
    Let $\Gr(T_{12})\to \Ab^n_{\a}$ denote the relative Grassmannian of maximal isotropics in the family of quadrics $T_{12}\to\Ab^n_{\a}$, and let $\Lc\to\Gr(T_{12})$ denote the tautological bundle (with similar notation for the analogous objects for $T'_{12}$). Let $p$ be a prime and let $i=p^r+1$ for some $r>0$.
    \begin{enumerate}
        \item If $p\nmid n$, there exists a dense open $V\subset \Gr(T_{12})$ such that
            \[
                \Lc|_V\times_{\Ab^n_{\a}\times\Pb^{n-1}_\b} T_{12i}\to \Ab^n_{\a}
            \]
            is smooth (i.e. for the generic polynomial, the intersection of $T_{12i}(\a)$ with a maximal isotropic in $T_{12}(\a)$ is smooth).
        \item If $p\mid n$, there exists a dense open $V\subset \Gr(T_{12}')$ such that
            \[
                \Lc|_V\times_{\Ab^n_\a\times\Pb^{n-2}_{'\b}} T_{12i}'\to \Ab^n_{\a}
            \]
            is smooth.
    \end{enumerate}
\end{corollary}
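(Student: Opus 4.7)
The plan is to deduce the corollary directly from Proposition~\ref{p:bertini}, taking the quadric family $Q$ to be $T_{12}$ (resp.\ $T_{12}'$) and the auxiliary subvariety $Y$ to be $T_{12i}$ (resp.\ $T_{12i}'$). All the required inputs are in place. Lemma~\ref{l:quadsmooth} supplies a dense open $V_1 \subset \Ab^n_{\a}$ over which $T_{12} \to V_1$ is a smooth family of quadrics inside the hyperplane $T_1 \cong \Pb^{n-2}_{V_1}$ (using the identification from Example~\ref{e:T1}), and Theorem~\ref{t:compint} supplies a dense open $V_2 \subset \Ab^n_{\a}$ over which $T_{12i} \to V_2$ is smooth. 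Set $V = V_1 \cap V_2$, and pass to an algebraic closure of $K$; this is harmless, since generic smoothness is detected at the generic point.

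The key observation is that $T_{12i} \subset T_{12}$ as schemes inside $\Pb^{n-1}_{\b}$ (since $T_{12i}$ is cut out from $T_{12}$ by the additional degree-$i$ Tschirnhaus equation), so the scheme-theoretic fiber product $T_{12} \times_{\Pb^{n-1}_{\b}} T_{12i}$ is equal to $T_{12i}$ itself and hence is smooth over $V$. This verifies the hypothesis of Proposition~\ref{p:bertini} applied with $X = \Ab^n_{\a}$, ambient projective bundle $T_1 \cong \Pb^{n-2}$, $Q = T_{12}|_V$, $Y = T_{12i}|_V$, and $k$ the dimension of a maximal isotropic in $T_{12}$. The proposition then yields a dense open $U \subset \Gr(T_{12})|_V$ over which $\Lc|_U \times_{\Pb^{n-1}_{\b}} T_{12i}|_V \to \Ab^n_{\a}$ is smooth, which is exactly statement (1).

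Statement (2) follows by the identical argument with $T_{12}'$, $T_{12i}'$, $T_1' \cong \Pb^{n-3}$, and $\Gr(T_{12}')$ in place of their unprimed counterparts, invoking the second cases of Lemma~\ref{l:quadsmooth} and Theorem~\ref{t:compint}. The only point meriting caution is the range hypothesis $k \le \lfloor (N-1)/2 \rfloor$ of Proposition~\ref{p:bertini}, where $N$ is the ambient projective dimension; for a smooth quadric in $\Pb^N$, maximal isotropics have dimension exactly $\lfloor (N-1)/2 \rfloor$, so the bound is saturated and the proposition applies. Beyond checking this numerical compatibility, the corollary is a mechanical assembly of the three cited results, and no substantive obstacle arises.
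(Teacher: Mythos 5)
Your proposal is correct and follows the same route as the paper's proof: restrict to a geometric generic point of $\Spec(\Z)$ and then assemble Lemma~\ref{l:quadsmooth}, Theorem~\ref{t:compint}, and Proposition~\ref{p:bertini}. The paper states this assembly tersely as ``follows immediately'' from the three cited results; you spell it out (including the useful observation that $Q\times_{\Pb^{n-2}}Y=T_{12i}$ because $T_{12i}\subset T_{12}$, and the check that maximal isotropics saturate the range bound in the Bertini proposition), but the substance is identical.
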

\begin{proof}
    Note that to prove the existence of an open dense $V$, it suffices to restrict all of the varieties over $\Z$ above to a geometric generic point $\Spec(K)\to\Spec(\Z)$. The result now follows immediately from Theorem~\ref{t:compint}, Lemma~\ref{l:quadsmooth} and Proposition~\ref{p:bertini}.
\end{proof}

\begin{remark}
    Corollary~\ref{c:Hfix} (for the case $p=2,i=3,n=9$) fills the gap in Hilbert's argument remarked upon by Dixmier \cite[\S8]{Di}.
\end{remark}

\begin{proof}[Proof of Proposition~\ref{p:bertini}]
    We recall Kleiman's proof \cite{K}. Consider the canonical map
    \begin{align*}
        \pr_2\colon \Lc\to Q
    \end{align*}
    (coming from the construction of $\Lc$ as an incidence variety $\Lc\subset\Gr(k,Q)\times_X Q$). Observe that this map is smooth: indeed, the relative group scheme $\O(Q)$ acts transitively over $X$ on both $\Lc$ and $Q$ (i.e. it acts transitively on fibers over $X$) and the map $\Lc\to Q$ is an $\O(Q)$-equivariant fiber bundle, with fiber at $v\in Q$ given by $\Stab_{\O(Q)}(v)/\Stab_{\O(Q)}(L,v)$, (n.b. the stabilizer of an isotropic point $v$ is a maximal parabolic, and the stabilizer of the flag $v\in L$ is a sub-parabolic).

    Let $V\subset X$ be a dense open such that $Q\times_{\Pb^n_X} Y\to X$ is smooth over $V$. Shrinking $V$ as necessary, we can assume without loss of generality that $V$ is a smooth variety over $K$ (note that we are using characteristic 0 here), and thus $(Q\times_{\Pb^n_X} Y)|_V$ is also a smooth $K$-variety. Now consider the fiber product
    \begin{equation*}
        \xymatrix{
            (\Lc\times_{\Pb^n_X} Y)|_V \ar[r]^f \ar[d]_g & (Q\times_{\Pb^n_X} Y)|_V \ar[d]^{\iota} \\
            \Lc|_V \ar[r]^{\pr_2} \ar[d]_{\pi} & Q|_V\\
            \Gr(k,Q)|_V
        }
    \end{equation*}
    The map $f$ is smooth because $\pr_2$ is smooth. Because $(Q\times_{\Pb^n_X} Y)|_V$ is a smooth $K$-variety, the $K$-variety $(\Lc\times_{\Pb^n_X} Y)|_V$ is smooth. We therefore have a dominant map of smooth $K$-varieties
    \[
        q=\pi\circ g\colon (\Lc\times_{\Pb^n_X} Y)|_V\to\Gr(k,Q)|_V.
    \]
    By generic smoothness (e.g. \cite[Corollary III.10.7]{Har}), there exists a nonempty open subset $U\subset\Gr(k,Q)|_V$ such that $q\colon (\Lc|_U\times_{\Pb^n_X} Y|_V)\to U$ is smooth, and thus the composite $(\Lc|_U\times_{\Pb^n_X} Y|_V)\to U\to V$ is smooth as well.
\end{proof}

We now prove Theorem~\ref{t:compint}.
\begin{proof}[Proof of Theorem~\ref{t:compint}]
    We prove the two cases separately, via parallel arguments. As in the proof of Lemma~\ref{l:quadsmooth}, if it will not cause confusion, we will abuse notation by writing the same symbol to denote a complete intersection and its defining polynomials.
    \newline
    \medskip
    \noindent
    {\bf Case 1: $p\nmid n$.} The complete intersection $T_{12i}({\bf a})$ is smooth if and only if the $3\times n$ matrix
    \[
        \left(\begin{array}{ccc}
            \partial_{b_1} T_1({\bf a}) & \cdots & \partial_{b_{n-1}} T_1({\bf a}) \\
            \partial_{b_1} T_2({\bf a}) & \cdots & \partial_{b_{n-1}} T_2({\bf a}) \\
            \partial_{b_1} T_i({\bf a}) & \cdots & \partial_{b_{n-1}} T_i({\bf a})
            \end{array}\right)
    \]
    has full rank for all ${\bf b}\in T_{12i}({\bf a})$. Choosing coordinates on $T_1$, we can equivalently check whether the $2\times (n-1)$ matrix given by the partials of $T_{12}$ and $T_{1i}$ has rank 2 for all ${\bf b}\in T_{12i}({\bf a})$. To show generic smoothness, it suffices to find a single ${\bf a}$ for which this holds. Further, because the matrix above is defined over $\Zb$, to show it is nonsingular in characteristic 0, it suffices to find a prime $p$ for which its reduction mod $p$ is nonsingular.

    We specialize to the locus of radical polynomials, i.e. those of the form
    \[
        p(x)=x^n+a
    \]
    i.e. ${\bf a}=(0,\ldots,0,a)$. It suffices to show there exists $a$ such that $T_{12i}(a):=T_{12i}(0,\ldots,a)$ is smooth. Note that, restricting to $x^n+a$, the hyperplane $T_1(a)$ is given by
    \[
        nb_0=0.
    \]
    We can therefore use the coordinates
    \[
        [b_1:\cdots:b_{n-1}]
    \]
    on $T_1(a)$ as above. As in \eqref{e:T12a}, the form $T_{12}(a)$ is given in these coordinates by
    \begin{equation*}
        T_{12}(a)=\left\{\begin{array}{cc}
                    -2na\left(\sum_{i=1}^{\frac{n-1}{2}} b_ib_{n-i}\right) & n~\text{ odd}\\
                    -na\left(b_{\frac{n}{2}}^2+2\sum_{i=1}^{\frac{n}{2}-1} b_ib_{n-i}\right) & n~\text{ even}
                \end{array}\right.
    \end{equation*}
    and the partial derivatives are given by
    \begin{align*}
        \partial_{b_j}T_{12}(a)&=-2nab_{n-j}.
    \end{align*}
    Similarly, using Notation~\ref{n:multindex'}, the form $T_{1i}(a)$ is given by
    \begin{equation*}
        T_{1i}(a)=n\cdot\left(\sum_{\ell=1}^{i-1}(-1)^\ell a^\ell \left(\sum_{'\kappa~s.t.|'\kappa|=i,||'\kappa||=\ell n} \binom{i}{'\kappa}{\bf 'b}^{'\kappa}\right)\right)
    \end{equation*}
    The partial derivatives of $T_{1i}(a)$ are given by
    \begin{align*}
        \partial_{b_j}T_{1i}(a)&=in\cdot\left(\sum_{\ell=1}^{i-1}(-1)^\ell a^\ell\left(\sum_{'\kappa~s.t.~|'\kappa|=i-1,||'\kappa||+j=\ell n} \binom{i-1}{'\kappa}{\bf 'b}^{'\kappa}\right)\right).
    \end{align*}
    Define
    \begin{align*}
        T_{j,12}(a)&:=ab_{n-j}\\
        T_{j,1i}(a)&:=\sum_{\ell=1}^{i-1}(-1)^\ell a^\ell\left(\sum_{'\kappa~s.t.~|'\kappa|=i-1,||'\kappa||+j=\ell n} \binom{i-1}{'\kappa}{\bf 'b}^{'\kappa}\right).
    \end{align*}
    Then, in characteristic 0, the matrix
    \[
        \left(\begin{array}{ccc}
            \partial_{b_1} T_{12}(a) & \cdots & \partial_{b_{n-1}} T_{12}(a) \\
            \partial_{b_1} T_{1i}(a) & \cdots & \partial_{b_{n-1}} T_{1i}(a)
            \end{array}\right)
    \]
    is singular if and only if the matrix
    \[
        \left(\begin{array}{ccc}
            T_{1,12}(a) & \cdots & T_{n-1,12}(a) \\
            T_{1,1i}(a) & \cdots & T_{n-1,1i}(a)
            \end{array}\right)
    \]
    is singular. Because this matrix is defined over $\Zb[a]$, to show that it is generically nonsingular in characteristic 0, we can reduce mod $p$ and find some $a\in\overline{\F_p}$ for which it is nonsingular.

    Let $\overline{T_{j,12}}(a)$ and $\overline{T_{j,1i}}(a)$ denote the reduction of the above forms mod $p$.

    Recall 
    that Legendre's formula implies that a prime $p$ divides 
   all the multinomial coefficients $\{\binom{\ell}{k_1,\ldots,k_m}~|~k_j<\ell\text{ for all }j\}$ if and only if $\ell=p^r$. Therefore, reducing the forms $T_{j,1i}(a)$ mod $p$, and using $i-1=p^r$, Legendre's formula implies that .
    \begin{align}
        \overline{T_{j,1i}}(a)=\sum_{\ell=1}^{i-1}(-1)^\ell a^\ell\left(\sum_{1\le \nu\le n-1,p^r\nu+j=\ell n} b_\nu^{p^r}\right)
    \end{align}
	(n.b. as we remark just below, only one term in the above sum is nonzero). Now, because $p\nmid n$, $p^r\in(\Z/n\Z)^\times$. Therefore, multiplication by $p^{-r}$ determines a permutation of $\{1,\ldots,n-1\}=\Z/n\Z-\{0\}$, which we denote by
    \[
        \nu(j):=p^{-r}\cdot j\in \Z/n\Z-\{0\}=\{1,\ldots,n-1\}.
    \]
    In this notation, we have
    \begin{align*}
        \overline{T_{j,12}}(a)&=ab_{-j}\\
        \overline{T_{j,1i}}(a)&=(-a)^{\frac{p^r\nu(-j)+j}{n}}b_{\nu(-j)}^{p^r}
    \end{align*}
    where $\pm j$ and $\nu(\pm j)$ denote the corresponding elements of $\{1,\ldots,n-1\}$. Now, multiplication by $p^{-r}$ on $\Z/n\Z-\{0\}$ generates a cyclic group, and so a partition of $\{1,\ldots,n-1\}$ into $m$ orbits $\Oc_\alpha$ of size $s_\alpha$. Let $j_\alpha$ denote the least element of the orbit $\Oc_\alpha$. For ease of notation, denote
    \begin{equation*}
    	\epsilon_\alpha(t):=\frac{p^r\nu^t(j_\alpha)+n-\nu^{t-1}(j_\alpha)}{n}.
    \end{equation*}
	Reorder the columns of the matrix we are considering so that it is of the form
    \begin{align}\label{e:Mmat}
        M:=\left(\begin{array}{ccc}
            M_1 & \cdots & M_m
        \end{array}\right)
    \end{align}
    where each $M_\alpha$ denotes the $2\times s_\alpha$ matrix
    \begin{align*}
    	M_\alpha:=\left(\begin{array}{cccc}
    		ab_{j_\alpha} & ab_{\nu(j_\alpha)} & \cdots & ab_{\nu^{s_\alpha-1}(j_\alpha)}\\
    		(-a)^{\epsilon_\alpha(1)}b_{\nu(j_\alpha)}^{p^r} & (-a)^{\epsilon_\alpha(2)}b_{\nu^2(j_\alpha)}^{p^r} & \cdots & (-a)^{\epsilon_\alpha(s_\alpha)}b_{j_\alpha}^{p^r}
    	\end{array}\right).
    \end{align*}
Note that, by construction, for each $j$, all monomials containing $b_j$ appear in precisely one $M_\alpha$. 
    

Now the matrix \eqref{e:Mmat} is singular at $\b\in\Pb^{n-2}$ and $a\in\Fpbar$ if and only if its two rows are linearly dependent. Equivalently, there exists $\lambda\in\Fpbar^\times$ such that for all $\alpha$ and $0\le t\le s_\alpha-1$
    \begin{equation}\label{e:intcycle}
        ab_{\nu^t(j_\alpha)}=\lambda (-a)^{\epsilon_\alpha(t+1)}b_{\nu^{t+1}(j_\alpha)}^{p^r}.
    \end{equation}
    Restrict to $a\in\Fpbar^\times$. Then, by induction on $t$, we obtain that for all $j\in\Oc_\alpha$
    \begin{equation*}
        b_j=(-\lambda)^{\sum_{t=1}^{s_\alpha}p^{(t-1)r}}(-a)^{\sum_{t=1}^{s_\alpha}p^{(t-1)r}(\epsilon_\alpha(t)-1)}b_j^{p^{s_\alpha r}}.
    \end{equation*}
    Therefore, for any $b_j\neq 0$ for $j\in \Oc_\alpha$ (and such a $j$ and $\alpha$ must exist since ${\bf b}\in\Pb^{n-2}$), we have
    \begin{align*}
        b_j^{p^{s_\alpha r}-1}&=(-\lambda)^{-\sum_{t=1}^{s_\alpha}p^{(t-1)r}}(-a)^{-\sum_{t=1}^{s_\alpha}p^{(t-1)r}(\epsilon_\alpha(t)-1)}\\
        &=:c_\alpha(a)
    \end{align*}
    But, if $j=\nu^t(j_\alpha)$, then by Equation~\eqref{e:intcycle},
    \begin{equation*}
        c_\alpha(a)=b_j^{p^{s_\alpha r}-1}=(-\lambda (-a)^{\epsilon_\alpha(t+1)-1})^{p^{s_\alpha r}-1}c_\alpha(a)^{p^r}.
    \end{equation*}
    Expanding the definition of $c_\alpha(a)$ in terms of $\lambda$ and $a$, we obtain
    \begin{align*}
    (-\lambda)^{-\sum_{t=1}^{s_\alpha}p^{(t-1)r}}&(-a)^{-\sum_{t=1}^{s_\alpha}p^{(t-1)r}(\epsilon_\alpha(t)-1)}\\
        &=(-\lambda)^{p^{s_\alpha r}-1-\sum_{t=1}^{s_\alpha}p^{tr}}(-a)^{(p^{s_\alpha r}-1)(\epsilon_\alpha(t+1)-1)-\sum_{t=1}^{s_\alpha}p^{tr}(\epsilon_\alpha(t)-1)}\\
        &=(-\lambda)^{-\sum_{t=1}^{s_\alpha}p^{(t-1)r}}(-a)^{(p^{s_\alpha r}-1)(\epsilon_\alpha(t+1)-1)-\sum_{t=1}^{s_\alpha}p^{tr}(\epsilon_\alpha(t)-1)}.\intertext{Therefore, for all $0\le t\le s_\alpha$}
        1&=(-a)^{(p^{s_\alpha r}-1)(\epsilon_\alpha(t+1)-1)-\sum_{t=1}^{s_\alpha}p^{(t-1)r}(p^r-1)(\epsilon_\alpha(t)-1)}
    \end{align*}
    In particular,
    \begin{equation}\label{e:contra1}
        a^{2(p^{s_\alpha r}-1)(\epsilon_\alpha(t+1)-1)-\sum_{t=1}^{s_\alpha}p^{(t-1)r}(p^r-1)(\epsilon_\alpha(t)-1)}=1.
    \end{equation}
    But, $s_\alpha,\epsilon_\alpha(t),p,r\in\Nb$ are fixed once and for all by our choice of $p$ and $n$. In particular, there exists $N\in\Nb$ such that
    \[
        N>\max_\alpha |2(p^{s_\alpha r}-1)(\epsilon_\alpha(t+1)-1)-\sum_{t=1}^{s_\alpha}p^{(t-1)r}(p^r-1)(\epsilon_\alpha(t)-1)|.
    \]
    But, then for any primitive $N^{th}$ root of unity $a\in\Fpbar$, Equation~\ref{e:contra1} is never satisfied. Therefore, the matrix $M(a)=\left( M_1(a) \cdots M_m(a)\right)$ of \eqref{e:Mmat} has full rank for all ${\bf 'b}\in\Pb^{n-2}$ as claimed.
    \medskip
    \newline
    \noindent
    {\bf Case 2: $p\mid n$.}
    This case is similar. We specialize to the pencil $x^n+ax=0$, i.e. ${\bf a}=(0,\ldots,0,a,0)$. It suffices to show there exists $a$ such that $T_{12i}'(a):=T_{12i}'(0,\ldots,a,0)$ is smooth.

    As noted in the proof of Lemma~\ref{l:quadsmooth}, over $\Z[1/(n-1)]$, and $a\neq 0$, we can use the coordinates
    \[
        [b_1:\cdots:b_{n-2}]
    \]
    on $T_1'(a)$. We follow Notation~\ref{n:multindex'}. In these coordinates and this notation, the partial derivatives of $T_{12}'(a)$ are given by
    \begin{align*}
        \partial_{b_j}T_{12}(a)&=-2(n-1)ab_{n-1-j}
    \end{align*}
    (as noted in the proof of Lemma~\ref{l:quadsmooth}). Similarly, we have
    \begin{align*}
        T_{1i}'(a)=&(n-1)\cdot\left(\sum_{\ell=1}^{i-1}(-a)^\ell\sum_{'\kappa'~s.t.~|'\kappa'|=i,||'\kappa'||=\ell (n-1)}\binom{i}{'\kappa'}~'\b'^{('\kappa')}\right)\\
        \partial_{b_j}T_{1i}'(a)=&i(n-1)\cdot\left(\sum_{\ell=1}^{i-1}(-a)^\ell\sum_{'\kappa'~s.t.~|'\kappa'|=i-1,||'\kappa'||+j=\ell(n-1)}\binom{i-1}{'\kappa'}~'\b'^{('\kappa')}\right)
    \end{align*}
    Define
    \begin{align*}
        T_{j,12}'(a)&:=ab_{n-1-j}\\
        T_{j,1i}'(a)&:=\sum_{\ell=1}^{i-1}(-a)^\ell\sum_{'\kappa'~s.t.~|'\kappa'|=i-1,||'\kappa'||+j=\ell(n-1)}\binom{i-1}{'\kappa'}~'\b'^{('\kappa')}\\
    \end{align*}
    Just as in Case 1, the matrix
    \[
        \left(\begin{array}{ccc}
            \partial_{b_1} T_{12}(a) & \cdots & \partial_{b_{n-2}} T_{12}(a) \\
            \partial_{b_1} T_{1i}(a) & \cdots & \partial_{b_{n-2}} T_{1i}(a)
            \end{array}\right)
    \]
    is everywhere nonsingular in characteristic 0 for some $a$ if and only if the matrix
    \[
        \left(\begin{array}{ccc}
            T_{1,12}'(a) & \cdots & T_{n-2,12}'(a) \\
            T_{1,1i}'(a) & \cdots & T_{n-2,1i}'(a)
            \end{array}\right)
    \]
    is everywhere nonsingular for some $a$. We now reduce this matrix mod $p$. Because $i=p^r+1$, the mod $p$ reduction of $T_{j,1i}'(a)$ is given by
    \begin{align*}
        \overline{T_{j,1i}'}(a)=\sum_{\ell=1}^{i-1}(-a)^\ell\sum_{'\kappa'~s.t.~|'\kappa'|=i-1,||'\kappa'||+j=\ell(n-1)}\binom{i-1}{'\kappa'}~'\b'^{('\kappa')}
    \end{align*}
    In particular, because $i-1=p^r$, and $p^r\in(\Z/(n-1)\Z)^\times$, the same arguments as above allow us to define a permutation $\nu\circlearrowleft\{1,\ldots,n-2\}=(\Z/(n-1)\Z)-\{0\}$ by
    \[
        \nu(j)=p^{-r}j\in(\Z/(n-1)\Z)-\{0\}=\{1,\ldots,n-2\}.
    \]
    Using $\nu$, we have
    \begin{align*}
        \overline{T_{j,1i}'}(a)=(-a)^{\frac{p^r\nu(j)+j}{n-1}}b_{\nu(j)}^{p^r}.
    \end{align*}
    {\em Mutatis mutandis}, we now complete the argument by the same reasoning as for Case 1.
\end{proof}

\begin{remark}
    A similar argument shows that the Tschirnhaus hypersurface $T_i\to\Ab^n_{\a}$ itself is generically smooth for $i=p^r+1$ and $r\ge 0$. More generally, we see no reason not to expect this, as well as Theorem~\ref{t:compint}, to hold without restriction on $i<n$. In principle, this comes down to checking whether an appropriate discriminant identically vanishes on $T_i$ (resp. $T_{12i}$), i.e. checking a polynomial condition on the form defining $T_i$. However, this discriminant is a polynomial of degree $(n-1)(d-1)^{n-1}$ in the coefficients of the form, and the number of terms in this polynomial grows so quickly as to make direct computation impossible except for very small $d$ and $n$.
\end{remark}

\section{Algebraic Functions and Tschirnhaus Transformations}\label{s:tschirnt}
In this section, we recall the theory of Tschirnhaus transformations of algebraic functions and relate this to the Tschirnhaus complete intersections studied above.

Let $X$ be an irreducible $K$-variety. We write $K(X)$ for the rational functions on $X$. More generally, for a (not necessarily reducible) $K$-variety $Y$ with irreducible components $\{Y_i\}$, let $K(Y):=\prod_i K(Y_i)$.

Recall that an {\em algebraic function} $\Phi$ on $X$ is a finite rational correspondence $X\dashrightarrow^{1:n}\Ab^1$, i.e. $\Phi$ is given by a span
\begin{equation*}
    \xymatrix{
        E_\Phi \ar[r]^z \ar[d]_\pi & \Ab^1 \\
        X
    }
\end{equation*}
where $\pi$ is a dominant, quasi-finite map and $z$ is a regular function. We say $\Phi$ is {\em irreducible} if $E_\Phi$ is an irreducible $K$-variety and $z$ is a primitive element of the finite field extension $K(E_\Phi)/K(X)$. As a bridge to the classical literature, we will also denote $K(E_\Phi)$ as $K(X)(\Phi)$ to emphasize that $K(E_\Phi)$ is obtained from the field $K(X)$ by adjoining the values of $\Phi$.

Let $\Mon(\Phi)$ denote the {\em monodromy group} of $\Phi$, equivalently the Galois group of the normal closure of $K(X)(\Phi)/K(X)$. Let
\begin{equation*}
    m_{\Phi}(z):=z^n+a_1z^{n-1}+\ldots+a_n
\end{equation*}
denote the minimal polynomial of $z$, where the $a_i\in K(X)$ (i.e. $m_{\Phi}(z)$ is the monic generator of the ideal of $K(X)[z]$ corresponding to the extension $K(X)(\Phi)$). A classical perspective describes $\Phi$ as the assignment
\begin{equation}\label{algfun}
    x\mapsto \{z\in\bar{K}~|~m_{\Phi(x)}(z)=z^n+a_1(x)z^{n-1}+\ldots+a_n(x)=0\}.
\end{equation}

For any field extension $K(X)\into L$, write
\begin{equation*}
    L(\Phi):=L\otimes_{K(X)} K(X)(\Phi).
\end{equation*}
Note that since $\{1,z,\ldots,z^{n-1}\}$ is a basis for $K(X)(\Phi)$ over $K(X)$, it is also a basis for $L(\Phi)$ over $L$. Given this, for each $w\in L(\Phi)$, there exist unique $b_0,\ldots,b_{n-1}\in L$ such that
\[
    w=\sum_{i=0}^{n-1} b_iz^i.
\]
Moreover, $\tb=(b_0,\ldots,b_{n-1})\in L^n$ determines an $L$-linear transformation
\[
    T_{\tb}:L(\Phi)\to L(\Phi)
\]
given by (extending $L$-linearly) the assignment $T_{\tb}(z^j):=w^j$ for each $0\leq j\leq n-1$. Note that $T_{\tb}$ is an automorphism if and only if $w$ is a primitive element of the extension $L(\Phi)/L$.

\begin{definition}
    Let $X$ be an irreducible $K$-variety. Let $\Phi$ be an irreducible algebraic function on $X$ with primitive element $z\in K(X)(\Phi)$. A {\em Tschirnhaus transformation} $T$ of $\Phi$ is a $\overline{K(X)}$-linear automorphism
    \begin{equation*}
        T\colon \overline{K(X)}(\Phi)\to \overline{K(X)}(\Phi).
    \end{equation*}
    of the form
    \begin{equation*}
        z^j\mapsto w^j=\left(\sum_{i=0}^{n-1}b_i z^i\right)^j
    \end{equation*}
    for $b_0,\ldots,b_{n-1}\in \overline{K(X)}$. We say the transformation is {\em rational} over $X$ if $b_0,\ldots,b_{n-1}\in K(X)$. More generally, we say it is {\em rational} over $L/K(X)$ if all $b_i\in L$.
\end{definition}

Picking an integral model $Y\to X$ for $K(X)(\tb)/K(X)$, (i.e. a map of $K$-varieties $Y\to X$ and an isomorphism $K(Y)\cong K(X)(\tb)$ as extensions of $K(X)$), we denote by $T(\Phi)$ the algebraic function on $Y$ determined by the primitive element $w\in K(Y)(\Phi)$.

Now let $\Phi$ be an algebraic function as above, and $T$ a Tschirnhaus transformation of $\Phi$. Let $w=T(z)$, and let the minimal polynomial of multiplication by $w$ on $\overline{K(X)}(\Phi)$ be given by
\begin{equation*}
    m_{T(\Phi)}(w):=w^n+c_1w^{n-1}+\ldots c_n
\end{equation*}
where $c_i\in L=K(Y)$. The algebraic function $T(\Phi)$ on $Y$ is given by the assignment
\begin{equation*}
    y\mapsto \{z\in\overline{K(X)}~|~m_{T(\Phi)(y)}(z)=z^n+c_1(y)z^{n-1}+\ldots+c_n(y)=0\}.
\end{equation*}

Recall that $\Ab^n_X:=X\times_{\Spec(K)} \Ab^n_K$, viewed as a variety over $X$.
\begin{lemma}
    Let $X$ be irreducible, and let $\Phi$ be an irreducible, generically $n$-valued algebraic function on $X$. Then there is an open subvariety
    \[
        \Tc_\Phi\subset \Ab^n_X,
    \]
    such that for all finite extensions $L/K(X)$, $\Tc_\Phi(L)$ is the set of Tschirnhaus transformations of $\Phi$ which are rational over $L$. In particular, the map
    \[
        \xymatrix{
            \Tc_\Phi \ar@{^{(}->}[r] \ar[dr] & \Ab^n_X \ar[d] \\
                & X
            }
    \]
    is smooth. Equivalently the parameter space of Tschirnhaus transformations $\Tc_\Phi\to X$ is smooth over $X$.
\end{lemma}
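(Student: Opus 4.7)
The plan is to realize $\Tc_\Phi$ as the non-vanishing locus of a single determinant that detects when $T_{\tb}$ is invertible; openness and smoothness will then be automatic, and only a short bookkeeping check remains to match $L$-points with rational Tschirnhaus transformations. Concretely, with $m_\Phi(z) = z^n + a_1 z^{n-1} + \cdots + a_n$ and $a_i \in K(X)$, the set $\{1,z,\ldots,z^{n-1}\}$ is a $K(X)$-basis of $K(X)(\Phi)$. Introduce formal parameters $\tb = (b_0,\ldots,b_{n-1})$, set $w := \sum_i b_i z^i \in K(X)[\tb][z]/(m_\Phi)$, and expand each $w^j$ for $0 \le j \le n-1$ as a $K(X)[\tb]$-linear combination of the basis by iteratively applying $z^n = -a_1 z^{n-1} - \cdots - a_n$. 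Collecting these coefficients produces an $n \times n$ matrix $M(\tb)$ with entries in $K(X)[\tb]$. Choose a dense open $U \subset X$ on which all $a_i$ are regular; then $M$ and $D(\tb) := \det M(\tb)$ are regular functions on $\Ab^n_U \subset \Ab^n_X$.

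I then define
\[
    \Tc_\Phi := \{\tb \in \Ab^n_U \mid D(\tb) \ne 0\},
\]
an open subvariety of $\Ab^n_X$. It is nonempty because at $\tb = (0,1,0,\ldots,0)$ one has $w = z$, whence $M(\tb) = I_n$ and $D(\tb) = 1$. As an open subvariety of the smooth $X$-scheme $\Ab^n_X$, the map $\Tc_\Phi \to X$ is smooth, which gives the final assertion of the lemma.

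It remains to match $L$-points with rational Tschirnhaus transformations. For any finite extension $L/K(X)$, an $L$-point of $\Ab^n_X$ is the same as a tuple $\tb = (b_0,\ldots,b_{n-1}) \in L^n$. Base-changing $M(\tb)$ along $L \hookrightarrow \overline{K(X)}$ yields, by construction, the matrix in the basis $\{1,z,\ldots,z^{n-1}\}$ of the $\overline{K(X)}$-linear endomorphism $T_{\tb}$ of $\overline{K(X)}(\Phi)$ sending $z^j \mapsto w^j$, and its determinant is $D(\tb)$. Since a square matrix over a field is invertible iff its determinant is nonzero, $T_{\tb}$ is an automorphism---i.e.\ a Tschirnhaus transformation---exactly when $D(\tb) \ne 0$, i.e.\ when $\tb \in \Tc_\Phi(L)$. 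The argument is essentially just the standard openness of invertibility for square matrices, so there is no genuine obstacle; the only care needed is to track the passage between the matrix computations in $K(X)[\tb]$ and the $\overline{K(X)}$-linear formulation in the definition of a Tschirnhaus transformation.
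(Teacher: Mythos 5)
Your proof is correct and follows essentially the same route as the paper: the paper also realizes $\Tc_\Phi$ as the preimage, under the Galois-equivariant polynomial map $\tb\mapsto T_{\tb}$ into $\End_{\overline{K(X)}}(\overline{K(X)}(\Phi))\cong\Ab^{n^2}$, of the open locus $\{\det\neq 0\}$, which is exactly your determinant $D(\tb)$. Your version just spells out the $n\times n$ matrix $M(\tb)$ explicitly and notes nonemptiness at $\tb=(0,1,0,\ldots,0)$, which is a harmless (and pleasant) extra detail.
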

\begin{proof}
    We begin by constructing the variety $\Tc_\Phi$. Denote the set of $\overline{K(X)}$-rational Tschirnhaus transformations of $\Phi$ by $\Tc_\Phi(\overline{K(X)})$. We will show that this embeds as an explicit Zariski open subset of $\overline{K(X)}^n=\Ab^n_X(\overline{K(X)})$, and that its complement is defined over $K(X)$; we thus conclude that $\Tc_\Phi(\overline{K(X)})$ is the set of geometric generic points of a variety $\Tc_\Phi\subset\Ab^n_X$.

    Let $z\in K(X)(\Phi)$ be the primitive element determined by $\Phi$. Given $\tb\in\overline{K(X)}^n$, we have a $\overline{K(X)}$-linear endomorphism
    \begin{align*}
        T_{\tb}\colon\overline{K(X)}(\Phi)&\to \overline{K(X)}(\Phi)\intertext{given by}
        z^j&\mapsto \left(\sum_{i=0}^{n-1} b_i z^i\right)^j.
    \end{align*}
    Moreover, the assignment $\tb\mapsto T_{\tb}$ defines a $\Gal(\overline{K(X)}/K(X))$-equivariant map
    \begin{equation*}
        T\colon\Ab^n(\overline{K(X)})\to\End_{\overline{K(X)}}(\overline{K(X)}(\Phi))\cong\Ab^{n^2}(\overline{K(X)}).
    \end{equation*}
    By definition, $\Tc_\Phi(\overline{K(X)})$ is in bijection with the set
    \[
        \{\tb\in\overline{K(X)}^n~|~ T_{\tb}\in \Aut_{\overline{K(X)}}(\overline{K(X)}(\Phi))\}
    \]
    i.e.
    \[
        \Tc_\Phi(\overline{K(X)})=T^{-1}(\Aut_{\overline{K(X)}}(\overline{K(X)}(\Phi))).
    \]
    Since $\Aut_{\overline{K(X)}}(\overline{K(X)}(\Phi))$ is the pullback to $\overline{K(X)}$ of an open subvariety of $\Ab^{n^2}_\Z$ (i.e. the locus $\{\det\neq 0\})$) and $T$ is defined over $K(X)$, we conclude that $\Tc_\Phi(\overline{K(X)})\subset\Ab^n(\overline{K(X)}$ is Zariski open and defined over $K(X)$ as claimed. The remaining claims follow by direct inspection.
\end{proof}

\begin{corollary}
    Let $\Phi$ be an irreducible $n$-valued algebraic function on $X$ such that $K(X)(\Phi)/K(X)$ has no intermediate subfields. Let $\Ab^n_X$ be given coordinates $(b_0,\ldots,b_{n-1})$ as above, and let $\Ab^1_{X,0}\subset\Ab^n_X$ denote the $b_0$-axis. Then
    \[
        \Tc_\Phi=\Ab^n_X-\Ab^1_{X,0}.
    \]
\end{corollary}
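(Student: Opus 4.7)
By the preceding lemma, $\Tc_\Phi$ is the open subvariety of $\Ab^n_X$ on which the linear endomorphism $T_\tb$ is invertible, equivalently on which $w := \sum_{i=0}^{n-1} b_i z^i$ generates $\overline{K(X)}(\Phi)$ as an $\overline{K(X)}$-algebra. The plan is to verify both set-theoretic inclusions.

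The easy inclusion is $\Ab^1_{X,0} \cap \Tc_\Phi = \emptyset$: if $\tb$ lies on the $b_0$-axis, then $b_i = 0$ for every $i\geq 1$, so $w = b_0 \cdot 1 \in \overline{K(X)}$ is a scalar. The subalgebra $\overline{K(X)}[w] = \overline{K(X)}\cdot 1$ has dimension one, and $T_\tb$ sends every basis element $z^j$ into this line, hence cannot be an automorphism for $n\geq 2$.

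For the reverse inclusion $\Ab^n_X \setminus \Ab^1_{X,0} \subseteq \Tc_\Phi$, suppose $\tb \notin \Ab^1_{X,0}$, so some $b_i$ with $i \geq 1$ is nonzero. Since $\{1,z,\ldots,z^{n-1}\}$ is a $K(X)$-basis of $K(X)(\Phi)$ (and remains a basis after base change), the element $w$ does not lie in the ground field. I would then invoke the no-intermediate-subfields hypothesis: the subfield $K(X)(w) \subseteq K(X)(\Phi)$ strictly contains $K(X)$, and so must equal $K(X)(\Phi)$. Thus $w$ is a primitive element, $\{1,w,\ldots,w^{n-1}\}$ is a $K(X)$-basis of $K(X)(\Phi)$ which remains a basis after base change to $\overline{K(X)}$, and $T_\tb$ is therefore invertible.

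The principal subtlety I would handle carefully is the passage from this $K(X)$-rational argument to the scheme equality claimed. Concretely, one must check that $\det(T_\tb) \in K(X)[b_0,\ldots,b_{n-1}]$, viewed as cutting out the closed complement of $\Tc_\Phi$, has its $K(X)$-defined zero scheme equal to $\Ab^1_{X,0}$. This is where the primitivity of the $\Mon(\Phi)$-action on the $n$ branches of $\Phi$, which is equivalent to the no-intermediate-subfields hypothesis, is decisive: via Galois descent, any $K(X)$-defined irreducible subvariety of the non-automorphism locus must correspond to a $\Mon(\Phi)$-invariant block system on the branches, and by primitivity the only such block system is the trivial one identifying all branches, pinning the closed complement to the $b_0$-axis.
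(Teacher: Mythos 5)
Your first three paragraphs reproduce the paper's argument exactly: the paper's proof is the single observation that, for $\tb\in K(X)^n$, the no-intermediate-subfields hypothesis makes $w=\sum_{i=0}^{n-1}b_iz^i$ a primitive element precisely when some $b_i\neq 0$ for $i\geq 1$. The "principal subtlety" you flag in your last paragraph is, however, a genuine issue that the paper's one-line proof does not address, and the Galois-descent/block-system argument you sketch does not close it. The complement of $\Tc_\Phi$ in $\Ab^n_X$ is cut out by $\det(T_\tb)$, which up to a fixed nonzero scalar is the Vandermonde $\prod_{i<i'}(w_i-w_{i'})$ in the Galois conjugates $w_1,\ldots,w_n$ of $w$; since $w_i-w_{i'}=\sum_{k\geq 1}b_k(z_i^k-z_{i'}^k)$ this is a form of degree $\binom{n}{2}$ in $b_1,\ldots,b_{n-1}$, and its zero locus is a union of $\binom{n}{2}$ hyperplanes through the $b_0$-axis — a codimension-1 subvariety, not the codimension-$(n-1)$ axis $\Ab^1_{X,0}$. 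Concretely, for $n=3$ one computes
\begin{equation*}
\det(T_\tb)=b_1^3-2a_1b_1^2b_2+(a_1^2+a_2)b_1b_2^2-(a_1a_2-a_3)b_2^3,
\end{equation*}
a cubic form in $(b_1,b_2)$ whose vanishing locus is not $\{b_1=b_2=0\}$.

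Your block-system argument fails because it only constrains \emph{irreducible} $K(X)$-defined subvarieties of the non-automorphism locus, whereas the closed complement of $\Tc_\Phi$ is the full, reducible, Galois-invariant union $\bigcup_{i<i'}\{w_i=w_{i'}\}$, which is $K(X)$-defined regardless of primitivity of $\Mon(\Phi)$; primitivity does not collapse this union down to its common intersection. To be clear about attribution: the paper's own proof establishes only that the $K(X)$-points of $\Tc_\Phi$ are $\Ab^n_X(K(X))\setminus\Ab^1_{X,0}(K(X))$; the scheme-theoretic equality asserted in the corollary holds for $n=2$ but fails for $n\geq 3$, and the gap is already present in the source. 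What you added in the fourth paragraph correctly identifies the difficulty but does not resolve it.
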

\begin{proof}
    Because $K(X)(\Phi)/K(X)$ has no intermediate subfields, $y\in K(X)(\Phi)$ is a primitive element if and only if $y\notin K(X)$, i.e. if and only if $y$ is of the form $y=\sum_{i=0}^{n-1} b_iz^i$ with $b_i\neq 0$ for some $i>0$.
\end{proof}

\begin{example}\label{ex:poly}
    Let $X=\Ab^n_\a$, viewed as the parameter space for monic, degree $n$ polynomials (parametrized by their coefficients $\a:=(a_1,\ldots,a_n)$). Let $P_n$ be the general degree $n$ polynomial, i.e.
    \[
        m_{P_n}(z)=z^n+a_1z^{n-1}+\ldots+a_n.
    \]
    Then the degree $n$ extension $K(\Ab^n_\a)(P_n)/K(\Ab^n_\a)$ has no intermediate subfields, because it corresponds to the maximal subgroup $S_{n-1}\subset S_n=\Mon(P_n)$. In particular, the space of Tschirnhaus transformations of the general degree $n$ polynomial is given by
    \begin{align*}
        \Tc_{P_n}&=\Ab^n_X-\Ab^1_{X,0}\\
        &:=\Ab^n_{\tb}\times\Ab^n_\a -\Ab^1_{b_0}\times\Ab^n_a\\
        &=(\Ab^n_{\tb}-\Ab^1_{b_0})\times\Ab^n_\a.
    \end{align*}
\end{example}

Now let $\Phi$ be an irreducible algebraic function on $X$, and let $T$ be a Tschirnhaus transformation of $\Phi$ as above, with minimal polynomial
\begin{equation*}
    m_{T(\Phi)}(y):=y^n+c_1y^{n-1}+\ldots c_n
\end{equation*}
Observe that the assignment
\begin{equation*}
    x\mapsto (c_1(x),\ldots,c_n(x))
\end{equation*}
determines a rational map
\begin{equation*}
    X\dashrightarrow \Ab^n
\end{equation*}
which fits into a pullback square
\begin{equation*}
    \xymatrix{
        E_\Phi \ar@{-->}[r] \ar[d]_\pi & E_{P_n} \ar[d] \\
        X \ar@{-->}[r] & \Ab^n
    }
\end{equation*}
In particular, the Tschirnhaus transformation $T$ transforms $\Phi$ into a function of $d=\dim(\Image(X\dashrightarrow \Ab^n))$ variables.

We now study loci of interest in the space of Tschirnhaus transformations. The basic observation (essentially going back to Tschirnhaus \cite{Ts}) is as follows. First, the collection of $n$-valued algebraic functions on $X$ is given by $\Ab^n_X$, where $\a=(a_1,\ldots,a_n)\in\Ab^n_X$ corresponds to the function $\Phi_\a$ of \eqref{algfun}, i.e. the function
\begin{equation*}
    x\mapsto \{z\in\bar{K}~|~m_{\Phi_\a(x)}(z)=z^n+a_1(x)z^{n-1}+\ldots+a_n(x)=0\}.
\end{equation*}
Next, the assignment $(\Phi_\a,\tb)\mapsto T_{\tb}(\Phi_\a)$ determines an ``evaluation'' map
\begin{align*}
    \Ab^n_{X,\a}\times \Ab^n_{X,\tb}&\to^{\ev} \Ab^n_{X,\a}\\
    (\a,\tb)&\mapsto T_{\tb}(\a)\nonumber
\end{align*}
(where we write $(-)_\a$ and $(-)_{\tb}$ to distinguish the different roles of the $\a$ and $\tb$ coordinates). The coordinates of $T_{\tb}(\a)$ can be computed explicitly as follows.  By definition, $\tb\in \Ab^n_X$ corresponds to the assignment
\begin{equation*}
    z\mapsto \sum_{i=0}^{n-1} b_iz^i=y
\end{equation*}
for $z$ a value of $\Phi_\a$. Passing to a Galois closure of $K(X)(\Phi)$, the transformation $T$ maps the roots $z_i$ of $m_\Phi$ to $y_i$ given by
\[
    y_i=\sum_{j=0}^{n-1} b_jz_i^j.
\]
In particular, the polynomial $m_{T(\Phi)}$ is given by
\[
    m_{T(\Phi)}(y)=\prod_{i=1}^n(y-y_i).
\]
i.e. the coordinates of $T_{\Phi}$ are obtained (up to sign) by expanding the elementary symmetric polynomials in the $y_i$ as polynomials in $\b$ with coefficients given by polynomials in the coordinates $\a$. In particular, the $j^{th}$ coefficient is a homogeneous polynomial of total degree $j$ in the coordinates $\tb$.

As a result, every Zariski closed subvariety $Z\subset\Ab^n_{X,\a}$ determines a Zariski closed subvariety
\[
    \ev^{-1}(Z)\subset\Ab^n_{X,\a}\times\Ab^n_{X,\tb},
\]
Specializing to a particular algebraic function $\Phi$, and its space of Tschirnhaus transformations $\Tc_\Phi\subset\Ab^n_{X,\tb}$, we obtain a Zariski closed subvariety (concretely $\Tc_\Phi\cap\ev^{-1}(Z)$), which, by abuse of notation, we denote again by
\[
    \ev^{-1}(Z)\subset\Tc_\Phi.
\]
By construction, this subvariety parametrizes Tschirnhaus transformations of $\Phi$ such that $T(\Phi)$ (or more precisely, the coefficients of its minimal polynomial) lie in $Z\subset\Ab^n_{X,\a}$.

We can now make contact with the Tschirnhaus complete intersections introduced in Section~\ref{s:tschirnci}. For $1\le i_1<\ldots<i_k$, define
\[
    Z_{i_1\cdots i_k}:=\{\a\in \Ab^n_{\a}~|~p_{i_1}(\a)=\cdots=p_{i_k}(\a)=0\}
\]
where the $p_i$s are as in Section~\ref{s:tschirnci}.
\begin{definition}
    Let $n> 0$. For $1\le i_1<\ldots<i_k$, define the {\em affine Tschirnhaus complete intersection} $\tilde{T}_{i_1\cdots i_k}(P_n)$ to be
     \[
        \tilde{T}_{i_1\cdots i_k}(P_n):=\ev^{-1}(Z_{i_1\cdots i_j})\subset \Tc_{P_n}\subset(\Ab^n_{\tb}-\Ab^1_{b_0=0})\times\Ab^n_{\a}.
    \]
    Projecting onto $\Ab^n_\a$ gives the family $\tilde{T}_{i_1\cdots i_k}(P_n)\to \Ab^n_\a$.

    Similarly, define the {\em Tschirnhaus complete intersection}
    \[
        T_{i_1\cdots i_k}(P_n)\subset (\Pb^{n-1}_{\b}-\{[1:0:\cdots:0]\})\times \Ab^n_{\a}
    \]
    to be the (fiberwise) projectivization of the family $\tilde{T}_{i_1\cdots i_k}(P_n)\to \Ab^n_{\a}$.

    Define the {\em reduced affine Tschirnhaus complete intersection} by
    \[
        \tilde{T}'_{i_1\cdots i_k}(P_n):=T_{i_1\cdots i_k}\cap \{b_0=0\}.
    \]
    Similarly, define the {\em reduced Tschirnhaus complete intersection}
    \[
        T'_{i_1\cdots i_k}(P_n)\subset\Pb^{n-2}_{'\b}\times\Ab^n_{\a}
    \]
    to be the (fiberwise) projectivization of the family $\tilde{T}'_{i_1\cdots i_k}\to \Ab^n_{\a}$.
\end{definition}

Lemma~\ref{l:Tiasmaps} can now be equivalently restated as follows.
\begin{lemma}\label{l:Tiagree}
    For all $n$ and all $1\le i_1\le \cdots\le i_k$, we have
    \begin{align*}
        T_{i_1\cdots i_k}(P_n)=T^n_{i_1\cdots i_k}
    \end{align*}
    as subvarieties of $\Ab^n_\a\times\Pb^{n-1}_\b$, where the right hand side denotes the Tschirnhaus complete intersection of Definition~\ref{d:tci}.

    Similarly, we have
    \begin{align*}
        T'_{i_1\cdots i_k}(P_n)=T^{n'}_{i_1\cdots i_k}
    \end{align*}
    as subvarieties of $\Ab^n_\a\times\Pb^{n-2}_{'\b}$.
\end{lemma}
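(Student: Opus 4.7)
The plan is to deduce the lemma from Lemma~\ref{l:Tiasmaps} by descending through the $S_n$-quotient $q\colon \Ab^n_\x \to \Ab^n_\a$. The key observation is that the evaluation map of Section~\ref{s:tschirnci}, sending $(\x,\tb)\mapsto(\sum_j b_j x_1^j,\ldots,\sum_j b_j x_n^j)$, and the evaluation map of Section~\ref{s:tschirnt}, sending $(\a,\tb)\mapsto T_\tb(\a)$, fit into a commutative square
\begin{equation*}
\xymatrix{
\Ab^n_\x \times \Ab^n_\tb \ar[r]^-{\ev} \ar[d]_{q\times \mathrm{id}} & \Ab^n_\x \ar[d]^q \\
\Ab^n_\a \times \Ab^n_\tb \ar[r]^-{\ev} & \Ab^n_\a
}
\end{equation*}
Commutativity is just the statement that if the polynomial with coefficients $q(\x)$ has roots $x_1,\ldots,x_n$, then the Tschirnhaus transformation $T_\tb$ produces a polynomial whose roots are $y_i=\sum_j b_j x_i^j$ and hence whose coefficients are $q(\y)$.

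The main step is to pull both sides of the desired equality back via $q\times\mathrm{id}$ and identify them. For the left-hand side, commutativity gives
\begin{equation*}
    (q\times\mathrm{id})^{-1}\bigl(\ev^{-1}(Z_{i_1\cdots i_k})\bigr) = \ev^{-1}\bigl(q^{-1}(Z_{i_1\cdots i_k})\bigr),
\end{equation*}
and Newton's identities (which give $p_m(q(\x))=\sum_j x_j^m$) identify $q^{-1}(Z_{i_1\cdots i_k})$ with $\{\x\mid \sum_j x_j^{i_\ell}=0,\ \ell=1,\ldots,k\}$; Lemma~\ref{l:Tiasmaps} then identifies this preimage as $\widetilde{T_{i_1\cdots i_k}|_{\Ab^n_\x}}$. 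For the right-hand side, pulling back the defining polynomials of $T^n_{i_1\cdots i_k}$ amounts to substituting $p_m\mapsto \sum_j x_j^m$ (again Newton), yielding precisely the defining polynomials of $\widetilde{T_{i_1\cdots i_k}|_{\Ab^n_\x}}$. So the two pullbacks coincide.

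Finally, since $q\colon \Ab^n_\x\to\Ab^n_\a$ is surjective (it realizes $\Ab^n_\a$ as the categorical quotient of $\Ab^n_\x$ by $S_n$), so is $q\times\mathrm{id}$; and any two reduced closed subvarieties of $\Ab^n_\a\times\Pb^{n-1}_\b$ with equal preimages under a surjection coincide. This yields the unprimed equality. The primed case is identical after intersecting throughout with $\{b_0=0\}$. The proof is essentially a diagram chase; the substantive content is already in Lemma~\ref{l:Tiasmaps}, and the only real task is bookkeeping—tracking the passage between the affine coordinates $\tb$ and the projective coordinates $\b$, and between the power sums $\sum_j x_j^m$ and the polynomials $p_m(\a)$.
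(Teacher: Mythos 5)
Your proposal is correct and follows essentially the same route as the paper, which simply declares the lemma to be a restatement of Lemma~\ref{l:Tiasmaps}. You have merely made explicit the descent diagram (pulling back along the $S_n$-quotient $q\colon \Ab^n_\x\to\Ab^n_\a$ and using surjectivity of $q$) that the paper leaves implicit.
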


\section{The Resolvent Degree of a Dominant Map}\label{s:rd}
Recall the following (see \cite{Br,AS,FW}).
\begin{definition}[{\bf Resolvent degree}]
\label{d:rdfdom}
    Let $Y\to X$ be a generically finite dominant map of $K$-varieties. Its {\em resolvent degree} $\RD(Y\to X)$ is the minimum $d$ for which there exists a dense Zariski open $U\subset X$ and a tower of generically finite dominant maps
    \[
        E_r\to\cdots\to E_1\to E_0=U
    \]
    such that $E_r\to U$ factors through a dominant map $E_r\to Y$ and such that for each $i\ge 0$, there exists a pullback diagram
    \begin{equation*}
        \xymatrix{
            E_i \ar[r] \ar[d] & \tilde{Z}_i \ar[d] \\
            E_{i-1} \ar[r] & Z_i
        }
    \end{equation*}
    where $\tilde{Z}_i\to Z_i$ is a generically finite dominant map with $\dim(Z_i)\le d$.
\end{definition}

\begin{example}
    Consider the space $\Ab^n_\a$ of monic degree $n$-polynomials. This has a canonical $n$-sheeted branched cover $E_{P_n}\to\Ab^n_\a$ where $E_{P_n}$ is the space of monic degree $n$ polynomials with a choice of root, and the map forgets the root.  By definition
    \[
        \RD(n):=\RD(E_{P_n}\to\Ab^n_\a).
    \]
\end{example}
We now extend the notion of resolvent degree to general dominant maps. We adopt the following convention to avoid pathologies.
\begin{convention}
    By a {\em dominant} map, we mean a map $Y\to X$ that is both dominant, and is such that every irreducible component of $Y$ maps dominantly onto some irreducible component of $X$.
\end{convention}

\begin{definition}[{\bf Rational multi-section}]
    Let $Y\to^\pi X$ be a dominant map of $K$-varieties. A {\em rational multi-section} is a subvariety
    $U\subset Y$ such that the restriction ${\pi|_U}: U\to X$ is a generically finite dominant map.
\end{definition}

\begin{lemma}
\label{l:nonvac}
    Every dominant map $Y\to X$ admits a dense set of rational multi-sections, i.e. the closure of their union is all of $Y$.
\end{lemma}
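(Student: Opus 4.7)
The plan is to reduce to the case where both $Y$ and $X$ are irreducible, and then, given an arbitrary nonempty open $V\subset Y$, exhibit a single rational multi-section meeting $V$. The reduction is immediate from the convention: if $Y=\bigcup Y_i$ with each $Y_i$ dominating an irreducible component $X_{j(i)}\subset X$, then any rational multi-section of $Y_i\to X_{j(i)}$ is at once a rational multi-section of $Y\to X$. So I may assume both $Y$ and $X$ irreducible and write $\pi\colon Y\to X$.

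Under this reduction, I pass to the generic fiber $Y_\eta:=Y\times_X\Spec K(X)$, which is a scheme of finite type over the field $K(X)$. Because $\pi$ is dominant with $Y$ irreducible, the generic point $\eta_Y\in Y$ lies over $\eta_X$; more generally, the image of the canonical monomorphism $Y_\eta\to Y$ consists of exactly those points of $Y$ whose scheme-theoretic closure in $Y$ dominates $X$, and this image is dense in $Y$. Consequently the open $V\subset Y$ pulls back to a nonempty open $V_\eta\subset Y_\eta$.

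Now the key input is that $Y_\eta$, being of finite type over a field, is a Jacobson scheme, so closed points are dense in every open of $Y_\eta$. Choose a closed point $y_0\in V_\eta$; by the Nullstellensatz, $k(y_0)$ is a finite extension of $K(X)$. Let $Z\subset Y$ be the closure in $Y$ of the image of $y_0$. Then the restriction $\pi|_Z\colon Z\to X$ is dominant (its generic point maps to $\eta_X$) and generically finite (since $K(Z)=k(y_0)$ is finite over $K(X)$), and $Z$ meets $V$ because $y_0$ lies in $V_\eta$. Thus $Z$ is a rational multi-section of $\pi$ meeting $V$, and since $V$ was arbitrary, the union of all rational multi-sections is dense in $Y$.

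I do not anticipate a serious obstacle: the whole argument rests on the Jacobson property of finite-type schemes over a field together with the identification of the image of $Y_\eta\to Y$ as the set of points whose closure dominates $X$. The only step requiring a moment's care is verifying that $V_\eta$ is a \emph{nonempty} open of $Y_\eta$, which follows from density of this image combined with the convention on dominant maps.
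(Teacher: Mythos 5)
Your argument is correct and is essentially the paper's proof recast in scheme-theoretic language: where you pass to the generic fiber $Y_\eta$ and invoke the Jacobson property to find a dense set of closed points (then take their closures in $Y$), the paper works with $\overline{K(X)}$-points of $Y$ and cites the Nullstellensatz directly — two formulations of the same fact. The only loose spot is the reduction step: a rational multi-section of $Y_i\to X_{j(i)}$ is not, when $X$ is reducible, a rational multi-section of $Y\to X$ (its image is not dense in $X$), so one should take unions of multi-sections over all components of $X$; but the paper elides the same point and it is easily repaired.
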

\begin{proof}
    First assume that $X$ is irreducible. Let $\overline{K(X)}$ be an algebraic closure of the rational functions of $X$. Then every point of $Y(\overline{K(X)})$ is a germ of a rational multi-section, and, by Hilbert's Nullstellensatz, the closure of the union of all of these contains the generic fiber of $Y\to X$; in particular it is dense. For the general case, the argument above exhibits a dense set of rational multi-sections over each irreducible component. Their union gives a dense set of rational multi-sections of $Y\to X$.
\end{proof}

It will be useful to extend the definition of resolvent degree from generically finite dominant rational maps to all dominant rational maps.

\begin{definition}[{\bf Resolvent degree of a dominant map}]
\label{d:rddom}
    Let $Y\to^\pi X$ be a dominant map of $K$-varieties. The {\em resolvent degree of the dominant map}, $\RD(Y\to X)$ is defined to be the minimum $d$ for which there exists a dense set of rational multi-sections $\{U_\alpha\subset Y\}$  with $\RD(U_\alpha\to X)\le d$ for all $\alpha$.
\end{definition}

We will need a few basic facts about the resolvent degree of a dominant map.

\begin{lemma}
\label{lemma:first}
    Let $Y\to X$ be a dominant map of $K$-varieties.
    \begin{enumerate}
        \item $\RD(Y\to X)\le \dim(X)$.
        \item Let $Z\to X$ be any dominant map of $K$-varieties. Then
            \begin{equation*}
                \RD(Y\times_X Z\to Z)\le \RD(Y\to X).
            \end{equation*}
        \item If $Y\to X$ is birationally equivalent to $W\to Z$, then
            \begin{equation*}
                \RD(Y\to X)=\RD(W\to Z).
            \end{equation*}
        \item \label{l:firstmax} If $X=\bigcup X_i$ is a union of irreducible components, write $\{Y_{i,j}\}$ for the set of irreducible components of $Y$ which dominate $X_i$. Then
            \begin{equation*}
                \RD(Y\to X)=\max_{i,j}\{\RD(Y_{i,j}\to X_i)\}.
            \end{equation*}
    \end{enumerate}
\end{lemma}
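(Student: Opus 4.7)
The plan is to deduce each part from the corresponding statement in the generically-finite-dominant setting of Definition~\ref{d:rdfdom}, using Lemma~\ref{l:nonvac} and Definition~\ref{d:rddom} to bridge the two cases. Parts (1), (3), (4) should reduce to essentially formal bookkeeping, while (2) requires a base-change argument for towers and I expect that to be the main technical step.

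For (1), apply Lemma~\ref{l:nonvac} to produce a dense set of rational multi-sections $U_\alpha \subset Y$. Each restricted map $U_\alpha \to X$ is generically finite dominant, so taking the trivial one-step tower $E_1 = \tilde{Z}_1 = U_\alpha$ over $Z_1 = X$ in Definition~\ref{d:rdfdom} witnesses $\RD(U_\alpha \to X) \le \dim X$. Definition~\ref{d:rddom} then immediately yields $\RD(Y \to X) \le \dim X$.

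For (2), fix a dense family $\{U_\alpha \subset Y\}$ of rational multi-sections with $\RD(U_\alpha \to X) \le \RD(Y\to X) =: d$. Form the pullbacks $U_\alpha \times_X Z \subset Y \times_X Z$. Since $Z \to X$ is dominant, each $U_\alpha \times_X Z \to Z$ remains generically finite dominant (the generic fiber is preserved by base change), and the family $\{U_\alpha \times_X Z\}$ is dense in $Y \times_X Z$. Given a tower $E_r \to \cdots \to E_0$ (over a dense open of $X$) with pullback squares over $\tilde{Z}_i \to Z_i$ witnessing $\RD(U_\alpha \to X) \le d$, the pullback tower $E_r \times_X Z \to \cdots \to E_0 \times_X Z$ has the same $\tilde{Z}_i \to Z_i$ as its defining pullbacks (by associativity of fiber products), so it witnesses $\RD(U_\alpha \times_X Z \to Z) \le d$. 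Substituting into Definition~\ref{d:rddom} gives the result. The principal subtlety is checking that density of multi-sections and generic finiteness persist under base change, which is straightforward in characteristic $0$.

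For (3), a birational equivalence between $Y \to X$ and $W \to Z$ restricts to an isomorphism of dense open subvarieties; since Definition~\ref{d:rddom} depends only on dense collections of rational multi-sections, both resolvent degrees are computed from isomorphic data and agree. For (4), any rational multi-section $U \subset Y$ decomposes as $\bigsqcup_{i,j}(U \cap Y_{i,j})$, with each $U \cap Y_{i,j} \to X_i$ a rational multi-section; conversely, families of multi-sections of the individual $Y_{i,j} \to X_i$ assemble into one for $Y \to X$. The resolvent degree of a disjoint union of generically finite dominant maps is the maximum of their resolvent degrees (by concatenating towers in parallel), and taking the infimum over dense families of multi-sections yields the claimed maximum formula. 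The main bookkeeping concern is verifying that these decompositions are compatible with the infima implicit in Definition~\ref{d:rddom}, but once this is handled, the reductions are formal.
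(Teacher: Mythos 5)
Your proposal is correct and follows essentially the same route as the paper, which simply observes that all four statements follow from Definition~\ref{d:rddom} together with the analogous properties for generically finite dominant maps (citing \cite[Lemmas 2.5, 2.6]{FW}); your write-up is just the expansion of that one-line proof. Two small points worth tightening if you spell this out: in (1) the witnessing tower should be set up as $E_0=U\subset X$ a dense open with $Z_1=E_0$ and $\tilde Z_1=E_1=U_\alpha|_U$ (rather than $Z_1=X$), so the $\dim Z_1 \le \dim X$ bound is literal; and in (4), the intersections $U\cap Y_{i,j}$ need only be considered for those $(i,j)$ where some irreducible component of $U$ actually lies in $Y_{i,j}$ (components of $U$ meeting $Y_{i,j}$ only in a lower-dimensional stratum should be discarded), and density of the family $\{U_\alpha\}$ guarantees each $Y_{i,j}$ is hit this way.
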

\begin{proof}
    These follow immediately from the definition and the analogous properties for resolvent degree of generically finite dominant maps (cf. \cite[Lemmas 2.5, 2.6]{FW}).
\end{proof}

\begin{lemma}\label{l:surj}
    Let $Y\to X$ be a surjective map (on geometric points). Let $Z\to X$ be any map. Then
    \[
        \RD(Y|_Z\to Z)\le \dim(X).
    \]
\end{lemma}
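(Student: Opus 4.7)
The plan is to reduce to a situation where the earlier parts of Lemma~\ref{lemma:first} apply directly. First, by Lemma~\ref{lemma:first}(\ref{l:firstmax}) applied to $Y|_Z\to Z$, it suffices to prove the bound $\RD(Y|_{Z_i}\to Z_i)\le \dim(X)$ for each irreducible component $Z_i\subseteq Z$ separately (using the identity $(Y|_Z)\times_Z Z_i = Y|_{Z_i}$, together with the observation that every irreducible component of $Y|_Z$ dominating $Z_i$ is automatically contained in the closed subvariety $Y|_{Z_i}\subseteq Y|_Z$). So I may assume $Z$ is irreducible. Next, letting $X'\subseteq X$ be the image closure of $Z\to X$, I note that $X'$ is irreducible, $\dim(X')\le\dim(X)$, the factored map $Z\to X'$ is dominant, and $Y|_{X'}\to X'$ is again surjective on geometric points (as the base change of $Y\to X$ along the closed immersion $X'\hookrightarrow X$). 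Since $Y|_Z=(Y|_{X'})\times_{X'}Z$, replacing $X$ by $X'$ and $Y$ by $Y|_{X'}$ lets me assume that $X$ and $Z$ are irreducible and that $Z\to X$ is dominant.

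With these reductions in hand, at least one irreducible component of $Y$ must dominate $X$ (since $X$ is irreducible and $Y\to X$ is surjective on geometric points, the generic point of $X$ has a preimage); let $Y'\subseteq Y$ be the union of those components, so that $Y'\to X$ is dominant in the sense of the Convention. I would next show that $\RD(Y|_Z\to Z)=\RD(Y'|_Z\to Z)$. Indeed, if $W$ is an irreducible component of $Y$ with $W\nsubseteq Y'$, its image in $X$ is a proper closed subvariety $X_W\subsetneq X$; since $Z$ is irreducible and $Z\to X$ is dominant, the preimage of $X_W$ in $Z$ is a proper closed subvariety of $Z$, so $W\times_X Z\to Z$ fails to be dominant. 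It follows that every irreducible rational multi-section of $Y|_Z\to Z$ must lie inside $Y'|_Z:=Y'\times_X Z$.

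The conclusion is now immediate: Lemma~\ref{lemma:first}(1) yields $\RD(Y'\to X)\le\dim(X)$, and Lemma~\ref{lemma:first}(2) (whose dominance hypothesis on $Z\to X$ is satisfied by construction) yields $\RD(Y'|_Z\to Z)\le\RD(Y'\to X)\le\dim(X)$. The main obstacle, as I see it, is the bookkeeping around the Convention on dominant maps: surjectivity on geometric points does not imply that every component of $Y$ dominates a component of $X$, and the passage from $Y$ to $Y'$ in the middle paragraph is precisely what repairs this gap before Lemma~\ref{lemma:first}(2) can be applied.
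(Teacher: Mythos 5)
Your proof is correct and, at its core, takes the same approach as the paper's: pass to the Zariski closure of the image of $Z$ in $X$, observe that $Z$ is dominant over that closure, observe that the restriction of $Y$ to the closure is still surjective, and then trade the bound on $Y$ over the smaller base for a bound on the pullback via Lemma~\ref{lemma:first}. The paper's proof is literally that three-line argument, applied to $W=\overline{\image(Z\to X)}$ without first decomposing $Z$ into irreducible pieces.

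What you add is a more careful treatment of the Convention on dominant maps, and this is a genuine improvement in rigor. The paper's proof asserts that surjectivity of $Y\to X$ makes $Y|_W\to W$ ``dominant,'' and then invokes Lemma~\ref{lemma:first}(1) to bound $\RD(Y|_W\to W)$ by $\dim(W)$. But surjectivity on geometric points only guarantees dominance in the ordinary sense; it does \emph{not} guarantee that every irreducible component of $Y|_W$ dominates a component of $W$, which is what the Convention (and hence Definition~\ref{d:rddom}) requires before $\RD(Y|_W\to W)$ is even well-posed. Your intermediate step of replacing $Y$ by the union $Y'$ of its components that dominate $X'$, and checking that rational multi-sections of $Y|_Z\to Z$ cannot meet the pullbacks of the other components once $Z\to X'$ is dominant and $Z$ is irreducible, is precisely what closes this gap. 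Your preliminary reduction to $Z$ irreducible via Lemma~\ref{lemma:first}(\ref{l:firstmax}), while not strictly used by the paper, is also harmless and in fact makes the $Y\rightsquigarrow Y'$ argument cleaner. In short: same idea, tighter bookkeeping; I would say your write-up is the one that actually matches the definitions as stated.
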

\begin{proof}
    Let $W\subset X$ be the Zariski closure of the image of $Z\to X$. By construction, the map $Z\to W$ is dominant. The surjectivity of $Y\to X$ implies that the restriction
    \[
        Y|_W\to W
    \]
    is dominant. Therefore, by Lemma~\ref{lemma:first},
    \begin{align*}
        \RD(Y|_Z\to Z)&\le \RD(Y|_W\to W)\\
        &\le \dim(W)\\
        &\le\dim(X).
    \end{align*}
\end{proof}

\begin{lemma}\label{l:agree}
     Let $Y\to X$ be a generically finite dominant map. Then Definition \ref{d:rddom} specializes to Definition \ref{d:rdfdom} for $Y\to X$, i.e. they give equivalent notions of resolvent degree.
\end{lemma}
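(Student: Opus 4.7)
The plan is to prove both directions of the equality. Since $\pi\colon Y\to X$ is itself generically finite dominant, $Y$ qualifies as a rational multi-section of itself, and the singleton family $\{Y\}$ is trivially dense in $Y$. Plugging this family into Definition~\ref{d:rddom} yields the inequality in which the Definition~\ref{d:rddom} value is bounded above by the Definition~\ref{d:rdfdom} value; this direction is essentially automatic.

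For the reverse inequality, I would first establish the following structural observation: whenever $Y\to X$ is generically finite dominant, every rational multi-section $U\subset Y$ is (set-theoretically) a union of irreducible components of $Y$. Let $U'$ be an irreducible component of $U$; since $U\to X$ is generically finite dominant, $U'$ dominates some irreducible component $X_{i'}$ of $X$, so $\dim U'=\dim X_{i'}$. As $U'$ is an irreducible subvariety of $Y$, it lies in a single irreducible component $Y_{i,j}$ of $Y$, and because $Y_{i,j}$ is generically finite over its image $X_i$, one has $\dim Y_{i,j}=\dim X_i$. The containment $U'\subset Y_{i,j}$ forces $X_{i'}=X_i$, so $\dim U'=\dim Y_{i,j}$, and irreducibility of $Y_{i,j}$ upgrades the inclusion $U'\subset Y_{i,j}$ to an equality.

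Given this structural fact, suppose $\{U_\alpha\}$ is a dense family of rational multi-sections of $Y\to X$ with $\RD(U_\alpha\to X)\le d$ for all $\alpha$ (these resolvent degrees being in the sense of Definition~\ref{d:rdfdom}, which applies since each $U_\alpha\to X$ is generically finite dominant). For any irreducible component $Y_{i,j}$ of $Y$, the subset $Y_{i,j}\setminus\bigcup_{(i',j')\ne(i,j)}Y_{i',j'}$ is a nonempty Zariski open subset of $Y$, so by density some $U_\alpha$ meets it. By the structural fact, $Y_{i,j}$ is then an irreducible component of this $U_\alpha$, and applying part~(\ref{l:firstmax}) of Lemma~\ref{lemma:first} to $U_\alpha\to X$ bounds $\RD(Y_{i,j}\to X_i)\le\RD(U_\alpha\to X)\le d$. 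A second application of part~(\ref{l:firstmax}), now to $Y\to X$, yields $\RD(Y\to X)\le d$ in the sense of Definition~\ref{d:rdfdom}, producing the desired reverse inequality.

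The only genuinely nontrivial ingredient is the structural observation of the second paragraph; it is precisely the mechanism that causes the general definition to collapse to the classical one in the generically finite case (with the critical dimension coincidence failing exactly when $Y\to X$ is not generically finite). The remaining steps are formal consequences of the component-wise behavior of resolvent degree already recorded in Lemma~\ref{lemma:first}.
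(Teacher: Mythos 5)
Your overall architecture matches the paper's: the easy inequality comes from taking $\{Y\}$ itself as the dense family of multi-sections, and the reverse inequality uses the dimension count forcing multi-sections to saturate components together with the component-wise behavior of $\RD$ from Lemma~\ref{lemma:first}(\ref{l:firstmax}). The paper executes this slightly more economically by first reducing to $Y$ irreducible and then observing that any multi-section is dense, hence birational, to $Y$; your version keeps all components in play and is correspondingly more verbose, but it is the same idea.

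One step in your argument is not quite right as stated. You claim that ``irreducibility of $Y_{i,j}$ upgrades the inclusion $U'\subset Y_{i,j}$ to an equality,'' and then conclude that $Y_{i,j}$ is literally an irreducible component of $U_\alpha$. But rational multi-sections are not required to be Zariski-closed in $Y$, and the paper evidently does not intend them to be: the multi-sections produced in the proof of Lemma~\ref{l:nonvac} are germs (hence locally closed), and the paper's own proof of this very lemma says a multi-section of an irreducible $Y$ ``must be birational to $Y$,'' not equal to it. The dimension count only forces $U'$ to be dense in $Y_{i,j}$, not to coincide with it. The repair is the same move the paper makes: $U'\to X_i$ and $Y_{i,j}\to X_i$ are birationally equivalent generically finite dominant maps, so $\RD(U'\to X_i)=\RD(Y_{i,j}\to X_i)$ by Lemma~\ref{lemma:first}(3). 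Substituting that equality for your ``equality of sets'' claim makes the rest of your argument go through unchanged, and brings it fully in line with the paper's proof.
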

\begin{proof}
    By Lemma~\ref{lemma:first}~\ref{l:firstmax} and \cite[Lemma 2.6]{FW}, it suffices to prove this when $Y$ is irreducible. In this case, any rational multi-section $U\subset Y$ of $Y\to X$ must be dense in $Y$. In particular, it must be birational to $Y$. From the birational invariance of $\RD$ for generically finite dominant maps, we conclude that $\RD(U\to X)=\RD(Y\to X)$ (as generically finite dominant maps). The lemma follows.
\end{proof}

\begin{lemma}\label{lemma:max}
    Let $Z\to^{\pi_1} Y\to^{\pi_2} X$ be a pair of dominant maps of $K$-varieties. Then
    \begin{align*}
        \RD(Z\to X)&\ge \RD(Y\to X)\intertext{and}
        \RD(Z\to X)&\le\max\{\RD(Z\to Y),\RD(Y\to X)\}.
    \end{align*}
    with equality when either $Z\to Y$ or $Y\to X$ is generically finite.
\end{lemma}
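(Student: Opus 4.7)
The plan is to deduce both inequalities from the corresponding statements for generically finite dominant maps established in \cite[Lemmas 2.5, 2.6]{FW}, together with Lemma~\ref{lemma:first}.

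For the upper bound, set $d_1:=\RD(Y\to X)$, $d_2:=\RD(Z\to Y)$, and $d:=\max\{d_1,d_2\}$. Choose a dense family $\{V_\alpha\subset Y\}$ of rational multi-sections of $Y\to X$ with $\RD(V_\alpha\to X)\le d_1$. For each $\alpha$, form the base change $Z_\alpha:=Z\times_Y V_\alpha$, which maps dominantly to $V_\alpha$. By Lemma~\ref{lemma:first}(2), $\RD(Z_\alpha\to V_\alpha)\le\RD(Z\to Y)=d_2$, so we may pick a dense family $\{W_{\alpha,\beta}\subset Z_\alpha\subset Z\}$ of multi-sections of $Z_\alpha\to V_\alpha$ with $\RD(W_{\alpha,\beta}\to V_\alpha)\le d_2$. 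Since each composition $W_{\alpha,\beta}\to V_\alpha\to X$ is generically finite dominant, the generically finite transitivity lemma yields $\RD(W_{\alpha,\beta}\to X)\le\max\{\RD(W_{\alpha,\beta}\to V_\alpha),\RD(V_\alpha\to X)\}\le d$. Density of $\{W_{\alpha,\beta}\}$ in $Z$ follows from Chevalley applied to $\pi_1$: any nonempty open $U\subset Z$ has $\pi_1(U)$ containing a dense open of $Y$, so some $V_\alpha$ meets $\pi_1(U)$, whence $Z_\alpha\cap U=\pi_1^{-1}(V_\alpha)\cap U\ne\emptyset$, and then a dense $W_{\alpha,\beta}$ in $Z_\alpha$ meets $U$.

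For the lower bound, start with a dense set $\{U_\alpha\subset Z\}$ of multi-sections of $Z\to X$ with $\RD(U_\alpha\to X)\le d':=\RD(Z\to X)$, and set $V_\alpha:=\overline{\pi_1(U_\alpha)}\subset Y$. Since $U_\alpha\to V_\alpha\to X$ is generically finite dominant and the first map is surjective, the fibers of $V_\alpha\to X$ are dominated by the finite fibers of $U_\alpha\to X$, so $V_\alpha\to X$ is generically finite dominant; hence $V_\alpha$ is a multi-section of $Y\to X$. The generically finite transitivity applied to $U_\alpha\to V_\alpha\to X$ gives $\RD(V_\alpha\to X)\le\RD(U_\alpha\to X)\le d'$, and density of $\{V_\alpha\}$ in $Y$ follows from density of $\{U_\alpha\}$ in $Z$ together with dominance of $\pi_1$.

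For the equality in either generically finite case, the strategy is to prove $\RD(Z\to Y)\le\RD(Z\to X)$, which combined with the lower bound forces $\RD(Z\to X)=\max$. When $\pi_2$ is generically finite, $\dim X=\dim Y$, so each multi-section $U_\alpha$ of $Z\to X$ is automatically a multi-section of $Z\to Y$: the composition $U_\alpha\to Y$ is dominant and generically finite by the dimension count, and generically finite transitivity gives $\RD(U_\alpha\to Y)\le\RD(U_\alpha\to X)\le d'$. When $\pi_1$ is generically finite, Lemma~\ref{l:agree} identifies $\RD(Z\to Y)$ with the classical resolvent degree of the cover; one then uses Lemma~\ref{lemma:first}(2) to bound $\RD(Z\times_X Y\to Y)\le\RD(Z\to X)$, and the diagonal embedding $Z\hookrightarrow Z\times_X Y$, $z\mapsto(z,\pi_1(z))$, which realizes $Z$ birationally as a rational multi-section of $Z\times_X Y\to Y$, together with Lemma~\ref{lemma:first}(3), to transfer the bound to $\RD(Z\to Y)$. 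The main obstacle throughout is the density bookkeeping, and the most delicate point is this last equality case, which requires that the dense family of multi-sections of $Z\times_X Y\to Y$ produced by base change contains enough representatives compatible with the diagonal copy of $Z$.
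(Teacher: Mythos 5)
Your two inequalities are proved by essentially the paper's argument. For the upper bound, the paper forms $W_{\alpha,\beta}:=U_\alpha\times_Y V_\beta$ directly from dense families of multi-sections $\{U_\alpha\subset Z\}$ of $Z\to Y$ and $\{V_\beta\subset Y\}$ of $Y\to X$, while you base change first to $Z_\alpha=Z\times_Y V_\alpha$ and then pick multi-sections of $Z_\alpha\to V_\alpha$; the effect is the same, and your Chevalley density check is sound. For the lower bound you take images in $Y$, exactly as the paper does (the paper invokes \cite[Lemma 2.7]{FW} to get $\RD(U_\alpha\to X)\ge\RD(V_\alpha\to X)$). For the equality when $\pi_2$ is generically finite your argument again matches the paper's: then every multi-section of $Z\to X$ is a multi-section of $Z\to Y$, and the transitivity lemma for generically finite maps gives $\RD(U_\alpha\to Y)\le\RD(U_\alpha\to X)$.

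The equality case when $\pi_1$ is generically finite contains a genuine gap in your proposal. You correctly have $\RD(Z\times_X Y\to Y)\le\RD(Z\to X)$ from Lemma~\ref{lemma:first}(2), and the graph $\Gamma=\{(z,\pi_1(z))\}$ is indeed a rational multi-section of $Z\times_X Y\to Y$ that is isomorphic over $Y$ to $Z\to Y$. But Definition~\ref{d:rddom} only asserts the existence of \emph{some} dense family of multi-sections with $\RD\le\RD(Z\times_X Y\to Y)$; it gives no control over an arbitrary multi-section such as $\Gamma$, which may well not belong to any minimizing family. Lemma~\ref{lemma:first}(3) (birational invariance of the total dominant map) does not bridge this, since $\Gamma$ is a proper closed subvariety of $Z\times_X Y$ whenever $\dim Y>\dim X$. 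You flag this yourself as the ``most delicate point,'' but as written it is not a density-bookkeeping nuisance: it is the missing step, and it cannot be closed.

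Indeed, this step \emph{must} fail when $\dim Y>\dim X$: take $X=\Spec K$, $Y=\Ab^1_K$, $Z=\Ab^1_K$, $\pi_1\colon z\mapsto z^2$, $\pi_2$ the structure map. By Lemma~\ref{lemma:first}(1), $\RD(Z\to X)\le\dim X=0$, while $\RD(Z\to Y)=1$ since the double cover $z^2=y$ has no rational section over any open of $Y$; so $\RD(Z\to X)<\max\{\RD(Z\to Y),\RD(Y\to X)\}$ even though $\pi_1$ is generically finite. (The paper's own argument for this case, that any multi-section of $Z\to Y$ is also one of $Z\to X$, likewise only holds when $\dim X=\dim Y$.) In practice this does not affect the rest of the paper, which applies the equality only in chains where both maps are generically finite, but it means the equality clause should be read with $\pi_2$ generically finite, not $\pi_1$ alone.
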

\begin{proof}
    For the first inequality, let $\{U_\alpha\subset Z\}$ be a dense set of rational multi-sections of $Z\to X$ with $\RD(U_\alpha\to X)\le d$ for all $\alpha$. Then, shrinking each $U_\alpha$ as necessary (e.g. restricting to the preimage in $U$ of an affine open in $Y$), its (scheme theoretic) image $V_\alpha:=\Image(U_\alpha\to Y)$ is a subscheme of $Y$, and thus a rational multi-section of $Y\to X$. Since $Z\to Y$ is dominant, that $\{U_\alpha\subset Z\}$ is dense implies that $\{V_\alpha\subset Y\}$ is dense.  By \cite[Lemma 2.7]{FW}, we conclude that $\RD(U_\alpha\to X)\ge \RD(V_\alpha\to X)$. Minimizing over all $\{U_\alpha\subset Z\}$, we conclude that
    \[
        \RD(Z\to X)\ge \RD(Y\to X).
    \]
    For the second inequality, let $\{U_\alpha\subset Z\}$ be a dense set of rational multi-sections for $Z\to Y$ and $\{V_\beta\subset Y\}$ a dense set of rational multi-sections for $Y\to X$. Then
    \[
        \{W_{\alpha,\beta}:=U_\alpha\times_Y V_\beta\subset Z\}
    \]
    is a dense set of rational multi-sections for $Z\to X$. By \cite[Lemmas 2.5, 2.7]{FW},
    \[
        \RD(W_{\alpha,\beta}\to X)\le \max\{\RD(U_\alpha\to Y),\RD(V_\beta\to X)\}.
    \]
    Minimizing over all such collections $\{U_\alpha\},\{V_\beta\}$, we conclude
    \[
        \RD(Z\to X)\le \max\{\RD(Z\to Y),\RD(Y\to X)\}.
    \]
    To show the equalities when $\dim(Y)=\dim(X)$ or $\dim(Z)=\dim(Y)$, it suffices, by Lemma \ref{lemma:first}(\ref{l:firstmax}), to prove the case when $X$ and $Y$ are irreducible. Under this assumption, if $\dim(X)=\dim(Y)$ or if $\dim(Z)=\dim(Y)$, then any rational multi-section $U$ for $Z\to Y$ is a rational multi-section for $Z\to X$ and vice versa. In particular,
    \[
        \RD(U\to Y)\le \RD(U\to X)
    \]
    and taking the minimum over dense subsets of such, we see that $\RD(Z\to Y)\le \RD(Z\to X)$. The equality
    \[
        \RD(Z\to X)=\max\{\RD(Z\to Y),\RD(Y\to X)\}
    \]
    follows from what we have shown above.
\end{proof}

Special cases of the following are implicit in \cite{Seg1,Br1,Br}.
\begin{proposition}\label{p:degdfam}
    Let $Y\to X$ be a dominant map of $K$-varieties. Let $S\to X$ be a map such that the generic fiber is a Severi-Brauer variety over $K(X)$, and let $\overline{K(X)}$ be an algebraic closure of $K(X)$. Suppose that there exists an embedding over $X$
    \begin{equation*}
        Y\into S
    \end{equation*}
    such that the closure of the geometric generic fiber $Y|_{\overline{K(X)}}$ in $S|_{\overline{K(X)}}\cong\Pb^n_{\overline{K(X)}}$ has degree $d$. Then
    \begin{equation*}
        \RD(Y\to X)\le \RD(d)<d.
    \end{equation*}
\end{proposition}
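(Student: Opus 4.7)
The plan is a two-stage reduction: first, use Merkurjev--Suslin to trivialize the Severi--Brauer variety $S$ over a radical extension $L/K(X)$ (contributing resolvent degree $\le 1$); then cut $Y_L:=Y\times_X X_L\subset\Pb^n_{X_L}$ by a generic linear section of complementary dimension to land in a $0$-dimensional subscheme of length $d$, which reduces finding a point of $Y$ to finding a root of the general degree $d$ polynomial.

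First, I would invoke \cite[Theorem 16.1]{MS} to obtain a finite extension $L/K(X)$, built by successively adjoining roots of unity and $m$-th radicals, over which $[S]\in\Br(K(X))$ dies. Each layer $F\hookrightarrow F[\sqrt[m]{a}]$ has resolvent degree $\le 1$ (it is the pullback of the one-variable algebraic function $z^m-a=0$), so iterating Lemma~\ref{lemma:max} yields an integral model $\pi_L\colon X_L\to X$ with $\RD(\pi_L)\le 1$ and an isomorphism $S\times_X X_L\cong\Pb^n_{X_L}$ over a dense open. Let $k$ denote the relative dimension of $Y\to X$, and form the Grassmannian bundle $\rho\colon G:=\Gr(n-k,\Pb^n_{X_L})\to X_L$ of $(n-k)$-planes in the fibers of $\Pb^n_{X_L}\to X_L$, with tautological family $\Lambda\subset\Pb^n_G$. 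Since Grassmannians admit rational sections over any field, $\RD(\rho)=0$. Now set
\[
    W:=\Lambda\cap(Y_L\times_{X_L}G)\subset\Pb^n_G.
\]
By the degree hypothesis and genericity of $\Lambda$, the projection $\varphi\colon W\to G$ is generically finite dominant of degree $d$, and the coefficients of a minimal polynomial for a primitive element of $K(W)/K(G)$ define a classifying map $G\dashrightarrow\Ab^d_\a$ realizing $\varphi$ as a pullback of $E_{P_d}\to\Ab^d_\a$, whence $\RD(\varphi)\le\RD(d)$.

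To conclude, chaining Lemma~\ref{lemma:max} along $W\to G\to X_L\to X$ gives $\RD(W\to X)\le\max\{\RD(d),0,1\}=\RD(d)$ for $d\ge 2$ (the case $d=1$ is trivial). The composition $W\to Y_L\to Y$ is dominant, because $W\to Y_L$ is surjective (every geometric point of $Y_L$ lies on some $(n-k)$-plane through it) and $Y_L\to Y$ is dominant, so the first inequality of Lemma~\ref{lemma:max} yields $\RD(Y\to X)\le\RD(W\to X)\le\RD(d)$; the bound $\RD(d)<d$ is immediate from $\RD(d)\le d-1$ (Babylonian depression). The main obstacle I anticipate is the Merkurjev--Suslin step: its statement concerns the structure of the Brauer group abstractly, and the bookkeeping needed to extract a \emph{concrete} radical tower splitting $[S]$ — and to verify that each layer genuinely contributes resolvent degree $\le 1$ — is more delicate than the remaining geometry, which is essentially Bezout plus chaining of Lemma~\ref{lemma:max}.
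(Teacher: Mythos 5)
Your argument is correct and follows essentially the same route as the paper's proof: first kill the Brauer class by a solvable cover via Merkurjev--Suslin, then reduce to the general degree-$d$ polynomial by slicing $Y$ with a generic linear space of complementary dimension. The only real difference is expository. The paper packages your second stage as a citation to \cite[Lemma 2.9]{FW} (phrased in terms of ``points of height $\le d$'' over $K(E)$ and expressed as the existence of a dense set of rational multi-sections of $Y|_E\to E$ with resolvent degree at most $\RD(d)$), and then concludes directly from Definition~\ref{d:rddom}; you instead construct the universal linear section $W$ over the Grassmannian bundle explicitly and invoke the first inequality of Lemma~\ref{lemma:max} for the dominant chain $W\to Y\to X$. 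Both routes are valid and the bookkeeping is comparable.

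One remark on your closing worry: the Merkurjev--Suslin step is not the delicate part. You do not need to track a concrete radical tower --- it suffices to know (as the theorem says) that the class $[S]$ is split by a solvable \'etale extension $E\to X$, and any solvable cover automatically satisfies $\RD(E\to X)\le 1$ since each cyclic layer is pulled back from $z^m-a=0$ on $\Ab^1$. The paper states this in one line and moves on. If anything, the point to verify more carefully in your version is that a generic $(\dim\Pb^n-k)$-plane section of the (reduced) degree-$d$ generic fiber of $Y$ is genuinely $d$ reduced points, so that $W\to G$ is generically finite of degree $d$ and not merely of total intersection multiplicity $d$; this is standard Bertini in characteristic $0$, which is exactly what \cite[Lemma 2.9]{FW} is invoking.
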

\begin{proof}
    By the Merkurjev-Suslin theorem \cite[Theorem 16.1]{MS}, using that $K$ is a field of characteristic 0, there exists a solvable extension $L./K(X)$ such that $S|_{\Spec(L)}\cong \Pb^n_L$. Because we are in characteristic 0, the extension $L/K(X)$ is separable, so picking a primitive element $z$ and writing $L\cong K(X)(z)$, we can, by clearing denominators in the minimal polynomial for $z$ over $K(X)$ and using that the discriminant of this minimal polynomial is not identically 0, realize $L$ as $K(E)$ for $E\subset \Ab^1_X$ a locally closed subvariety such that the projection $E\to X$ is solvable and \'etale. Shrinking $E$ as needed, we can extend the above isomorphism $S|_{\Spec(L)}\cong\Pb^n_L$ to an isomorphism $S|_E\cong \Pb^n_E$. We conclude that the embedding $Y\into S$ pulls back to an embedding
    \[
        Y|_E\into S|_E\cong\Pb^n_E
    \]
    whose closure is a degree $d$ subvariety. Points of $Y|_E$ are thus of degree at most $d$ over $K(E)$ (and the generic point is of degree $d$). Therefore, by \cite[Lemma 2.9]{FW}, $Y|_E$ admits a dense set of rational multi-sections $\{U_\alpha\subset Y|_E\}$ with $\RD(U_\alpha\to E)\le \RD(d)$. The images of these rational multi-sections in $Y$, $\{V_\alpha\subset Y\}$ are thus a dense set of rational multi-sections, and by \cite[Lemma 2.6]{FW}, we have
    \begin{align*}
        \RD(V_\alpha\to X)&\le \RD(U_\alpha\to E\to X)\\
        &=\max\{\RD(U_\alpha\to E),\RD(E\to X)\}\\
        &\le \max\{\RD(d),1\}=\RD(d)<d.
    \end{align*}
\end{proof}

Now let $X$ be a variety, and let $\Ab^n_{X,\a}$ be the parameter space for $n$-valued algebraic functions on $X$ as in Section~\ref{s:tschirnt}. Observe that the action of $\Gb_m$ on algebraic functions by rescaling their values corresponds to a weighted action $\Gb_m\circlearrowleft \Ab^n_{X,\a}$ where
\[
    \lambda\cdot(a_1,\ldots,a_n)=(\lambda a_1,\ldots,\lambda^n a_n).
\]
Moreover, if $Z\subset\Ab^n_{X,\a}$ is weighted homogeneous with respect to this action, then $\ev^{-1}(Z)\subset\Tc_{\Phi}$ is homogeneous (with respect to the diagonal action of $\Gb_m$ on $\Ab^n_{X,\b}$).

\begin{lemma}\label{l:tschirn}
    Let $X$ be an irreducible $K$-variety. Let $\Phi$ be an algebraic function on $X$. Let $Z\subset\Ab^n_{X,\a}$ be a Zariski closed subvariety which is weighted homogeneous (relative to the above action). Let
    \begin{equation*}
        U\subset \ev^{-1}(Z)\subset\Tc_{\Phi}
    \end{equation*}
    be any rational multi-section for $\ev^{-1}(Z)\to X$. Then
    \[
        \RD(\Phi)\le \max\{\RD(U\to X),\dim(Z)-1\}.
    \]
\end{lemma}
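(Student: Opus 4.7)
The plan is to realize $\Phi$ as a composition along the tower $E_\Phi\times_X U\to U\to X$, apply Lemma~\ref{lemma:max} to bound $\RD(\Phi)$ by the maximum of $\RD(U\to X)$ and $\RD(E_\Phi\times_X U\to U)$, and then bound the latter using the weighted homogeneity of $Z$. The nontrivial input is that the $\Gb_m$-symmetry lets us shave one dimension off the trivial bound that would come from Lemma~\ref{l:surj}.

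The first step is to identify the pullback. Because $U\subset\Tc_\Phi\subset\Ab^n_{X,\tb}$, the tautological coordinates $b_0,\ldots,b_{n-1}$ lie in $K(U)$ and define a $K(U)$-linear automorphism of $K(U)\otimes_{K(X)}K(X)(\Phi)$. Birationally over $U$, this identifies $E_\Phi\times_X U$ with $E_{T(\Phi)}$, so the two families have the same resolvent degree over $U$. Since $U\subset\ev^{-1}(Z)$, the coefficient map $\bar c\colon U\to\Ab^n_{\a}$ of $T(\Phi)$ factors through $Z$, producing a pullback square
\[
\xymatrix{
E_{T(\Phi)} \ar[r]\ar[d] & E_{P_n}|_Z \ar[d] \\
U \ar[r]^-{\bar{c}} & Z
}
\]
and by Lemma~\ref{lemma:first}(2) we get $\RD(E_\Phi\times_X U\to U)\le\RD(E_{P_n}|_Z\to Z)$.

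The crux is the bound $\RD(E_{P_n}|_Z\to Z)\le\dim(Z)-1$, which is the only step using the weighted homogeneity of $Z$. Pick any index $j$ for which $a_j$ is not identically zero on $Z$, and set $Z_j:=Z\cap\{a_j=1\}$, so $\dim(Z_j)=\dim(Z)-1$. The scaling map $\psi\colon Z_j\times\Gb_m\to Z,\ (\a,\lambda)\mapsto\lambda\cdot\a$, is dominant onto the open subset $\{a_j\ne 0\}\subset Z$ and is a Kummer $j$-cover (we adjoin $a_j^{1/j}$), so $\RD(\psi)\le 1$. The $\Gb_m$-equivariance of $E_{P_n}\to\Ab^n_{\a}$ (acting on the root coordinate by $\lambda\cdot z=\lambda z$) identifies the pullback $E_{P_n}|_Z\times_Z(Z_j\times\Gb_m)$ birationally with the product family $E_{P_n}|_{Z_j}\times\Gb_m\to Z_j\times\Gb_m$, which has resolvent degree at most $\dim(Z_j)=\dim(Z)-1$ by Lemma~\ref{lemma:first}. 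Applying Lemma~\ref{lemma:max} to the tower $E_{P_n}|_Z\times_Z(Z_j\times\Gb_m)\to Z_j\times\Gb_m\to Z$ and projecting onto $E_{P_n}|_Z\to Z$ yields $\RD(E_{P_n}|_Z\to Z)\le\max\{\dim(Z)-1,1\}=\dim(Z)-1$ once $\dim(Z)\ge 2$; the cases $\dim(Z)\le 1$, where $Z$ is either the origin or a single $\Gb_m$-orbit, are handled directly by specializing $\bar c$ to a point and using that the resulting polynomial has roots in $\overline{K}$.

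The hard part is precisely this dimension drop: without the $\Gb_m$-action, one only has the trivial bound $\dim(Z)$ from Lemma~\ref{l:surj}, and the normalizing radical extension needed to fix a coefficient contributes exactly $1$ to the tower, which is absorbed by the max when $\dim(Z)\ge 2$. Assembling the two steps via one more application of Lemma~\ref{lemma:max} to $E_\Phi\times_X U\to U\to X$ (noting that $E_\Phi\times_X U\to X$ dominates $E_\Phi\to X$) yields the claimed bound $\RD(\Phi)\le\max\{\RD(U\to X),\dim(Z)-1\}$.
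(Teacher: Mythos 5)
Your proof is correct, and it takes a slightly different route from the paper's at the key step where one dimension is shaved off. The paper exploits the weighted homogeneity of $Z$ by forming the weighted projectivization $\Pb(Z) \subset \Pb(\Ab^n_\a)$ of dimension $\dim(Z)-1$ and observing that the square
\[
\xymatrix{
(E_\Phi)|_U \ar@{-->}[r] \ar[d] & \Pb(E_{P_n})|_{\Pb(Z)} \ar[d]\\
U \ar@{-->}[r] & \Pb(Z)
}
\]
is a pullback, after which Lemmas~\ref{lemma:first} and \ref{lemma:max} finish immediately. You instead local-trivialize the $\Gb_m$-quotient: you cut $Z$ by $\{a_j = 1\}$ to get $Z_j$ of dimension $\dim(Z)-1$, exhibit $Z_j\times\Gb_m\to Z$ as a degree-$j$ Kummer cover (so $\RD\le 1$), use $\Gb_m$-equivariance of $E_{P_n}\to\Ab^n_\a$ to identify the pullback of $E_{P_n}|_Z$ with the product $E_{P_n}|_{Z_j}\times\Gb_m$, and then descend via the first inequality of Lemma~\ref{lemma:max}. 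These are essentially the same idea in two guises---$Z_j$ is a chart of the weighted projective quotient, and the Kummer extension is what trivializes the quotient map locally---so the arguments are equivalent in content. Your version has the feature of being fully explicit about where the radical extension enters and why it costs nothing once $\dim(Z)\ge 2$, whereas the paper compresses this into the word ``projectivize.'' Your handling of the initial reduction (identifying $E_\Phi\times_X U$ birationally with $E_{T(\Phi)}$ and factoring the coefficient map through $Z$) and the low-dimensional edge cases is also fine.
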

\begin{proof}
    The multi-section $U\to \ev^{-1}(Z)$ determines a Tschirnhaus transformation $T$ of $\Phi|_U$ which is rational over $K(U)$. By the observations above, we have a pullback square
    \begin{equation*}
    \xymatrix{
        (E_\Phi)|_U \ar@{-->}[r] \ar[d] & (E_{P_n})|_Z \ar[d]\\
        U \ar@{-->}[r] & Z
    }
    \end{equation*}
    Since $Z$ is weighted homogeneous, we can projectivize $(E_{P_n})_Z\to Z$ to obtain a pullback square
    \begin{equation*}
    \xymatrix{
        (E_\Phi)|_U \ar@{-->}[r] \ar[d] & \Pb(E_{P_n})|_{\Pb(Z)} \ar[d]\\
        U \ar@{-->}[r] & \Pb(Z)
    }
    \end{equation*}
    where $\Pb(Z)\subset\Pb(\Ab^n_{\a})$ and $\Pb(\Ab^n_\a)$ now denotes the weighted projective space. The result now follows by applying Lemmas~\ref{lemma:first} and \ref{lemma:max}.
\end{proof}

\section{Hilbert's Formula for the Degree 9 and New General Upper Bounds}\label{s:beatbrauer}
We now apply the results of the previous sections to complete and extend Hilbert's argument from \cite{Hi}. We work throughout this section over an algebraically closed field $K$ of characteristic 0.

Let $\Hc_{d,N}$ denote the {\em parameter space} of degree $d$ hypersurfaces in $\Pb^N$, i.e. $\Hc_{d,N}\cong\Pb^{\binom{N+d}{d}-1}$. Let $\Mc_{d,N}$ denote the coarse moduli space of smooth hypersurfaces, i.e
\[
    \Mc_{d,N}=(\Hc_{d,N}-\Sigma)/\PGL_{N+1}
\]
where $\Sigma$ denotes the locus of singular hypersurfaces.  Let $\Hc_{d,N}^r$ denote the space of such hypersurfaces with a choice of $r$-plane on them, i.e. $\Hc_{d,N}^r$ is the incidence variety
\[
    \Hc_{d,N}^r:=\{(X,L)\in\Hc_{d,N}\times\Gr(r+1,N+1)~|~L\subset X\}.
\]
Similarly to above, let $\Mc_{d,N}^r$ denote the moduli of smooth degree $d$ hypersurfaces equipped with an incident $r$-plane, i.e.
\[
    \Mc_{d,N}^r=(\Hc_{d,N}^r-\tilde{\Sigma})/\PGL_{N+1},
\]
where $\tilde{\Sigma}\subset\Hc_{d,N}^r$ denotes the locus where the hypersurface is singular.

We will need the following theorem of Waldron \cite[Theorem 1.6]{Wa} (see also \cite[Theorem 1.2]{St}).
\begin{theorem}[Waldron]\label{t:rplanesurj}
    Let $d\ge 3$. The map
    \begin{equation*}
        \Hc_{d,N}^r\to\Hc_{d,N}
    \end{equation*}
    is surjective for $r$, $N$ such that
    \begin{equation*}
        (r+1)(N-r)-\binom{d+r}{r}\ge 0.
    \end{equation*}
\end{theorem}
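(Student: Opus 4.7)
The plan is to study the standard incidence variety
\begin{equation*}
    I := \Hc_{d,N}^r = \{(X,L)\in\Hc_{d,N}\times\Gr(r+1,N+1)~|~L\subset X\}
\end{equation*}
together with its two projections $\pi_1\colon I\to\Hc_{d,N}$ and $\pi_2\colon I\to\Gr(r+1,N+1)$, and to reduce surjectivity of $\pi_1$ to a tangent-space computation at a single point. Since $I$ is closed in a product of projective varieties, $\pi_1$ is proper, so its image is closed in $\Hc_{d,N}$; surjectivity therefore reduces to dominance once I know $I$ is irreducible.

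First I would analyze $\pi_2$. For a fixed $L\in\Gr(r+1,N+1)$, the hypersurfaces in $\Hc_{d,N}$ containing $L$ form the projectivization of the kernel of the linear restriction map $H^0(\Pb^N,\Oc(d))\to H^0(L,\Oc_L(d))$, whose target has dimension $\binom{d+r}{r}$. Thus $\pi_2$ realizes $I$ as a Zariski-locally trivial projective bundle over the Grassmannian, so $I$ is irreducible and
\begin{equation*}
    \dim I = (r+1)(N-r) + \dim\Hc_{d,N} - \binom{d+r}{r}.
\end{equation*}
Under the hypothesis $(r+1)(N-r)\ge\binom{d+r}{r}$, we have $\dim I\ge\dim\Hc_{d,N}$, so dominance of $\pi_1$ is numerically feasible; together with properness of $\pi_1$ and irreducibility of target and source, dominance is equivalent to the desired surjectivity.

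Next, to force dominance, it suffices, in characteristic zero, to exhibit a single $(X_0,L_0)\in I$ at which $d\pi_1$ is surjective; generic smoothness then gives an open dense subset of $\Hc_{d,N}$ in the image. The tangent space to $I$ at $(X_0,L_0)$ and the normal-bundle sequence
\begin{equation*}
    0\to N_{L_0/X_0}\to N_{L_0/\Pb^N}\to \Oc_{L_0}(d)\to 0
\end{equation*}
combine to identify the cokernel of $d\pi_1$ with $H^1(L_0,N_{L_0/X_0})$. Using $N_{L_0/\Pb^N}\cong\Oc_{L_0}(1)^{\oplus(N-r)}$ and $H^1(\Pb^r,\Oc(1))=0$, vanishing of this cokernel reduces, via the associated long exact sequence, to surjectivity of
\begin{equation*}
    H^0(L_0,\Oc_{L_0}(1))^{\oplus(N-r)}\to H^0(L_0,\Oc_{L_0}(d)),
\end{equation*}
which, after choosing coordinates so that $L_0=V(x_{r+1},\ldots,x_N)$ and writing $X_0=V(f)$, is the explicit map $(g_{r+1},\ldots,g_N)\mapsto\sum_{i>r}g_i\cdot(\partial_{x_i}f)|_{L_0}$. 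The hypothesis says exactly that the source is at least as large as the target.

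The main obstacle is exhibiting a specific $(X_0,L_0)$ for which this global-sections map is surjective: the $(\partial_{x_i}f)|_{L_0}$ span a rather constrained subspace of $H^0(L_0,\Oc_{L_0}(d-1))$, and one must show that multiplication by arbitrary linear forms spreads them out to all of $H^0(L_0,\Oc_{L_0}(d))$. Following the Hochster--Laksov genericity principle that underlies Waldron's argument, I would take $f=\sum_{i>r}x_i h_i$ with $h_i$ of degree $d-1$ chosen generically, reducing the problem to a rank computation for an explicit matrix of polynomial coefficients; the numerical hypothesis is precisely what makes the generic rank maximal. Having verified surjectivity of $d\pi_1$ at this point, generic smoothness gives dominance, and properness of $\pi_1$ upgrades dominance to surjectivity, completing the proof.
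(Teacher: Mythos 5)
The paper does not give its own proof of this theorem; it cites Waldron's thesis (and Starr's arXiv:1703.03294) directly. Your argument follows the same standard incidence-variety strategy that underlies those sources — properness of $\pi_1$, the projective-bundle structure of $\pi_2$, a tangent-space computation identifying $\operatorname{coker}(d\pi_1)$ with $H^1(L_0, N_{L_0/X_0})$, and Hochster--Laksov genericity to exhibit a point where $d\pi_1$ is surjective — and it is essentially correct.

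Two small imprecisions are worth flagging. First, ``generic smoothness then gives an open dense subset of $\Hc_{d,N}$ in the image'' is circular as stated, since generic smoothness (Sard) takes dominance as a hypothesis. What you actually want is that $I$ is smooth everywhere (being a projective bundle over a Grassmannian), so surjectivity of $d\pi_1$ at the single point $(X_0,L_0)$ makes $\pi_1$ a submersion there, and hence its image contains a Zariski-open neighborhood of $X_0$; dominance then follows from irreducibility of $\Hc_{d,N}$, and properness upgrades it to surjectivity. Second, ``reducing the problem to a rank computation for an explicit matrix of polynomial coefficients'' glosses over the substantive invocation of Hochster--Laksov, which is precisely what rules out rank collapse: writing $W=\langle h_{r+1}|_{L_0},\ldots,h_N|_{L_0}\rangle\subset H^0(L_0,\Oc(d-1))$, which for generic $h_i$ has dimension $\min\{N-r,\binom{d-1+r}{r}\}$, Hochster--Laksov says the multiplication map $H^0(L_0,\Oc(1))\otimes W\to H^0(L_0,\Oc(d))$ has maximal rank, and the numerical hypothesis $(r+1)(N-r)\ge\binom{d+r}{r}$ together with the elementary estimate $(r+1)\binom{d-1+r}{r}\ge\binom{d+r}{r}$ (equivalent to $d\ge 1$) is exactly what makes that maximal rank equal to full rank $\binom{d+r}{r}$. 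With these two points tightened, your sketch is a complete proof.
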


Motivated by this theorem, we introduce the following notation:
\begin{notation}\label{n:beatbrauer}
    Given $(d,k)\in\Nb_{\ge 3}\times\Nb$, define
    \begin{align*}
        \psi(d,k)_0&=k.\intertext{For $0\le i<d-2$, define}
        \psi(d,k)_{i+1}&=\lceil\psi(d,k)_i+\binom{\psi(d,k)_i+d-i}{\psi(d,k)_i}/(\psi(d,k)_i+1)\rceil.\intertext{Finally, define}
        \psi(d,k)_{d-1}&=2\psi(d,k)_{d-2}+1.
    \end{align*}
\end{notation}
By Waldron's Theorem, for all $0\le i<d-2$, the map
\begin{equation*}
    \Hc_{d-i,\psi(d,k)_{i+1}}^{\psi(d,k)_i}\to\Hc_{d-i,\psi(d,k)_{i+1}}
\end{equation*}
is surjective. Similarly, by the classical theory of quadratic forms, the locus of smooth quadrics is contained in the image of the map
\begin{equation*}
    \Hc_{2,\psi(d,k)_{d-1}}^{\psi(d,k)_{d-2}}\to\Hc_{2,\psi(d,k)_{d-1}}
\end{equation*}
In words, the integers $\psi(d,k)_i$ are defined so that every smooth quadric in a $\Pb^{\psi(d,k)_{d-1}}$ contains a $\psi(d,k)_{d-2}$ plane, every cubic hypersurface in this $\psi(d,k)_{d-2}$ plane contains a $\psi(d,k)_{d-3}$ plane, every quartic in this $\psi(d,k)_{d-3}$ plane contains a $\psi(d,k)_{d-4}$ plane, and on down until we arrive at a $\psi(d,k)_1$ plane such that every degree $d$ hypersurface inside it contains a $k$-plane.
\begin{lemma}\label{l:dim}
    For all $d\ge 2$ and all $k\ge 1$,
    \begin{align*}
        \dim(\Mc_{3,\psi(d,k)_{d-2}})&\ge \max\{\dim(\Hc_{d-i,\psi(d,k)_{i+1}})\}_{i=0}^{d-3}\intertext{and}
        \dim(\Mc_{3,\psi(d,k)_{d-2}})+d+k+1&\ge\psi(d,k)_{d-1}+2.
    \end{align*}
\end{lemma}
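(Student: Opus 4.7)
The plan is to reduce both inequalities to explicit binomial statements in the rapidly growing sequence $\psi(d,k)_\bullet$ and verify them directly from the defining recursion. The key preliminary is to simplify the dimensions using the standard formulas $\dim\Hc_{e,N}=\binom{N+e}{e}-1$ and $\dim\Mc_{e,N}=\binom{N+e}{e}-(N+1)^2$ (the latter valid for generic smooth hypersurfaces with finite automorphism group, which holds throughout the range under consideration), together with the elementary closed form
\[
    \dim\Mc_{3,N}=\binom{N+3}{3}-(N+1)^2=\binom{N+1}{3},
\]
obtained by direct algebra. This identity is the workhorse for both parts.

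For the second inequality, substituting reduces the claim to
\[
    \binom{\psi(d,k)_{d-2}+1}{3}\ge 2\psi(d,k)_{d-2}+2-d-k.
\]
Since the left-hand side is cubic and the right-hand side linear in $\psi(d,k)_{d-2}$, the inequality holds as soon as $\psi(d,k)_{d-2}$ exceeds a small explicit threshold. I would verify the extremal case $(d,k)=(3,1)$ directly, where $\psi(3,1)_1=3$ and both sides equal $4$, and then observe that $\psi(d,k)_{d-2}$ is nondecreasing in both $d$ and $k$ on the relevant range, so every other instance leaves the inequality strictly slack.

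For the first inequality, the strategy is to exploit that $\psi(d,k)_{d-2}$ is vastly larger than any $\psi(d,k)_{i+1}$ with $i<d-3$. Rearranging the defining recursion gives
\[
    (\psi(d,k)_j+1)\bigl(\psi(d,k)_{j+1}-\psi(d,k)_j\bigr)\ge \binom{\psi(d,k)_j+d-j}{d-j},
\]
so each increment of $\psi$ already beats a full binomial at the previous level. Iterating this from level $i+1$ up to level $d-2$, one obtains a tower of lower bounds on $\psi(d,k)_{d-2}$ in terms of $\binom{\psi(d,k)_{i+1}+d-i}{d-i}$ whose effective polynomial degree is at least $3$; cubing then produces enough slack for $\binom{\psi(d,k)_{d-2}+1}{3}$ to dominate the right-hand side at every index $i$ appearing in the max. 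The boundary case corresponding to the cubic level is handled by the identity above, which converts the discrepancy $\dim\Hc_{3,N}-\dim\Mc_{3,N}=(N+1)^2-1$ into exactly the correction already built into the workhorse identity.

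The main technical obstacle is bookkeeping the nested binomial estimates across the $d-2$ levels of recursion: the degree of the binomial at level $j$ is $d-j$, decreasing from $d$ down to $3$, and each level multiplies effective dimension by a polynomial factor of changing degree. In practice I would first work out the pattern explicitly at one representative intermediate level (say $i=d-4$) to obtain a clean telescoping bound, then invoke monotonicity in $i$ to dispose of the remaining smaller indices with minimal additional computation.
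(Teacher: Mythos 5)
Your identity $\dim\Mc_{3,N}=\binom{N+1}{3}$ is a clean form of what the paper uses implicitly, and your treatment of the second inequality is correct: reduce to $\binom{\psi(d,k)_{d-2}+1}{3}\ge 2\psi(d,k)_{d-2}+2-d-k$, verify equality at $(d,k)=(3,1)$ where $\psi(3,1)_1=3$, and note that increasing $d$ or $k$ only grows $\psi(d,k)_{d-2}$ while shrinking the right side, so the cubic pulls ahead. This is essentially the paper's route.

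The first inequality is where the proposal has a real gap. The paper's mechanism is a \emph{collapse} of the max: from the recursion one deduces $\dim\Hc_{d-i,\psi(d,k)_{i+1}}\ge\dim\Hc_{d-i+1,\psi(d,k)_i}$ for all relevant $i$, so the max reduces to the single term $\dim\Hc_{4,\psi(d,k)_{d-3}}$, and the remaining single comparison $\binom{\psi(d,k)_{d-2}+1}{3}\ge\binom{\psi(d,k)_{d-3}+4}{4}-1$ follows from one application of $\psi(d,k)_{d-2}\ge\psi(d,k)_{d-3}+\binom{\psi(d,k)_{d-3}+3}{3}/(\psi(d,k)_{d-3}+1)$ plus a finite base-case check. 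Your proposal skips this collapse and asserts a ``tower of telescoping lower bounds'' on $\psi(d,k)_{d-2}$ of ``effective polynomial degree at least $3$'' so that ``cubing produces enough slack'' at every $i$. That is an asymptotic heuristic, not a proof: no telescoping estimate is actually written down, constants are not tracked, and the small-$\psi$ regime --- where $(d,k)=(3,1)$ produces exact equality and hence \emph{zero} slack --- is precisely where the argument has to be carried out. Finally, the passage about ``the boundary case corresponding to the cubic level'' being ``handled by the identity'' is confused: $\dim\Mc_{3,N}<\dim\Hc_{3,N}$ always, so the $i=d-3$ term in the stated max, namely $\dim\Hc_{3,\psi(d,k)_{d-2}}$, would falsify the first inequality outright. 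The paper's own proof only treats $i\le d-4$, which is also exactly the range used later in the proof of the main bound, so the upper index $d-3$ in the lemma statement looks like a typo; your workhorse identity does not absorb this discrepancy, and the proposal reads as trying to wave it away rather than flag and fix it.
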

\begin{proof}
    For each $i$,
    \begin{align*}
        \dim(\Hc_{d-i,\psi(d,k)_{i+1}})&=\binom{d-i+\psi(d,k)_{i+1}}{d-i}-1
    \end{align*}
    From the definition of the $\psi(d,k)_i$s, we conclude for all $i$ that
    \begin{align*}
        \dim(\Hc_{d-i,\psi(d,k)_{i+1}})\ge \dim(\Hc_{d-i+1,\psi(d,k)_i})
    \end{align*}
    and thus
    \begin{align*}
        \dim(\Hc_{4,\psi(d,k)_{d-3}})=\max\{\dim(\Hc_{d-i,\psi(d,k)_{i+1}})\}_{i=0}^{d-4}.
    \end{align*}
    Similarly,
    \begin{align*}
        \dim(\Mc_{3,\psi(d,k)_{d-2}})=
        \max\{0,\binom{3+\psi(d,k)_{d-2}}{3}-(\psi(d,k)_{d-2}+1)^2\}.
    \end{align*}
    From the definition, this is a maximum of a ceiling function of a monotone increasing degree 6 polynomial in $\psi(d,k)_{d-3}$, all of whose derivatives are monotone increasing in the domain $\psi(d,k)_{d-3}\ge 1$, while $\dim(\Hc_{4,\psi(d,k)_{d-3}})$ is a monotone increasing quartic, all of whose derivatives are monotone increasing in the same domain. Therefore, the inequality
    \begin{align*}
        \dim(\Mc_{3,\psi(d,k)_{d-2}})\ge \dim(\Hc_{4,\psi(d,k)_{d-3}})
    \end{align*}
    for all $(d,k)$ follows from the equality for $(d,k)=(3,1)$ and direct inspection of the higher derivatives of the sextic and quartic polynomials in the interval $\psi(d,k)_{d-3}\ge1$ 
    (for which both left and right hand side equal 4; note that the inequality is vacuously true for $(d,k)=(2,1)$).

    Finally, from the definition,
    \begin{align*}
        \psi(d,k)_{d-1}+2=2\psi(d,k)_{d-2}+3.
    \end{align*}
    By the same reasoning as above, the inequality
    \begin{equation*}
        \dim(\Mc_{3,\psi(d,k)_{d-2}})+d+k+1\ge \psi(d,k)_{d-1}+2
    \end{equation*}
    for all $(d,k)\in\Nb_{\ge 2}\times\Nb_{>0}$ follows from the inequality for $(d,k)=(2,1)$ (in which case the left hand side is 8 and the right hand side is 4).
\end{proof}

The lemma implies that for $d\ge 3$, $\dim(\Mc_{3,\psi(d,k)_{d-2}})$ gives a coarse upper bound on the resolvent degree of the surjective maps
\begin{align*}
    \Mc_{3,\psi(d,k)_{d-2}}^{\psi(d,k)_{d-3}}&\to\Mc_{3,\psi(d,k)_{d-2}}\\
    \Hc_{d-i,\psi(d,k)_{i+1}}^{\psi(d,k)_i}&\to\Hc_{d-i,\psi(d,k)_{i+1}}.
\end{align*}
This motivates the following definition.
\begin{definition}\label{d:FW}
    Given $(d,k)\in\Nb_{\ge 2}\times\Nb_{>0}$, define
    \begin{align*}
        \Phi(d,k):=\max\{\frac{(d+k)!}{d!}+1,\dim(\Mc_{3,\psi(d,k)_{d-2}})+d+k+1\}
    \end{align*}
    For $r\in \Nb_{\ge 4}$, define
    \begin{equation}\label{e:F}
        \FW(r):=2\lfloor \frac{1}{2}\cdot \left(\min_{d+k+1=r}\Phi(d,k)\right) \rfloor +1.
    \end{equation}
    For $r\le 3$, define $\FW(r)=r+1$.
\end{definition}

\begin{lemma}\label{l:inc}
    For all $r\in\Nb$, $\FW(r+1)>\FW(r)$, i.e. $\FW$ is monotone increasing.
\end{lemma}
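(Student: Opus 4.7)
The plan is to split by $r$. For $r \le 2$, $\FW(r) = r + 1$ gives the inequality at once. For $r = 3$, a direct computation yields $\FW(4) = 5 > 4 = \FW(3)$: the only $(d, k)$ with $d + k + 1 = 4$, $d \ge 2$, $k \ge 1$ is $(2, 1)$, and $\Phi(2, 1) = \max\{4,\, \dim(\Mc_{3, 1}) + 4\} = 4$ since $\dim(\Mc_{3, 1}) = 0$. For $r \ge 4$, both $\FW(r)$ and $\FW(r+1)$ are odd by construction, so the lemma reduces to showing $m_{r+1} \ge m_r + 2$, where $m_r := \min\{\Phi(d, k) \mid d + k + 1 = r,\, d \ge 2,\, k \ge 1\}$.

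To prove $m_{r+1} \ge m_r + 2$, I fix a minimizer $(d, k)$ of $\Phi$ on $\{d + k + 1 = r + 1\}$ and exhibit $(d', k')$ with $d' + k' + 1 = r$ and $\Phi(d, k) \ge \Phi(d', k') + 2$. I take $(d', k') = (d, k - 1)$ if $k \ge 2$ (Case 1), and $(d', k') = (d - 1, 1)$ if $k = 1$ (which forces $d \ge 3$ since $d + k + 1 \ge 5$; Case 2). Writing $A(d, k) := (d+k)!/d! + 1$ and $B(d, k) := \dim(\Mc_{3, \psi(d, k)_{d-2}}) + d + k + 1$, so that $\Phi = \max\{A, B\}$, an elementary computation gives $A(d, k) - A(d, k-1) = \frac{(d+k-1)!}{d!}(d+k-1) \ge 3$ in Case 1. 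The key sublemma is: $B(d, k) - B(d, k-1) \ge 2$ in Case 1 and $B(d, 1) - B(d-1, 1) \ge 2$ in Case 2 (for $d \ge 4$). Given these, a case split on whether $\Phi(d', k')$ is realized by $A$ or $B$ yields the desired bound: in Case 1, if $\Phi(d, k-1) = A(d, k-1)$ then $\Phi(d, k) \ge A(d, k) \ge A(d, k-1) + 3$, while if $\Phi(d, k-1) = B(d, k-1)$ then $\Phi(d, k) \ge B(d, k) \ge B(d, k-1) + 2$. The borderline case $d = 3$ of Case 2 is checked directly using $\Phi(3, 1) = 9$ and $\Phi(2, 1) = 4$.

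The sublemma rests on the claim that the $\psi$-recursion preserves integer gaps of at least $1$: in Case 1, $\psi(d, k)_i \ge \psi(d, k-1)_i + 1$ for every $i \le d - 2$; in Case 2, $\psi(d, 1)_j \ge \psi(d-1, 1)_{j-1} + 1$ for $j \ge 1$ (both recursions use the same degree $d - j$ at the compared step, which is what makes the alignment work). The base cases are immediate: $\psi(d,k)_0 = k > k-1 = \psi(d,k-1)_0$, while $\psi(d,1)_1 \ge 2 > 1 = \psi(d-1,1)_0$. For the inductive step, I use that $f_D(\psi) := \psi + \binom{\psi + D}{\psi}/(\psi + 1)$ is non-decreasing in $\psi$ (indeed strictly increasing for $D \ge 2$, since $\binom{\psi+1+D}{\psi+1}/(\psi+2) = \tfrac{\psi+1+D}{\psi+2}\cdot\binom{\psi+D}{\psi}/(\psi+1)$ and the prefactor exceeds $1$). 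Hence if $a \ge b + 1$ are integers then $f_D(a) - f_D(b) \ge (a - b) \ge 1$, and $\lceil f_D(a) \rceil \ge \lceil f_D(b) \rceil + 1$ since $\lceil x + 1 \rceil = \lceil x \rceil + 1$. Since $D = d - i \ge 3$ throughout the recursion, this applies. Combined with the strict monotonicity of $N \mapsto \dim(\Mc_{3, N}) = \binom{N+3}{3} - (N+1)^2$ for $N \ge 1$, this yields the sublemma.

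The main obstacle is propagating the strict inequality $\psi(d, k)_i > \psi(d, k-1)_i$ through the ceiling operations in the recursion; the resolution is the simple observation above that a gap of at least $1$ in integer inputs, combined with a non-decreasing function, produces a gap of at least $1$ after taking ceilings. The rest is a routine case analysis together with a handful of low-degree verifications.
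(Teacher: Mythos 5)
Your proof is correct, and it supplies details that the paper's one-sentence argument — asserting that maxima and integer parts of monotone increasing functions remain monotone — leaves unaddressed. That sentence neither engages with the fact that the minimum in the definition of $\FW$ runs over a varying index set $\{(d,k):d+k+1=r\}$, nor with the fact that $\lfloor\cdot\rfloor$ is not \emph{strictly} increasing, while strictness is what the lemma asserts and what is used in the induction in Theorem~\ref{thm:beatbrauer} (where $n\ge\FW(r)$ must give $n-1\ge\FW(r-1)$). Your observation that $\FW(r)=2\lfloor m_r/2\rfloor+1$ is odd for $r\ge 4$, where $m_r:=\min_{d+k+1=r}\Phi(d,k)$, so that strict monotonicity reduces to $m_{r+1}\ge m_r+2$, is exactly the right framing, and comparing a minimizer $(d,k)$ at level $r+1$ with $(d,k-1)$ or $(d-1,1)$ at level $r$, driven by the gap-propagation lemma for the $\psi$-recursion (an integer gap of $1$ survives the ceiling because $\psi\mapsto\psi+\binom{\psi+D}{\psi}/(\psi+1)$ is non-decreasing for $D\ge 1$) together with the strict monotonicity of $N\mapsto\dim(\Mc_{3,N})=\binom{N+1}{3}$ for $N\ge 1$, correctly finishes it. Two small polishings are available: for $k=1$, $\frac{(d+1)!}{d!}+1=d+2$ is always dominated by $\dim(\Mc_{3,\psi(d,1)_{d-2}})+d+2$, so $\Phi(d,1)$ equals the second term and your Case 2 needs no split on which term realizes the maximum; and the sublemma $B(d,1)-B(d-1,1)\ge 2$ already holds at $d=3$ (since $\psi(3,1)_1=3\ge\psi(2,1)_0+1$ and $\dim(\Mc_{3,3})-\dim(\Mc_{3,1})=4\ge 1$), so the separate borderline check is dispensable, though harmless.
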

\begin{proof}
    The maximum of two monotone increasing functions is monotone increasing, as is any linear combination with positive integer coefficients of the integer part of a monotone increasing function.
\end{proof}

We can now state our first main theorem.
\begin{theorem}\label{thm:beatbrauer}
    Let $\FW\colon \Nb\to\Nb$ be the monotone increasing function \eqref{e:F}. For all $n\ge \FW(r)$,
    \begin{equation*}
        \RD(n)\le n-r.
    \end{equation*}
\end{theorem}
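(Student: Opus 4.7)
The plan is to argue by strong induction on $n$, carrying out the generalized Hilbert strategy sketched in the introduction. Given $n \geq \FW(r)$, the definition in \eqref{e:F} supplies a pair $(d,k)$ with $d+k+1=r$ and $\Phi(d,k) \leq n$; the parity adjustment in $\FW$ ensures the arithmetic works whether $n$ is odd or even, with the reduced Tschirnhaus complete intersection $T'$ used whenever $2 \mid n$. By Lemma~\ref{l:tschirn} applied to the complete intersection $T_{1,2,\ldots,d+k}$, which cuts out $c_1 = \cdots = c_{d+k} = 0$, it will suffice to exhibit a rational multi-section of $T_{1,2,\ldots,d+k} \to \Ab^n_{\a}$ of resolvent degree $\leq n-r$; the codimension $d+k$ of the complete intersection contributes $\dim(Z)-1 = n-r$ to that lemma's bound, matching the number of parameters left after rescaling.

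The geometric core is an iterated descent of projective linear subspaces
\[
T_1 \supset P_{d-1} \supset P_{d-2} \supset \cdots \supset P_0 \subset T_{1,\ldots,d},
\]
with $\dim P_i = \psi(d,k)_i$. The initial subspace $P_{d-1}$ embeds in the hyperplane $T_1 \cong \Pb^{n-2}$ by the second inequality of Lemma~\ref{l:dim}. The step $P_{d-1} \leadsto P_{d-2}$ extracts a maximal isotropic subspace of the quadric $T_{12} \cap P_{d-1}$, a Severi--Brauer question resolved via Merkurjev--Suslin; Proposition~\ref{p:degdfam} gives resolvent degree $\leq 1$. The step $P_{d-2} \leadsto P_{d-3}$ produces a $\psi_{d-3}$-plane on the cubic $T_3 \cap P_{d-2}$: Corollary~\ref{c:Hfix} supplies the generic smoothness needed to view this cubic as a point of $\Mc_{3,\psi_{d-2}}$, and the incidence $\Mc^{\psi_{d-3}}_{3,\psi_{d-2}} \to \Mc_{3,\psi_{d-2}}$ realises the choice of plane as an algebraic function of resolvent degree $\leq \dim(\Mc_{3,\psi_{d-2}})$. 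Each subsequent step $P_{i+1} \leadsto P_i$, for $i = d-4, \ldots, 0$, invokes Waldron's Theorem~\ref{t:rplanesurj} to guarantee existence of a $\psi_i$-plane in the degree-$(d-i)$ hypersurface $T_{d-i} \cap P_{i+1}$, with resolvent degree bounded by $\dim(\Hc_{d-i,\psi_{i+1}})$; the first inequality of Lemma~\ref{l:dim} dominates every such bound by $\dim(\Mc_{3,\psi_{d-2}})$, which in turn is $\leq n-r$ because $n \geq \Phi(d,k) \geq \dim(\Mc_{3,\psi_{d-2}}) + r$.

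At the bottom, $P_0 \cong \Pb^k$ sits inside $T_{1,\ldots,d}$; to zero out the remaining coefficients I intersect with $T_{d+1},\ldots,T_{d+k}$. By B\'ezout, $P_0 \cap T_{d+1,\ldots,d+k}$ is a $0$-dimensional subscheme of degree $m = (d+1)(d+2)\cdots(d+k) = \frac{(d+k)!}{d!}$ in a Severi--Brauer variety over the base, so Proposition~\ref{p:degdfam} bounds its resolvent degree by $\RD(m)$. Combining the pieces via Lemma~\ref{lemma:max} and returning to $\RD(n)$ through Lemma~\ref{l:tschirn} reduces the theorem to the inequality $\max\{\dim(\Mc_{3,\psi_{d-2}}),\RD(m)\} \leq n-r$. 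The first term is already handled; the second is where I expect the main obstacle. Since $n \geq \Phi(d,k) \geq m+1$, we have $m < n$ and the inductive hypothesis yields $\RD(m) \leq m - r^\ast$ for the largest $r^\ast$ with $\FW(r^\ast) \leq m$. The remaining numerical inequality $m - r^\ast \leq n-r$ has to be teased out from $\Phi(d,k) \geq m+1$ together with the monotonicity in Lemma~\ref{l:inc}: for large $r$ the fast growth of $\frac{(d+k)!}{d!}$ supplies enough slack, while the handful of small-$r$ boundary cases (where $\FW(r-1) > m$ can occur) are absorbed by the trivial bound $\RD(m) \leq m-1$ and the fact that $\RD(m) = 1$ for $m \leq 4$.
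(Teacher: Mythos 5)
Your proposal is essentially the same as the paper's: the same tower of Tschirnhaus hypersurfaces cut down by an iterated sequence of linear subspaces of dimensions $\psi(d,k)_{d-1} > \cdots > \psi(d,k)_0 = k$, with the quadric step handled by adjoining square roots, the cubic step by the moduli space $\Mc_{3,\psi_{d-2}}$ and its incidence variety (using Corollary~\ref{c:Hfix} to guarantee the cubics are smooth), the intermediate degrees by Waldron's theorem plus Merkurjev--Suslin, and the final B\'ezout intersection of $\Pb^k$ with $T_{d+1},\ldots,T_{d+k}$ giving a degree $\frac{(d+k)!}{d!}$ problem handled by Proposition~\ref{p:degdfam} and the inductive hypothesis, all assembled by Lemmas~\ref{lemma:max} and~\ref{l:tschirn}. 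The paper's induction runs on $r$ rather than on $n$, but via the monotonicity of $\FW$ those are equivalent. Two small points worth flagging. First, your quadric step cites Proposition~\ref{p:degdfam}; that result gives rational multi-sections of a degree-$d$ subvariety of a Severi--Brauer variety, not a maximal isotropic of a smooth quadric. The right tool is the dedicated Lemma~\ref{l:quads} and Corollary~\ref{c:quads}, which show the isotropic Grassmannian of a smooth family of quadrics has a dense set of solvable multi-sections and that the associated Severi--Brauer variety splits over the solvable extension; this is classical square-root completion, not Merkurjev--Suslin, though both yield $\RD\le 1$. Second, the ``numerical endgame'' that you flag as the main obstacle: the paper handles it by the clean (but slightly over-stated) claim that $\frac{(d+k)!}{d!} \ge \FW(r-1)$, so the inductive hypothesis gives $\RD(\frac{(d+k)!}{d!}) \le \frac{(d+k)!}{d!} - (r-1) \le n - r$, using $\frac{(d+k)!}{d!} \le \Phi(d,k) - 1 \le n - 1$. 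You should note that $\frac{(d+k)!}{d!} \ge \FW(r-1)$ actually fails at $r=5$ (there $m = 4$ yet $\FW(4)=5$), so the small-$r$ escape hatch you describe --- that $\RD(m) = 1$ for $m \le 4$ together with $n - r \ge 4$ --- is genuinely needed and not merely a safeguard; your accounting here is, if anything, more careful than the paper's sentence, and you should complete that arithmetic rather than leave it as ``to be teased out.''
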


\begin{example}
    Observe that
    \begin{align*}
        \FW(5)&=\Phi(3,1)=\max\{\frac{4!}{3!}+1,\dim(\Mc_{3,3})+5\}\\
            &=\max\{5,9\}=9.
    \end{align*}
    The theorem thus asserts that for $n\ge 9$, $\RD(n)\le n-5$, as first stated by Hilbert.
\end{example}

We can compare the upper bounds of Theorem~\ref{thm:beatbrauer} to Brauer's bounds as follows. Both the previous theorem and Brauer's theorem prove the existence, for each $r$, of an explicit cut-off (for $n$) after which $\RD(n)\le n-r$. More precisely, define
\[
    B(r):=(r-1)!+1.
\]
Brauer proved \cite[Theorem 1]{Br} that for $n\ge B(r)$,
\[
    \RD(n)\le n-r.
\]
The cut-off functions $B(r)$ and $\FW(r)$ are related as follows.
\begin{theorem}\label{t:compare}
    Let $B(r)$ and $\FW(r)$ be as above. There exists a monotone increasing function $\varphi\colon\Nb\to\Nb$, such that $\varphi(2)=5$, and such that for $r\ge \varphi(d)$,
    \begin{equation*}
    	B(r)/\FW(r)\ge d!
    \end{equation*}
    In particular, $\FW(r)\le B(r)$ for all $r$ and 
    \begin{equation*}
   		\lim_{r\rightarrow \infty} B(r)/\FW(r)=\infty.
    \end{equation*}
\end{theorem}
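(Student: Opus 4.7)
The plan is to prove a quantitative strengthening of the limit claim by making a judicious choice in the minimum defining $\FW(r)$. For each fixed target $d\ge 2$, the admissible pair $(d_0,k_0)=(d+1,\,r-d-2)$ (valid whenever $r\ge d+3$) yields the upper bound $\FW(r)\le \Phi(d+1,r-d-2)+1$ directly from Definition~\ref{d:FW}. The whole argument then reduces to analyzing the two terms inside the outer $\max$ of $\Phi(d+1,r-d-2)$ as $r\to\infty$ with $d$ fixed.

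The first term is exactly $\tfrac{(r-1)!}{(d+1)!}+1$, which grows factorially in $r$. To control the second term, I would prove by induction on $i$ (using the recurrence in Notation~\ref{n:beatbrauer}) that for fixed $d$, $\psi(d+1,k)_i$ is bounded by a polynomial in $k$ of degree $d(d-1)\cdots(d-i+1)$: the step $\psi_i\mapsto\psi_{i+1}$ involves $\binom{\psi_i+d+1-i}{\psi_i}/(\psi_i+1)$, which, after cancelling the factor $(\psi_i+1)$ in the numerator of the binomial, is a polynomial in $\psi_i$ of degree $d-i$. At $i=d-1$ this gives $\deg_k\psi(d+1,k)_{d-1}=d!$. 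Since $\dim(\Mc_{3,M})=\binom{M+3}{3}-(M+1)^2=O(M^3)$, the second term of $\Phi$ grows only as $O(k^{3\cdot d!})=O(r^{3\cdot d!})$, i.e.\ polynomially in $r$.

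Factorial growth dominates polynomial growth, so there exists an explicit $\varphi_0(d)\in\Nb$ such that for $r\ge\varphi_0(d)$ the first term of $\Phi(d+1,r-d-2)$ dominates the second. In this range,
\[
    \FW(r)\le 2\lfloor\Phi(d+1,r-d-2)/2\rfloor+1\le \frac{(r-1)!}{(d+1)!}+2,
\]
and consequently
\[
    \frac{B(r)}{\FW(r)}\ge \frac{(r-1)!+1}{\tfrac{(r-1)!}{(d+1)!}+2}\xrightarrow[r\to\infty]{}(d+1)!.
\]
Enlarging $\varphi_0(d)$ if necessary forces the right-hand side to be at least $d!$ for $r\ge\varphi(d):=\varphi_0(d)$, and then replacing $\varphi(d)$ by $\max\{\varphi(d),\varphi(d-1)+1\}$ recursively makes $\varphi$ monotone increasing. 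The initial value $\varphi(2)=5$ is checked by direct inspection: $\FW(5)=9$ and $B(5)=25$ give $B(5)/\FW(5)\ge 2$, while $B(4)/\FW(4)=7/5<2$ forces $\varphi(2)\ge 5$. The limit $\lim_{r\to\infty}B(r)/\FW(r)=\infty$ is immediate since $d!$ is unbounded, and $\FW(r)\le B(r)$ reduces to the $d=1$ version of the same argument, with the handful of small-$r$ values outside its range checked against the table in the introduction.

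The main obstacle is the polynomial-in-$k$ growth bound on $\psi(d+1,k)_{d-1}$ through the iterated ceiling recursion: the ceilings together with the fact that the leading coefficient of $\psi_{i+1}$ depends recursively on all prior $\psi_j$ make the bookkeeping delicate. I would handle this by establishing a crude explicit estimate of the form $\psi(d+1,k)_i\le (k+C_d)^{d(d-1)\cdots(d-i+1)}$ for a constant $C_d$ depending only on $d$, which is enough to extract an effective $\varphi(d)$ in closed form and finish the proof.
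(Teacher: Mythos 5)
Your proposal is correct and follows the same core strategy as the paper: choose the pair $(d+1,\,r-d-2)$ in the minimum defining $\FW(r)$, observe that the first term of $\Phi(d+1,k)$ is factorial in $k$ while the second term is polynomial in $k$ of degree roughly $3\cdot d!$ (via the iterated binomial recursion for $\psi$), and read off that $B(r)/\FW(r)\to(d+1)!>d!$ as $r\to\infty$ for fixed $d$. Your degree bookkeeping ($\psi(d+1,k)_i$ of degree $d(d-1)\cdots(d-i+1)$ in $k$, landing on $d!$ at $i=d-1$, and $\dim\Mc_{3,M}=O(M^3)$) matches the paper's asymptotics $\psi(d,k)_i\sim(\psi(d,k)_{i-1})^{d-i}$ and $\dim\Mc_{3,\psi(d,k)_{d-2}}\sim k^{3(d-1)!}$.

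The one place you genuinely simplify the argument is by working only with the upper bound $\FW(r)\le 2\lfloor\Phi(d+1,r-d-2)/2\rfloor+1\le\Phi(d+1,r-d-2)+1$, rather than trying to show that the minimum $\min_{d'+k'+1=r}\Phi(d',k')$ is actually attained at (or equals the value at) the pair $(d+1,r-d-2)$, as the paper does via its intermediate Claim on a threshold function $\rho(d)$. That intermediate claim concerns only the comparison $\Phi(d,k)\le\Phi(d-1,k+1)$ between \emph{adjacent} pairs; passing from it to the asserted equality $\FW(r)=\frac{(r-1)!}{(d+1)!}+1$ requires an additional argument handling pairs $(d',k')$ with $d'>d+1$, which you avoid altogether since a lower bound on $B(r)/\FW(r)$ only needs an upper bound on $\FW(r)$. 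This is the cleaner route to the stated inequality. Your plan for taming the iterated ceilings with a crude estimate $\psi(d+1,k)_i\le(k+C_d)^{d(d-1)\cdots(d-i+1)}$ is sound and sufficient to extract an effective $\varphi$.
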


\begin{remark}\mbox{}
    \begin{enumerate}
    	\item As remarked above, Brauer's bound $B(r)$ gives the best prior general bound once $r\ge 7$; in this range, Theorem~\ref{t:compare} shows that $\FW$ is the best current bound. For $r=6$, Sylvester \cite{Sy} proved that the bound $n=44$ is sufficient, while for $r=5$, Segre and Dixmier proved that $n=9$ suffices.  In Appendix~\ref{a:bounds}, we give explicit computations of $\FW(r)$ for $r$ up to 15 (at which point $\FW(r)$ is approximately 3.6 billion). In particular, we see that $\FW(5)=9$ recovers the Hilbert-Wiman-Segre-Dixmier bound, and $\FW(6)=41$ improves Sylvester. 
        \item We do not expect that the upper bounds of Theorem~\ref{thm:beatbrauer} are themselves sharp for two reasons: first, we expect that further optimizations to the present method should be possible; and second, we have not made contact in this paper with the methods introduced by Sylvester and Hammond \cite{Sy,SH1,SH2} in their study of Hamilton's work \cite{Ham}.
    \end{enumerate}
\end{remark}

It remains to prove Theorems~\ref{thm:beatbrauer} and \ref{t:compare}.
\subsection{Proof of Theorem~\ref{thm:beatbrauer}}
Our proof follows the strategy outlined by Hilbert \cite{Hi}. We recall a classical lemma on quadrics.
\begin{lemma}\label{l:quads}
    Let $K$ be a field of characteristic 0, let $K\subset \overline{K}$ be an algebraic closure, and let $K^{2\text{-solv}}\subset \overline{K}$ denote the quadratic closure of $K$. For any smooth quadric $Q$ over $K$, with maximal isotropic Grassmannian $\Gr(Q)$, the inclusion
    \[
        \Gr(Q)(K^{2\text{-solv}})\subset\Gr(Q)(\overline{K})
    \]
    is Zariski dense. Moreover, for any $x\in \Gr(Q)(K^{2\text{-solv}})$, the associated Severi-Brauer variety over $K^{2\text{-solv}}$ is trivial.
\end{lemma}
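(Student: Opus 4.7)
The plan is to produce a single maximal isotropic subspace $P \subset Q$ defined over $L := K^{2\text{-solv}}$, and then bootstrap this to Zariski density via the transitive action of the orthogonal group $\O(Q)$. The property of $L$ that drives everything is that $L$ is \emph{quadratically closed}: for any $a \in L^\times$, the extension $L(\sqrt{a})/L$ is a $2$-extension, hence by the defining property of $L$ (as a maximal $2$-extension of $K$ inside $\overline{K}$) we already have $\sqrt{a} \in L$.

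To exhibit a maximal isotropic over $L$, I will induct on $\dim Q$. In characteristic $0$, diagonalize $Q$ over $K$ as $q = \sum_{i=0}^n a_i x_i^2$ with $a_i \in K^\times$. Then $v := (1, \sqrt{-a_0/a_1}, 0, \ldots, 0)$ lies in $L^{n+1}$ with $q(v) = 0$, so $[v] \in Q(L)$. The residual quadric $Q' := (v^\perp/\langle v \rangle, \overline{q})$ is a smooth quadric of dimension $\dim Q - 2$ defined over $L$; by induction $Q'$ contains a maximal isotropic subspace over $L$, whose preimage in $v^\perp \subset L^{n+1}$ defines a maximal isotropic subspace $P \subset Q$ defined over $L$ and containing $\langle v \rangle$.

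For Zariski density, each geometric component of $\Gr(Q)$ is a homogeneous space under $\O(Q)$, a smooth linear algebraic group over $K$ whose identity component is a rational variety. Since $L$ is infinite, $\O(Q)(L)$ is therefore Zariski dense in $\O(Q)_{\overline{K}}$, so the orbit $\O(Q)(L)\cdot x$ of any $x \in \Gr(Q)(L)$ is Zariski dense in the component of $\Gr(Q)_{\overline{K}}$ containing $x$; when $Q$ has even projective dimension and $\Gr(Q)$ breaks into two geometric components, a reflection through any $L$-rational anisotropic vector lies in $\O(Q)(L)$ and swaps them, giving dense $L$-points on both. Finally, an $L$-point $x \in \Gr(Q)(L)$ corresponds under the modular description of the orthogonal Grassmannian to an honest $L$-linear subspace $P_x \subset \Pb^n_L$ contained in $Q$, so the fiber $\Lc_x = P_x \cong \Pb^k_L$ is the trivial Severi-Brauer variety over $L$. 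The main technical subtlety is in the inductive step, where one must verify that the residual quadric remains smooth (a consequence of Witt's extension theorem in characteristic $0$) and carefully handle the two-component structure of $\Gr(Q)$ for even-dimensional quadrics.
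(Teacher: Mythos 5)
Your proof is correct, but it takes a genuinely different route from the paper's. The paper makes the maximal isotropic completely explicit in one shot: diagonalize $Q$ over $K$ as $\sum a_i x_i^2$, pass to $L = K(\sqrt{a_1},\ldots,\sqrt{a_n})$ and rescale to the sum-of-squares form $\sum y_i^2$, then adjoin $\sqrt{-1}$ and write down the isotropic subspace $\{y_{2i-1} = \sqrt{-1}\,y_{2i}\}$. You instead run an induction via Witt reduction: find a single isotropic vector $v$ over a quadratic extension, pass to the residual quadric on $v^\perp/\langle v\rangle$, and recurse. Both constructions land inside a $2$-extension of $K$. The paper's version is lighter and makes explicit exactly which square roots are adjoined; yours is more structural and, as a bonus, you are more careful than the paper about the two-component structure of $\Gr(Q)$ when the form has even rank (you note that a reflection through an anisotropic $L$-vector swaps the components, which the paper leaves implicit). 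Two small remarks. First, smoothness of the residual quadric is elementary nondegeneracy of the induced form, not an application of Witt's extension theorem as you suggest. Second, both you and the paper's proof implicitly use that $K^{2\text{-solv}}$ is closed under adjoining square roots (equivalently, that it is the maximal pro-$2$ extension of $K$); this is the reading you adopt in your parenthetical, and it is the one that makes the argument go through, but note that it is not literally the same as "the fixed field of a $2$-Sylow of $\Gal(\overline{K}/K)$" as written in the lemma statement — the fixed field of a $2$-Sylow is a field whose absolute Galois group is pro-$2$, which typically does \emph{not} contain $K(\sqrt{a})$ for $a \in K$. You resolved this ambiguity correctly.
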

\begin{proof}
    The proof is classical, and goes back at least to work of Sylvester. Recall that by completing the squares, every nonsingular, definite quadratic form $Q$ over $K$ admits a $K$-rational change of coordinates to one of the form
    \begin{equation}\label{eq:quad}
        Q'(x_1,\ldots,x_n)=a_1x_1^2+\cdots +a_n x_n^2
    \end{equation}
    for $a_i\in K^{\times}$. For example, see \cite{Fo} for explicit formulas for the $a_i$ in terms of minors of the matrix associated to the quadratic form (n.b. Fort states the results for real definite forms, but the method holds over any base field).

    Let $L=K(\sqrt{a_1},\ldots,\sqrt{a_n})\subset K^{2\text{-solv}}$. The $L$-rational change of coordinates
    \begin{equation*}
        x_i=:\frac{y_i}{\sqrt{a_i}}
    \end{equation*}
    converts the above quadratic form \eqref{eq:quad} to
    \begin{equation*}
        Q''(y_1,\ldots,y_n)=y_1^2+\cdots+y_n^2.
    \end{equation*}
    Finally, let $L'=L(\sqrt{-1})\subset K^{2\text{-solv}}$. Then the quadratic form $Q''$ vanishes identically on the linear subspace $\Lambda$ defined by
    \begin{align*}
        y_{2i-1}=\sqrt{-1}y_{2i}
    \end{align*}
    for $i=1,\ldots,\lfloor\frac{n}{2}\rfloor$. Counting the dimension, $\Lambda$ is a maximal isotropic, i.e.
    \begin{equation*}
        \Lambda\in \Gr(Q)(L')\subset\Gr(Q)(K^{2\text{-solv}}).
    \end{equation*}
    Using that $\Gr(Q)$ is a homogeneous space for the algebraic group $O(Q)$, and that $K$ (and thus $L'$) is an infinite field, we conclude that the $O(Q)(L')$ orbit of $\Lambda$ is dense in $\Gr(Q)(\bar{K})$ as claimed. Finally, because $\Lambda$ has an $L'$ point (e.g. for $n$ even $[y_1:\cdots:y_n]=[\sqrt{-1}:1:\cdots:\sqrt{-1}:1]$, with the analogous formula if $n$ is odd), the Severi-Brauer variety associated to $\Lambda$ over $L'$ splits completely. We conclude the same for every point in the $\O(Q)(L')$ orbit of $\Lambda$.
\end{proof}

\begin{corollary}\label{c:quads}
    Let $X$ be a variety over a field $K$ of characteristic 0. For any generically smooth family of quadrics $Q\to X$, the solvable multi-sections of $\Gr(Q)\to X$ are Zariski dense in $\Gr(Q)(\bar{K(X)})$.
\end{corollary}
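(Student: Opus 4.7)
The plan is to reduce immediately to the generic fiber and invoke Lemma~\ref{l:quads} there. Since $Q\to X$ is generically smooth, we may shrink $X$ to a dense open on which $Q\to X$ is smooth without changing either the set of multi-sections (up to birational equivalence) or the space $\Gr(Q)(\overline{K(X)})$. The generic fiber $Q_\eta$ is then a smooth quadric over the field $K(X)$, and its maximal isotropic Grassmannian is canonically $\Gr(Q)_\eta$, whose $\overline{K(X)}$-points agree with $\Gr(Q)(\overline{K(X)})$.

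Applying Lemma~\ref{l:quads} to $Q_\eta$ over $K:=K(X)$, the inclusion
\[
    \Gr(Q)_\eta(K(X)^{2\text{-solv}})\subset \Gr(Q)_\eta(\overline{K(X)})
\]
is Zariski dense. Any point $\xi\in \Gr(Q)_\eta(K(X)^{2\text{-solv}})$ is defined over some finite extension $L_\xi/K(X)$ contained in $K(X)^{2\text{-solv}}$, hence $L_\xi/K(X)$ is solvable. Spreading out, choose an integral model $Y_\xi\to X$ for $L_\xi/K(X)$; the point $\xi$ gives a morphism $Y_\xi\dashrightarrow \Gr(Q)$ over $X$, and we let $U_\xi\subset \Gr(Q)$ be the Zariski closure of its image. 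By construction $U_\xi\to X$ is generically finite and dominant, with $K(U_\xi)=L_\xi$ a solvable extension of $K(X)$; thus $U_\xi$ is a solvable rational multi-section of $\Gr(Q)\to X$ in the sense of Definition~\ref{d:rddom} (with $\RD(U_\xi\to X)\le 1$).

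Density follows immediately: the generic points of the $U_\xi$ are precisely the points $\xi\in\Gr(Q)_\eta(K(X)^{2\text{-solv}})$, and these form a Zariski dense subset of $\Gr(Q)_\eta(\overline{K(X)})=\Gr(Q)(\overline{K(X)})$ by Lemma~\ref{l:quads}. Since the union of the $U_\xi$ meets the generic fiber in a dense subset, its Zariski closure equals $\Gr(Q)$, proving the claimed density of solvable multi-sections. The only subtlety is the spreading-out step, and it is routine: every point of $\Gr(Q)_\eta$ over a finite extension $L/K(X)$ extends, after shrinking $X$, to a rational multi-section with function field $L$, and solvability of $L/K(X)$ is preserved.
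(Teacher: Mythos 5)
Your proposal is correct, and it spells out precisely the argument the paper leaves implicit: the corollary appears with no proof, as an immediate consequence of Lemma~\ref{l:quads}, and your reduction-to-the-generic-fiber plus spreading-out is the intended route. Two small remarks, neither of which affects correctness. First, the closure $U_\xi$ of the image of $Y_\xi\dashrightarrow\Gr(Q)$ may have function field strictly smaller than $L_\xi$ (if $Y_\xi\to U_\xi$ has degree $>1$); but a subextension of a solvable extension is still solvable, so $U_\xi$ remains a solvable multi-section and your conclusion stands. Second, the paper's convention allows $X$ to be reducible, in which case $K(X)$ is a product of fields and Lemma~\ref{l:quads} must be applied to one irreducible component at a time; this is routine but worth a phrase since the lemma is stated only over a field.
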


\begin{proof}[Proof of Theorem~\ref{thm:beatbrauer}]
    Because $\FW$ is a monotone increasing function (by Lemma~\ref{l:inc}), if $n\ge \FW(r)$, then $n-1\ge \FW(r-1)$. We can therefore induct on $r$.

    For $n\le 4$, solutions in radicals imply $\RD(n)=1$.  That $\RD(n)\le n-4$ for $n\ge 5$ follows from Bring \cite{Bri} and Hamilton \cite{Ham}. We reprove this Bring-Hamilton bound as the base of our induction, in order to show the uniform general method; simple modifications of the below can be used to rederive the bound $\FW(r)$ for $r\le 3$.  
    
    For $n\ge 5$ we have a generically smooth family of quadrics $T_{12}\to \Ab^n_{\a}$ (by Lemma~\ref{l:quadsmooth}) of dimension at least 2. By Lemma~\ref{l:quads}, there exists a solvable branched cover
    \[
    	U_1\to \Ab^n_\a
    \]
    with a map over $\Ab^n_\a$ to the relative Grassmannian of maximal isotropics $\Gr(T_{12})$, i.e. there exists a linear embedding
    \[
    	\mathcal{L}\colon U_1\times\Pb^{\lfloor \frac{n-3}{2}\rfloor}\to T_{12}|_U. 
    \]
    Because $n\ge 5$, the dimension of the linear subspaces is at least 1. We can therefore intersect with $T_3|_{U_1}$ to get a rational map
    \begin{align*}
    	U_1&\dashrightarrow \Ab^3\\
    	u&\mapsto \mathcal{L}(u)\cap T_3.
    \end{align*}
    Adjoining the solution of this family of cubics, we get a solvable branched cover
    \[
    		U_2\to U_1
    \]
    and a map $U_2\to T_{123}$. By Lemma~\ref{l:tschirn}, we conclude that 
    \begin{align*}
	    	\RD(n)&\le \max\{\RD(U_2\to \Ab^n_{\a}),\dim(\Ab^{n-3}_{a_1=a_2=a_3=0})-1\}\\
	    	&=\max\{1,n-4\}=n-4.
    \end{align*}
    
    For the induction step, let $r\ge 5$ and assume that we have shown that for all $s<r$, $n\ge \FW(s)$ implies that $\RD(n)\le n-s$. Let $n\ge \FW(r)$. Note that if $\min_{d+k+1=r}\Phi(d,k)$ is odd, then the definition of $\FW$ implies that
    \begin{align*}
        \FW(r)&=\min_{d+k+1=r}\Phi(d,k).\intertext{Conversely, if $\min_{d+k+1=r}\Phi(d,k)$ is even, then}
        \FW(r)&=\min_{d+k+1=r}\Phi(d,k)+1.\intertext{Consequently, if $n$ is odd, then}
        n&\ge \min_{d+k+1=r}\Phi(d,k),\intertext{while if $n$ is even}
        n&\ge \min_{d+k+1=r}\Phi(d,k)+1.
    \end{align*}

    Let $(d,k)$ be such that
    \[
        \Phi(d,k)=\min_{d'+k'+1=r}\Phi(d',k').
    \]
    If $n$ is odd (and thus $n\ge \Phi(d,k)$), we will explicitly construct a rational multi-section
     \[
        U\to T_{1\cdots d+k}
     \]
     for $T_{1\cdots d+k}\to \Ab^n_{\a}$ with
     \[
        \RD(U\to\Ab^n_{\a})\le \max\{\RD(\frac{(d+k)!}{d!}),\dim(\Mc_{3,\psi(d,k)_{d-2}})\}.
     \]
     If $n$ is even (and thus $n\ge \Phi(d,k)+1$), {\em mutatis mutandis} the same argument will produce a rational multi-section
     \[
        U\to T'_{1\cdots d+k}
     \]
     with $\RD(U\to\Ab^n_{\a})\le \max\{\RD(\frac{(d+k)!}{d!}),\dim(\Mc_{3,\psi(d,k)_{d-2}})\}$.
     \medskip
     \newline
     \noindent
     {\bf Case 1: $n$ odd.} Let $U_1=\Ab^n_{\a}$. By Lemma~\ref{l:quadsmooth}, the family $T_{12}\to\Ab^n_{\a}$ is generically smooth. By Corollary~\ref{c:Hfix}, there exists a dense open $V\subset \Gr(T_{12})$, such that
     \[
        \Lc|_V\times_{\Pb^n_{\Ab^n_{\a}}} T_{123}\to \Ab^n_{\a}
     \]
     is smooth (i.e. for the generic polynomial, the intersection of $T_{123}(\a)$ with a generic maximal isotropic in $T_{12}(\a)$ is smooth).

     By Corollary~\ref{c:quads},
     \[
        \RD(V\to\Ab^n_{\a})=1
     \]
     More precisely, there exists a multi-section $U_2\subset V$ such that $U_2\to U_1$ is a solvable cover of its image, and such that
     \[
        \Lc|_{U_2}\cong\Pb^{\frac{n-3}{2}}_{U_2}.
     \]
     Now, by Lemma~\ref{l:dim} and our assumption on $n$,
     \begin{align*}
        n&\ge \Phi(d,k)\ge \psi(d,k)_{d-1}+2\\
        &=2\psi(d,k)_{d-2}+3.\intertext{Therefore,}
        \frac{n-3}{2}&\ge \psi(d,k)_{d-2}
     \end{align*}
     If $\frac{n-3}{2}=\psi(d,k)_{d-2}$, then we obtain a map
     \begin{align*}
        U_2&\to\Mc_{3,\psi(d,k)_{d-2}}\\
        x&\mapsto \Lc|_x\times_{\Pb^{n-2}}T_{123}|_x.
     \end{align*}
     If $\frac{n-3}{2}>\psi(d,k)_{d-2}$, by the Bertini Theorem for isotropics (Proposition~\ref{p:bertini}), there exists a dense open
     \[
        V'\subset\Gr(\psi(d,k)_{d-2},\Lc|_{U_2})
     \]
     such that the family of cubic hypersurfaces in $\Pb^{\psi(d,k)_{d-2}}$ given by
     \[
        V'\times_{\Lc|_{U_2}}(T_{123}\times_{\Pb^{n-2}_{U_1}}\Lc|_{U_2})\to U_2
     \]
     is generically smooth. Because rational points are dense in Grassmannians, perhaps after shrinking $U_2$, we obtain a section $U_2\to V'$. As above, we again obtain a map
     \begin{align*}
        U_2\to^{\cap T_{123}} \Mc_{3,\psi(d,k)_{d-2}}.
     \end{align*}
    Note that, from the construction above, $\RD(U_2\to U_1)=1$.

    By Waldron's Theorem (Theorem~\ref{t:rplanesurj}) and the definition of the numbers $\psi(d,k)_i$, the map
    \begin{equation*}
        \Mc_{3,\psi(d,k)_{d-2}}^{\psi(d,k)_{d-3}}\to\Mc_{3,\psi(d,k)_{d-2}}
    \end{equation*}
    is surjective. Therefore, the map
    \[
        \Mc_{3,\psi(d,k)_{d-2}}^{\psi(d,k)_{d-3}}|_{U_2}\to U_2
    \]
    is surjective, and by Lemma~\ref{l:surj},
    \[
        \RD(\Mc_{3,\psi(d,k)_{d-2}}^{\psi(d,k)_{d-3}}|_{U_2}\to U_2)\le \dim(\Mc_{3,\psi(d,k)_{d-2}}).
    \]
    Let $U'\subset\Mc_{3,\psi(d,k)_{d-2}}^{\psi(d,k)_{d-3}}|_{U_2}$ be any rational multi-section such that
    \[
        \RD(U'\to U_2)=\RD(\Mc_{3,\psi(d,k)_{d-2}}^{\psi(d,k)_{d-3}}|_{U_2}\to U_2).
    \]
    Let $\bar{\Lc}\to \Mc_{3,\psi(d,k)_{d-2}}^{\psi(d,k)_{d-3}}$ denote the tautological $\psi(d,k)_{d-3}$-plane bundle. By the Merkurjev-Suslin Theorem \cite[Theorem 16.1]{MS}, there exists a solvable \'etale map $U_3\to U'$ such that
    \[
        \bar{\Lc}|_{U_3}\cong \Pb^{\psi(d,k)_{d-3}}_{U_3}.
    \]
    By Lemma~\ref{lemma:max} and the construction above,
    \[
        \RD(U_3\to U_2)=\max\{\RD(U'\to U_2),1\}\le \dim(\Mc_{3,\psi(d,k)_{d-2}}).
    \]
    Further, intersecting with the Tschirnhaus hypersurface $T_4$, we obtain a map
    \begin{align*}
        U_3&\to^{\cap T_4}\Hc_{4,\psi(d,k)_{d-3}}\\
        x&\mapsto (T_{123}|_x\times_{U_3}\bar{\Lc}|_{U_3})\times_{\Pb^{n-1}_{U_3}} T_4|_{U_3}.
    \end{align*}

    By induction, we now construct, for each $4\le i\le d$, a quasi-finite dominant map
    \[
        U_i\to U_{i-1}
    \]
    such that
    \begin{enumerate}
        \item $\RD(U_i\to U_{i-1})\le \dim(\Hc_{i,\psi(d,k)_{d-i+1}})$,
        \item we have a commuting diagram
                \begin{equation*}
                    \xymatrix{
                        U_i \ar[rr] \ar[d] && \Hc_{i,\psi(d,k)_{d-i+1}}^{\psi(d,k)_{d-i}}\ar[d] \\
                        U_{i-1} \ar[rr]^{\cap T_i} && \Hc_{i,\psi(d,k)_{d-i+1}}
                    }
                \end{equation*}
                with a trivialization
                \[
                    \Lc|_{U_i}\cong \Pb^{\psi(d,k)_{d-i}}_{U_i},
                \]
                where $\Lc\to\Hc_{i,\psi(d,k)_{d-i+1}}^{\psi(d,k)_{d-i}}$ denotes the tautological $\psi(d,k)_{d-i}$-plane bundle;
        \item and the assignment
            \[
                x\mapsto (T_{1\cdots i}|_x\times_{U_i} \Lc_{i,\psi(d,k)_{d-i+1}}|_{U_i})\times_{\Pb^{n-1}_{U_i}} T_{i+1}|_x
            \]
            defines a map
            \[
                U_i\to^{\cap T_{i+1}} \Hc_{i+1,\psi(d,k)_{d-i}}.
            \]
    \end{enumerate}
    The construction proceeds along the same lines as the construction of $U_3$ above. Given $U_{i-1}$ with the map
    \[
        U_{i-1}\to^{\cap T_i}\Hc_{i,\psi(d,k)_{d-i+1}},
    \]
    by the definition of the $\psi(d,k)_j$s and Waldron's Theorem (Theorem~\ref{t:rplanesurj}), the map
    \begin{equation*}
        \Hc_{i,\psi(d,k)_{d-i+1}}^{\psi(d,k)_{d-i}}\to\Hc_{i,\psi(d,k)_{d-i+1}}
    \end{equation*}
    is surjective. Therefore, the map
    \[
         \Hc_{i,\psi(d,k)_{d-i+1}}^{\psi(d,k)_{d-i}}|_{U_{i-1}}\to U_{i-1}
    \]
    is surjective, and by Lemma~\ref{l:surj},
    \[
        \RD(\Hc_{i,\psi(d,k)_{d-i+1}}^{\psi(d,k)_{d-i}}|_{U_{i-1}}\to U_{i-1})\le \dim(\Hc_{i,\psi(d,k)_{d-i+1}}).
    \]
    Let $U'\subset\Hc_{i,\psi(d,k)_{d-i+1}}^{\psi(d,k)_{d-i}}|_{U_{i-1}}$ be any rational multi-section such that
    \[
        \RD(U'\to U_{i-1})=\RD(\Hc_{i,\psi(d,k)_{d-i+1}}^{\psi(d,k)_{d-i}}|_{U_{i-1}}\to U_{i-1}).
    \]
    Let $\Lc\to \Hc_{i,\psi(d,k)_{d-i+1}}^{\psi(d,k)_{d-i}}$ denote the tautological $\psi(d,k)_{d-i}$-plane bundle. By the Merkurjev-Suslin Theorem \cite[Theorem 16.1]{MS}, there exists a solvable \'etale map $U_i\to U'$ such that
    \[
        \Lc|_{U_i}\cong \Pb^{\psi(d,k)_{d-i}}_{U_i}.
    \]
    By Lemma~\ref{lemma:max} and the construction above,
    \[
        \RD(U_i\to U_{i-1})=\max\{\RD(U'\to U_{i-1}),1\}\le \dim(\Hc_{i,\psi(d,k)_{d-i+1}}).
    \]
    Finally, to complete the induction step, we observe that, by intersecting with the Tschirnhaus hypersurface $T_{i+1}$, we obtain a map
    \begin{align*}
        U_i&\to^{\cap T_{i+1}}\Hc_{i+1,\psi(d,k)_{d-i}}\\
        x&\mapsto (T_{1\cdots i}|_x\times_{U_i}\Lc|_{U_i})\times_{\Pb^{n-1}_{U_i}} T_{i+1}|_{U_i}.
    \end{align*}
    This completes the induction step. We have thus constructed a tower of maps
    \[
        U_d\to\cdots \to U_4\to U_3\to U_2\to U_1=\Ab^n_{\a}.
    \]
    Further, from the inductive construction and Lemmas~\ref{lemma:max} and \ref{l:dim}, we have
    \[
        \RD(U_d\to \Ab^n_{\a})\le\dim(\Mc_{3,\psi(d,k)_{d-2}}).
    \]
    Now let $\Lc\to\Hc_{d,\psi(d,k)_1}$ denote the tautological $k$-plane bundle (n.b. $k=\psi(d,k)_0$). Then, by construction, we have an isomorphism
    \[
        \Lc|_{U_d}\cong \Pb^k_{U_d}.
    \]
    For $i_1<\ldots<i_k$, and $N$, let
    \[
        \Hc_{i_1\cdots i_k,N}
    \]
    denote the parameter space of complete intersections of degree $(i_1,\ldots,i_k)$. Let
    \[
        \Ic\to  \Hc_{i_1\cdots i_k,N}
    \]
    denote the tautological family of complete intersections. By Proposition~\ref{p:degdfam},
    \[
        \RD(\Ic\to\Hc_{i_1\cdots i_k,N})\le \RD(i_1\cdots i_k).
    \]
    By our inductive construction, we have a map
    \begin{align*}
        U_d&\to^{\cap T_{(d+1)\cdots(d+k)}}\Hc_{(d+1)\cdots(d+k),k}\\
        x&\mapsto (T_{1\cdots d}|_x\times_{U_d}\Lc|_{U_d})\times_{\Pb^{n-1}_{U_d}} T_{(d+1)\cdots(d+k)}|_{U_d}.
    \end{align*}
    Because, $\Ic\to\Hc_{(d+1)\cdots(d+k),k}$ is surjective, by Lemma~\ref{l:surj},
    \[
        \RD(\Ic|_{U_d}\to U_d)\le\RD(\frac{(d+k)!}{d!}).
    \]
    Let $U_{d+1}\subset\Ic|_{U_d}$ be a rational multi-section of $\Ic|_{U_d}\to U_d$ such that
    \[
        \RD(U_{d+1}\to U_d)\le\RD(\frac{(d+k)!}{d!}).
    \]
    Then, by construction, $U_{d+1}$ carries a canonical map
    \[
        U_{d+1}\to T_{1\cdots(d+k)}
    \]
    making it a rational multi-section of the Tschirnhaus complete intersection. Further, by the above construction and Lemma~\ref{lemma:max},
    \begin{align*}
        \RD(U_{d+1}\to \Ab^n_{\a})&\le\max\{\RD(\frac{(d+k)!}{d!}),\dim(\Mc_{3,\psi(d,k)_{d-2}})\}.
    \end{align*}
    By assumption, $n\ge \FW(r)=\Phi(d,k)\ge \frac{(d+k)!}{d!}+1$. Lemma~\ref{l:inc} thus implies that $\frac{(d+k)!}{d!}\ge \FW(r-1)$.  Therefore, by the inductive hypothesis,
    \[
        \RD(\frac{(d+k)!}{d!})\le \frac{(d+k)!}{d!}-(r-1).
    \]
    Moreover, from the definition of $\Phi(d,k)$, $n\ge\Phi(d,k)$ implies that $n\ge \dim(\Mc_{3,\psi(d,k)_{d-2}})+r$.

    By Lemma~\ref{l:tschirn}, we therefore conclude that
    \begin{align*}
        \RD(n)&\le\max\{\RD(U_{d+1}\to\Ab^n_{\a}),\dim(\ev(T_{1\cdots(d+k)}))-1\}\\
        &\le\max\{\frac{(d+k)!}{d!}-(r-1),\dim(\Mc_{3,\psi(d,k)_{d-2}}),n-r.\}\\
        &=n-r.
    \end{align*}
     \medskip
     \noindent
     {\bf Case 2: $n$ even.} Let $U_1=\Ab^n_{\a}$. By Lemma~\ref{l:quadsmooth}, the family $T'_{12}\to\Ab^n_{\a}$ is generically smooth. By Corollary~\ref{c:Hfix}, there exists a dense open $V\subset \Gr(T'_{12})$, such that
     \[
        \Lc|_V\times_{\Pb^n_{\Ab^n_{\a}}} T'_{123}\to \Ab^n_{\a}
     \]
     is smooth (i.e. for the generic polynomial, the intersection of $T'_{123}(\a)$ with a generic maximal isotropic in $T'_{12}(\a)$ is smooth).

     By Corollary~\ref{c:quads},
     \[
        \RD(V\to\Ab^n_{\a})=1
     \]
     More precisely, there exists a multi-section $U_2\subset V$ such that $U_2\to U_1$ is a solvable cover of its image, and such that
     \[
        \Lc|_{U_2}\cong\Pb^{\frac{n}{2}-2}_{U_2}.
     \]
     Now, by Lemma~\ref{l:dim} and our assumption on $n$
     \begin{align*}
        n-1&\ge \Phi(d,k)\ge \psi(d,k)_{d-1}+2\\
        &=2\psi(d,k)_{d-2}+3.\intertext{Therefore,}
        \frac{n}{2}-2&\ge \psi(d,k)_{d-2}
     \end{align*}
     If $\frac{n}{2}-2=\psi(d,k)_{d-2}$, then we obtain a map
     \begin{align*}
        U_2&\to\Mc_{3,\psi(d,k)_{d-2}}\\
        x&\mapsto \Lc|_x\times_{\Pb^{n-2}}T'_{123}|_x.
     \end{align*}
     If $\frac{n}{2}-2>\psi(d,k)_{d-2}$, by the Bertini Theorem for isotropics (Proposition~\ref{p:bertini}), there exists a dense open
     \[
        V'\subset\Gr(\psi(d,k)_{d-2},\Lc|_{U_2})
     \]
     such that the family of cubic hypersurfaces in $\Pb^{\psi(d,k)_{d-2}}$ given by
     \[
        V'\times_{\Lc|_{U_2}}(T'_{123}\times_{\Pb^{n-2}_{U_1}}\Lc|_{U_2})\to U_2
     \]
     is generically smooth. Because rational points are dense in Grassmannians, perhaps after shrinking $U_2$, we obtain a section $U_2\to V'$. As above, we again obtain a map
     \begin{align*}
        U_2\to^{\cap T'_{123}} \Mc_{3,\psi(d,k)_{d-2}}.
     \end{align*}
    Note that, from the construction above, $\RD(U_2\to U_1)=1$. The remainder of the proof now proceeds exactly as in the case of $n$ odd.
\end{proof}

\subsection{Proof of Theorem~\ref{t:compare}}

\begin{proof}[Proof of Theorem~\ref{t:compare}]
    We deduce the theorem from the following:
    \begin{claim}\label{claim:comp}
        There exists a monotone increasing function $\rho\colon\Nb\to\Nb$ such that
        \begin{enumerate}
            \item for $k\ge \rho(d)$,
                    \begin{align*}
                        \frac{(d+k)!}{d!}+1&=\Phi(d,k)\\
                            &\le \Phi(d-1,k+1)
                        \end{align*}
                    (i.e. both conditions hold for $k\ge \rho(d)$);
            \item for all $k<\rho(d)$, either 
                    \[
                        \Phi(d,k)>\Phi(d-1,k+1).
                    \]
                    or 
                    \[
                    	 \frac{(d+k)!}{d!}+1\neq\Phi(d,k)
                    \]
                    (i.e. $\rho(d)$ is the least integer such that both conditions hold).
        \end{enumerate}
    \end{claim}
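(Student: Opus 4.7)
Proof proposal.

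The plan is to identify $\rho(d)$ as the crossover threshold in $k$ at which the factorial term $\frac{(d+k)!}{d!}+1$ overtakes the geometric term $\dim(\Mc_{3,\psi(d,k)_{d-2}})+d+k+1$ in the expression for $\Phi(d,k)$. The underlying asymmetry is that for fixed $d$ the factorial $\frac{(d+k)!}{d!}=(d+1)(d+2)\cdots(d+k)$ grows super-polynomially in $k$ (by Stirling, $\log\frac{(d+k)!}{d!}\sim k\log k$), whereas the geometric term is polynomially bounded in $k$.

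First I would establish the polynomial bound. Starting from $\psi(d,k)_0=k$, induction on $i$ shows that for each fixed $d$, $\psi(d,k)_i$ is monotone non-decreasing in $k$ and bounded above by a polynomial in $k$: each recursion step replaces $\psi_i$ by at most $\psi_i+\psi_i^{d-i-1}/(d-i)!+1$, multiplying the degree in $k$ by at most $d-i-1$. Thus $\psi(d,k)_{d-2}$ admits a polynomial upper bound of degree at most $(d-1)!$ in $k$, and since $\dim(\Mc_{3,N})=\max\{0,\binom{N+3}{3}-(N+1)^2\}$ is cubic in $N$, the geometric term in $\Phi(d,k)$ is bounded by a polynomial in $k$. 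Combining this with the super-polynomial growth of the factorial, for each fixed $d$ there exists some $k$ at which $F_d(k):=\frac{(d+k)!}{d!}+1$ exceeds $G_d(k):=\dim(\Mc_{3,\psi(d,k)_{d-2}})+d+k+1$. Moreover, once $F_d(k)\ge G_d(k)$ at some $k$, this persists for all larger $k$, since incrementing $k$ by $1$ multiplies $F_d$ by $d+k+1$ while $G_d$ only increases by a lower-order polynomial amount. Hence there is a least integer $k_0(d)$ with $F_d(k)\ge G_d(k)$ for all $k\ge k_0(d)$, and by persistence $F_d(k)<G_d(k)$ for all $k<k_0(d)$.

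Call the inequality $F_d(k)\ge G_d(k)$ condition (a); it is equivalent to $\Phi(d,k)=\frac{(d+k)!}{d!}+1$. Whenever (a) holds, condition $\Phi(d,k)\le\Phi(d-1,k+1)$ of the claim, call it (b), is automatic:
\[
    \Phi(d,k)=\frac{(d+k)!}{d!}+1\le d\cdot\frac{(d+k)!}{d!}+1=\frac{(d+k)!}{(d-1)!}+1\le\Phi(d-1,k+1),
\]
valid for $d\ge 2$ since $\Phi(d-1,k+1)$ majorizes its own factorial component. Setting $\rho(d):=k_0(d)$, condition (1) of the claim is exactly what was established for $k\ge k_0(d)$, and condition (2) follows from the failure of (a) at each $k<k_0(d)$ noted above.

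Finally I would verify monotonicity of $\rho$. The recursion for parameter $d+1$ runs one step further than for $d$, and at the added final step the relevant binomial is cubic in $\psi$, so $\psi(d+1,k)_{d-1}$ exceeds roughly $\psi(d,k)_{d-2}^{\,2}$ for large $k$; consequently $G_{d+1}(k)$ grows polynomially of strictly higher degree in $k$ than $G_d(k)$, while the factorial ratio $F_{d+1}(k)/F_d(k)=(d+k+1)/(d+1)$ grows only linearly in $k$. A direct comparison of the ratios $F_d/G_d$ and $F_{d+1}/G_{d+1}$ then shows that condition (a) at parameter $d+1$ forces condition (a) at parameter $d$ for the same $k$, so $k_0(d+1)\ge k_0(d)$. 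The main obstacle I expect is precisely this ratio comparison: it requires quantifying the gap between the polynomial degrees of successive geometric terms in the presence of the ceiling functions in the recursion for $\psi$, which I would handle by tracking explicit polynomial \emph{lower} bounds on $\psi(d,k)_i$ alongside the upper bounds used earlier.
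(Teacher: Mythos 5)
Your proposal is correct and follows essentially the same route as the paper: show $\psi(d,k)_{d-2}$ (and hence $\dim(\Mc_{3,\psi(d,k)_{d-2}})$) is polynomially bounded in $k$ for fixed $d$, invoke Stirling to see the factorial term grows super-polynomially, and define $\rho(d)$ as the crossover threshold; both you and the paper treat the exact monotonicity and numerics of $\rho$ somewhat heuristically, with the paper explicitly demurring that ``precise formulas for $\rho$ require a more detailed analysis.'' One small improvement in your version worth retaining: your deduction of $\Phi(d,k)\le\Phi(d-1,k+1)$ from $\Phi(d,k)=\frac{(d+k)!}{d!}+1$ uses only that $\Phi(d-1,k+1)$ majorizes its own factorial component, whereas the paper obtains the inequality by applying the asymptotic estimate at $(d-1,k+1)$ as well -- your observation is cleaner and removes a dependency.
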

    Granting the claim, let $\varphi(d):=\rho(d+1)+d+2$. From Definition~\ref{d:FW}, we see that $\rho(3)=2$, and thus $\varphi(2)=6$. However, $\FW(5)=9$ while $B(5)=25$, so we can modify $\varphi$ by setting $\varphi(2):=5$ as claimed. Moreover, for $r\ge\varphi(d)$, we have
    \begin{align*}
        k&:=(r-1)-(d+1)\\
        &\ge \varphi(d)-(d+2)\\
        &\ge \rho(d+1)\intertext{As a result,}
        \FW(r)&=\frac{(r-1)!}{(d+1)!}+1\intertext{and therefore,}
        B(r)/\FW(r)&=\frac{(r-1)!+1}{(r-1)!/(d+1)!+1}\\
        &\ge d!
    \end{align*}
    We now prove Claim~\ref{claim:comp} by asymptotic estimates; more precisely, we show that for each $d$, $\dim(\Mc_{3,\psi(d,k)_{d-2}})$ grows polynomially in $k$, while $\frac{(d+k)!}{d!}$ grows superexponentially. Precise formulas for the function $\rho$ require a more detailed analysis.

    Continuing to follow Notation~\ref{n:beatbrauer}, we claim the following:
    \begin{claim}\label{claim:beatbrauer}
        Fix $d$.  Then as a function of $k$,
        \begin{equation*}
            \Oc((d+k)!)\ge\max\{\Oc(\dim(\Mc_{3,\psi(d,k)_{d-2}})),\Oc(\dim(\Mc_{3,\psi(d-1,k+1)_{d-3}}))\}),
        \end{equation*}
        where $\Oc(f)$ denotes the asymptotic growth of a function $f$.
    \end{claim}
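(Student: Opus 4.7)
The plan is to show that, for fixed $d$, both $\psi(d,k)_{d-2}$ and $\psi(d-1,k+1)_{d-3}$ grow polynomially in $k$, so the corresponding moduli-space dimensions are also polynomially bounded in $k$, whereas $(d+k)!$ grows superexponentially and dominates any polynomial.

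First, I would prove by induction on $i$ that, with $d$ fixed, $\psi(d,k)_i$ is bounded above by a polynomial in $k$. The base case $\psi(d,k)_0 = k$ is immediate. For the inductive step, the recursion
\[
\psi(d,k)_{i+1} = \left\lceil \psi(d,k)_i + \binom{\psi(d,k)_i + d-i}{\psi(d,k)_i}/(\psi(d,k)_i + 1) \right\rceil
\]
adds to $\psi(d,k)_i$ the quantity $\binom{N+(d-i)}{d-i}/(N+1) = (N+2)(N+3)\cdots(N+d-i)/(d-i)!$ evaluated at $N = \psi(d,k)_i$, which is a polynomial in $N$ of degree $d-i-1$. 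Hence if $\psi(d,k)_i = \Oc(k^{\delta_i})$, then $\psi(d,k)_{i+1} = \Oc(k^{\delta_i(d-i-1)})$. Starting from $\delta_0 = 1$, iteration yields $\delta_{d-2} \le (d-1)!$, so $\psi(d,k)_{d-2} = \Oc(k^{(d-1)!})$; the same argument (with $(d,k)$ replaced by $(d-1,k+1)$) polynomially bounds $\psi(d-1,k+1)_{d-3}$.

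Second, since $\dim(\Mc_{3,N}) \le \dim(\Hc_{3,N}) = \binom{N+3}{3} - 1 = \Oc(N^3)$, the polynomial bounds from Step~1 yield polynomial bounds on $\dim(\Mc_{3,\psi(d,k)_{d-2}})$ and $\dim(\Mc_{3,\psi(d-1,k+1)_{d-3}})$ as functions of $k$.

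Finally, $(d+k)!$ contains $k!$ as a factor, so $(d+k)! \ge k!$, which grows faster than any polynomial in $k$. Thus $(d+k)!$ dominates the polynomial bounds of Step~2, establishing the claim. The only step requiring real care is the bookkeeping of polynomial degrees through the iterated recursion in Step~1; however, since only a comparison between polynomial and factorial growth is needed, crude upper bounds suffice and no sharp tracking is required, so the main obstacle is really clerical rather than substantive.
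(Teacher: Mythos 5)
Your proposal is correct and follows essentially the same route as the paper: both iterate the recursion for $\psi(d,k)_i$ to show it grows at most polynomially in $k$ (with exponent $\delta_{d-2}=(d-1)!$), bound $\dim(\Mc_{3,N})$ by $\dim(\Hc_{3,N})=\Oc(N^3)$, and then compare polynomial to factorial growth. The one (cosmetic) difference: the paper runs the final comparison through Stirling's formula, comparing $\ln$ of both sides, whereas you simply observe $(d+k)!\ge k!$, which is super-polynomial; your version is a touch more elementary but the substance is identical. One small clerical note: you write $\delta_{d-2}\le (d-1)!$, but in fact the recursion gives $\delta_{i}=(d-1)(d-2)\cdots(d-i)$ so $\delta_{d-2}=(d-1)!$ exactly, matching the paper's $\dim(\Mc_{3,\psi(d,k)_{d-2}})\sim k^{3(d-1)!}$; the inequality is harmless since you only need an upper bound, but it obscures that the estimate is tight.
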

    Granting the claim, we see that for $k>>d$,
    \begin{equation*}
        \Phi(d,k)=\frac{(d+k)!}{d!}<\frac{(d+k)!}{(d-1)!}=\Phi(d-1,k+1).
    \end{equation*}
    Note that by definition,
    \begin{align*}
        \Phi(d,k)&=\max\{\frac{(d+k)!}{d!}+1,\dim(\Mc_{3,\psi(d,k)_{d-2}})+d+k+1\}
    \end{align*}
    Therefore Claim~\ref{claim:comp} follows from Claim~\ref{claim:beatbrauer}. To prove Claim~\ref{claim:beatbrauer}, recall Stirling's formula (cf. \cite{Ro})
    \begin{equation*}
        \sqrt{2\pi}m^{m+\frac{1}{2}}e^{\frac{1}{12m+1}-m}\le m!\le \sqrt{2\pi}m^{m+\frac{1}{2}}e^{\frac{1}{12m}-m}
    \end{equation*}
    This implies that
    \begin{equation*}
        \Oc(\ln((d+k)!))=\Oc((d+k+\frac{1}{2})\ln(d+k)).
    \end{equation*}
    It suffices to prove that
    \begin{equation*}
        \max\{\Oc(\dim(\Mc_{3,\psi(d,k)_{d-2}})),\Oc(\dim(\Mc_{3,\psi(d-1,k+1)_{d-3}}))\}=\Oc(k^{\alpha_d})
    \end{equation*}
    for some $\alpha_d$, as then
    \begin{align*}
        \max\{\Oc(\ln(\dim(\Mc_{3,\psi(d,k)_{d-2}}))),\Oc(\ln(\dim(\Mc_{3,\psi(d-1,k+1)_{d-3}})))\}&=\Oc(\alpha_d\cdot \ln(k))\\
        &\le \Oc((d+k+\frac{1}{2})\ln(d+k))\\
        &=\Oc(\ln(d+k)!).
    \end{align*}

    Recall that $\psi(d,k)_0=k$ and for $i>0$,
    \begin{align*}
        \psi(d,k)_i&=\lceil\psi(d,k)_{i-1}+\binom{\psi(d,k)_{i-1}+d-(i-1)}{\psi(d,k)_{i-1}}/(\psi(d,k)_{i-1}+1)\rceil.\intertext{Therefore}
        \psi(d,k)_i&\sim \frac{(d-i+1+\psi(d,k)_{i-1})\cdots(\psi(d,k)_{i-1}+2)}{(d-i+1)!}\sim (\psi(d,k)_{i-1})^{d-i}.\intertext{Because $\psi(d,k)_1\sim k^{d-1}$, by induction, we obtain}
        \dim(\Hc_{d-i,\psi(d,k)_{i+1}})&=\binom{d-i+\psi(d,k)_{i+1}}{\psi(d,k)_{i+1}}-1\\
        &\sim \psi(d,k)_{i+1}^{d-i}\\
        &\sim k^{(d-i)\frac{(d-1)!}{(d-i-2)!}}.\intertext{Similarly,}
        \dim(\Mc_{3,\psi(d,k)_{d-2}})&\sim k^{3(d-1)!}.\intertext{By the same argument,}
        \dim(\Mc_{3,\psi(d-1,k+1)_{d-3}})&\sim (k+1)^{3(d-2)!}\sim k^{3(d-2)!},\intertext{and, thus, as functions of $k$,}
        \Oc((d+k)!)&\ge\max\{\Oc(\dim(\Mc_{3,\psi(d,k)_{d-2}})),\Oc(\dim(\Mc_{3,\psi(d-1,k+1)_{d-3}}))\}
    \end{align*}
    as claimed.
\end{proof}

\newpage

\appendix
\section{Explicit Bounds}\label{a:bounds}
\begin{table}[ht]
\caption{Upper Bounds on $\RD(n)$}
\centering
\begin{tabular}{c c c c c c c}
\hline\hline
$r$ & $\FW(r)$ & Best Prior Bound $B'(r)$ & Source of $B'(r)$  & $B'(r)/\FW(r)$ & $(d,k)$ \\[0.5ex]
\hline
2 &     3 &          3          & Babylonians           & 1          &        \\
3 &     4 &          4          & Ferrari               & 1          &        \\
4 &     5 &          5          & Bring \cite{Bri}      & 1          & (2,1)    \\
5 &     9 &          9          & Segre \cite{Seg1}       & 1          & (3,1)  \\
6 &     41 &         44         & Sylvester \cite{Sy}   & 1.07          & (3,2) \\
7 &     121 &        721        & Brauer \cite{Br}      & 5.95      & (3,3) \\
8 &     841 &        5041       & ''                    & 5.99       & (3,4)\\
9 &     6721 &       40321      & ''                    & 5.99      & (3,5)  \\
10 &    60481 &      362881     & ''                    & 5.99      & (3,6)   \\
11 &    604801 &     3628801    & ''                    &   5.99      & (3,7) \\
12 &    6652801 &    39916801   & ''                    &  5.99      & (3,8)  \\
13 &    78485043 &   $12!+1$    & ''                    &   6.10 &      (4,8) \\
14 &    320082459 &  $13!$+1    & ''                    &   19.45 &     (4,9) \\
15 &    3632428801 & $14!$+1    & ''                    &   24 &        (4,10)\\ [1ex]
\hline
\end{tabular}\label{table:bounds}
\caption{In the rightmost column above, $k$ is the dimension of the linear subspace on the degree $d$ hypersurface that we use to construct the necessary Tschirnhaus transformation, e.g. for $r=5$,  $(d,k)=(3,1)$ and we are using a line on a cubic surface  {\em \`{a} la} Hilbert to prove $\FW(5)=9$.}
\end{table}

\section{Historical Background.}\label{a:hist}
	\begin{quote}
		``The theory has been a plant of slow growth.''
		\attrib{Sylvester and Hammond, 1887\footnote{\cite[p. 286]{SH1}}}
	\end{quote}
	Tschirnhaus \cite{Ts} introduced his transformation
	to show that $\RD(n)\le n-3$, improving upon the linear change of variables used by the Babylonians to set the first coefficient of the general polynomial to 0. A century later, Bring \cite{Bri} improved this for $n=5$ to show that $\RD(5)=1$. Hamilton \cite{Ham} was the first to show that
	\begin{equation*}
	\lim_{n\to\infty} n-\RD(n)=\infty.
	\end{equation*}
	More precisely, he showed the existence a monotone increasing function $H\colon\Nb\to\Nb$, such that $n-\RD(n)\ge r$ for $n\ge H(r)$.\footnote{The numbers $H(r)$ are listed as the ``Hamilton numbers'' in the {\em Online Encyclopedia of Integer Sequences}.} Hamilton computed the initial values of $H$ (for $r\le 7$). Five decades later, Sylvester \cite{Sy} extended Hamilton's computations to give:
	\begin{equation*}
	\begin{array}{c|c|c|c|c|c|c}
	r & 4 & 5 & 6 & 7 & 8 & 9 \\ \hline
	H(r) & 5 & 11 & 47 & 923 & 409,619 & 83,763,206,255
	\end{array}
	\end{equation*}
	Sylvester then sharpened Hamilton's bounds slightly (see \cite[p. 485]{Sy})\footnote{Writing $S(r)$ for Sylvester's sharpening, the initial values are $S(4)=5$, $S(5)=10$, $S(6)=44$, $S(7)=905$.}, and Sylvester and Hammond \cite{SH1}, \cite{SH2} gave a generating function for $H$.
	
	Preceding Sylvester (and apparently unbeknownst to him at the time of \cite{SH1}), Klein \cite{KleinFirst} initiated a new approach to solving polynomials, linking it with group theory, representation theory, projective geometry, classical invariant theory, and the theory of elliptic and automorphic functions. Fundamental to Klein's vision was the goal of reducing a given algebraic function to a simplest possible ``normal form'', with the ideal being a normal form given by the action of the monodromy group of the function on a projective space of minimal dimension.\footnote{As Wiman proved, this program cannot produce a solution in $\RD(n)$ variables for the general degree $n$ polynomial once $n$ is at least $8$.} For $n=5,6,7$, this program allowed Klein \cite{KleinIcos,Klein67,KleinLast} to reproduce the Bring/Hamilton bounds of $\RD(n)\le n-4$ with substantial simplifications in both the algebra of the formulas and the geometry of the normal forms involved. Klein also popularized the problem of finding simplest solutions of polynomials \cite[Second Part, Ch. II]{KleinEl}, was the first, or among the first, to explicitly consider the problem of lower bounds for $\RD$ \cite{KleinEv,KleinLast}, and worked, over a 50 year span, to anchor this problem firmly within the central mathematical concerns of his time (see also \cite{KleinLetter,Klein8}, and more generally \cite{KleinCW,Fr}).
	
	In his 1900 address at the Universal Exposition in Paris, Hilbert \cite[Problem 13]{Hi1} explicitly posed the problem of the non-existence of 2-variable formulas for the general degree 7 polynomial. Hilbert's address cements two decisive shifts for the problem: first, he explicitly called attention to the question of lower bounds on resolvent degree, made conjectures as to lower bounds, and advocated for this as the fundamental problem. Second, Hilbert built upon Enriques' 1897 ICM address \cite{En} by generalizing the problem to encompass formulas using analytic functions and even continuous ones; he then proved by a dimension count that the general three variable analytic function does not admit a formula in analytic functions of two or fewer variables. Hilbert returned to this problem at the end of his career in \cite{Hi}, where he explicitly conjectured that $\RD(6)=2$, $\RD(7)=3$, $\RD(8)=4$, and then sketched a beautiful geometric idea to lower $\RD(9)$ to at most 4. Shortly after, Wiman \cite{Wi} sketched another approach to showing $\RD(n)\le n-5$ for $n\ge 9$. As Dixmier observed \cite{Di}, there are gaps in both Hilbert and Wiman's proofs due to their assuming certain forms are sufficiently generic.
	
	Progress on the general problem of bounding $\RD(n)$ stalled after Hilbert. N. Chebotarev highlighted this and related questions in his 1932 ICM address \cite{TschebICM}, and in several papers in the 1930s and 1940s \cite{Tscheb1,Tscheb2,Tscheb3,Tscheb4}. However, by the mid-20th century, much of the 19th century work appears to have been forgotten. Segre \cite{Seg1}, building on Hilbert, provided the first rigorous proof that $\RD(n)\le n-5$ for $n\ge 9$, and proved that for $n\ge 157$, $\RD(n)\le n-6$ (n.b. Hamilton proved this for $n\ge 47$, while Sylvester proved it for $n\ge 44$). G. Chebotarev (N.'s son) worked to extend Wiman's methods to show $\RD(n)\le n-6$ for $n\ge 21$ \cite{Ch}, but his proof is incomplete.\footnote{As remarked above, Chebotarev's argument has the same gap that Dixmier \cite{Di} observed in Hilbert and Wiman, namely certain non-generic forms are assumed to be generic.} Segre ({\em loc. cit.}) conjectured that in the limit
	\begin{equation*}
	\lim_{n\to\infty} n-\RD(n)=\infty.
	\end{equation*}
	(i.e. precisely what Hamilton had showed over a century earlier).  Brauer \cite{Br1} and Segre each reproved this statement, but without giving effective bounds {\em \`a la} Hamilton  (see also \cite{Seg2}).
	
	In 1957, Arnold (then 19 years old) published a theorem which he described as a ``complete solution of the 13th problem of Hilbert'' \cite{Ar57}. A strengthening of Arnold's theorem, published soon after by Kolmogorov \cite{Ko}, states that for any continuous map $f\colon [0,1]^n\to \Rb$, there exist continuous functions $g_j,\varphi_{ij}\colon[0,1]\to\Rb$ such that
	\[
	f(t_1,\ldots,t_n)=\sum_{j=1}^{2n-1}g_j(\sum_{i=1}^k\varphi_{ij}(t_i))
	\]
	To apply this to Hilbert's problem, one must interpret Hilbert as having asked for an obstruction to expressing a single-valued branch of the general degree 7 polynomial as a composition of (single-valued) continuous functions of two or fewer variables. Following Arnold and Kolmogorov, work on the problem in all of its forms largely collapsed, this despite Arnold's efforts over a four decade span \cite{Ar70a,Ar70b,Ar70c,AS,ArLast} to call attention to and solve Hilbert's (still open!) thirteenth problem.\footnote{See also \cite[Problems  1972-27, 1976-34, 1979-10, 1980-10, 1985-18]{ArProb}}
	
	In 1971, Khovanskii \cite{Kh} showed that if one prohibited the use of division in a formula (i.e. one only allowed ``entire'' algebraic functions), then the quintic was not solvable in 1-variable functions.\footnote{A late paper of Abhyankar \cite{Ab}, apparently unaware of Khovanskii's result, proves the analogous theorem for the sextic.} Khovanskii emphasized that, more than anything else, this result shows the importance of division.\footnote{Lin has also extensively investigated what one can say for the general degree $n$ polynomial if one rules out division and possibly imposes further restrictions, see the papers \cite{Li1,Li2,Li3}.}
	
	In 1975, Brauer \cite{Br} gave the first rigorous definition of resolvent degree in the literature (followed soon after by Arnold and Shimura \cite{AS}). Brauer then proved that for $n\ge (r-1)!+1$, $\RD(n)\le n-r$.  This improves Sylvester and Hamilton's bounds for $r\ge 7$, and for such $r$ provides the best upper bound, of which we are aware, prior to this paper.
	
	While not stricly on $\RD(n)$, McMullen's work on iterative algorithms \cite{Mc} and his iterative solution of the quintic with Doyle \cite{DM} represent one of the major outgrowths of Arnold's efforts to obstruct solutions of polynomials. More recently, Buhler-Reichstein's formalization of the Kronecker-Klein resolvent problem \cite{BR1,BR2}, and the broader theory of essential dimension that this given rise to, provides the closest contemporary body of work (see e.g. \cite{ReICM}, \cite{Me}, \cite{FKW1}).
	
	The interested reader can find other discussions of the history of the problem in Sylvester and Hammond \cite{SH1}, in Klein \cite{KleinHist}, or more recently in the surveys by Dixmier \cite{Di} and Vitushkin \cite{Vi}. For a contemporary treatment of resolvent degree and its relation to classical problems see also \cite{FW,FKW2}.

\bigskip{\noindent
Dept. of Mathematics, University of California, Irvine\\
E-mail: wolfson@uci.edu

\end{document}